\newtheorem{thm}{Theorem}[section]
\newtheorem{lem}{Lemma}[section]
\newtheorem{rmk}{Remark}[section]
\newtheorem{cor}{Corollary}[section]
\newtheorem{prop}{Proposition}[section]
\numberwithin{equation}{section}
\newcommand{\beq}{\begin{equation}}
\newcommand{\eeq}{\end{equation}}
\newcommand{\ben}{\begin{eqnarray}}
\newcommand{\een}{\end{eqnarray}}
\newcommand{\beno}{\begin{eqnarray*}}
\newcommand{\eeno}{\end{eqnarray*}}
\let\f=\frac
\newcommand{\be}{\begin{equation} \label}
	\newcommand{\ee}{\end{equation}}
\newcommand{\bea}{\begin{eqnarray}\label}
	\newcommand{\eea}{\end{eqnarray}}
\newcommand{\bas}{\begin{eqnarray*}}
	\newcommand{\eas}{\end{eqnarray*}}
\newcommand{\bit}{\begin{itemize}}
	\newcommand{\eit}{\end{itemize}}
\newcommand{\N}{{\mathbb N}}
\newcommand{\R}{{\mathbb R}}
\newcommand{\pa}{\partial}
\newcommand{\var}{\varepsilon}
\newcommand{\ba}{\begin{aligned}}
\newcommand{\ea}{\end{aligned}}
 \def\na{\nabla}
 \newcommand{\lr}[1]{\langle #1 \rangle}
\def\eqdefa{\buildrel\hbox{\footnotesize def}\over =}
\begin{document}

\title[A new monotonicity formula for Landau equation]{A new monotonicity formula for the spatially homogeneous Landau equation with Coulomb potential and its applications}

\author{Laurent Desvillettes}
\address[Laurent Desvillettes]{ Universit\'{e} de Paris and Sorbonne Universit\'e, CNRS, Institut de Math\'{e}matiques de Jussieu-Paris Rive Gauche,  F-75013, Paris, France } 
\email{desvillettes@math.univ-paris-diderot.fr}

\author{Ling-Bing He}
\address[Ling-Bing He]{Department of Mathematical Sciences, Tsinghua University, Beijng, 100084, P.R. China.}
\email{hlb@tsinghua.edu.cn}

\author{Jin-Cheng Jiang}
\address[Jin-Cheng Jiang]{Department of Mathematics, National Tsing Hua University, Hsinchu, Taiwan 30013, R.O.C}
\email{jcjiang@math.nthu.edu.tw}

\begin{abstract}  
We describe a time-dependent  functional  involving the relative entropy 
and the $\dot{H}^1$ seminorm, which decreases along solutions to the spatially homogeneous Landau equation with Coulomb potential.  
The study of this monotone functionial sheds light on the competition between the dissipation and the nonlinearity for this equation. 
It enables to obtain new results concerning  regularity/blowup issues for the Landau equation 
with Coulomb potential. 
\end{abstract}
	
\keywords{Landau equation, Landau operator, degenerate diffusion, Coulomb interaction}

\subjclass[2010]{35B65, 35K67, 45G05, 76P05, 82C40, 82D10}

\maketitle

\section{Introduction}

We  consider   the spatially homogeneous Landau equation with Coulomb potential
\begin{equation}\label{landau}
\partial_t f= Q(f,f)(v),
\end{equation}
complemented with initial data $f_0=f_0(v) \ge 0$. Here $f := f(t,v) \ge 0$ stands for the distribution of particles that at time $t \in \R_+$ possess the velocity $v \in \R^3$. The Landau operator (with Coulomb potential) $Q$ is a bilinear operator acting only on the velocity variable $v$. It writes
\begin{equation} \label{12d}
Q(g,h) =\nabla \cdot {\Big (}[ a*g ]\;\nabla h- [ a*\nabla g ] \; h{\Big )},
\end{equation}
with 
\begin{equation} \label{13d}
a(z)=|z|^{-1} \, \left(Id -\frac{z\otimes z}{|z|^2} \right).
\end{equation}
This equation,  first obtained by Landau in 1936,
 is  used to describe the evolution in time of a (spatially homogeneous) plasma due to collisions between charged particles under the Coulomb potential. 

Introducing the quantity
\ben\label{Defbc}
b_i(z): = \sum_{j=1}^3 \partial_j a_{ij}(z) = -2 \,z_i\, |z|^{-3}, \quad 
\een
 the Landau operator with Coulomb potential can also be written as
\ben\label{Qbis}
\ba
Q(f,f) 
&= \sum_{i=1}^3 \partial_i \bigg( \sum_{j=1}^3(a_{ij}*f)\, \partial_j f - (b_i *f)\, f  \bigg)   \\
&= \sum_{i=1}^3\sum_{j=1}^3(a_{ij}*f) \,\partial_{ij}f + 8\pi \,f^2 ,
\ea
\een
where we used the identity $\sum_{i=1}^3\pa_i b_i(z)= - 8\pi\delta_0(z)$.
\subsection{Basic properties of the equation and notations} 

The weak 
 formulation of the Landau operator $Q$, for a suitable test function $\varphi$, 
is written in the following way:
\ben\label{Qweak1}
&& \int_{\R^3} Q(f,f) (v)\, \varphi(v) \, dv= - \frac12 \, \sum_{i=1}^3\sum_{j=1}^3 \iint_{\R^3 \times \R^3} a_{ij}(v-v_{*}) \\
&&\quad\times
 \left\{ \frac{\partial_j f}{f}(v) - \frac{\partial_j f}{f}(v_{*})  \right\} \left\{ \partial_i \varphi(v) - \partial_i \varphi(v_{*})  \right\}   
 f(v) f(v_{*}) \, dv \, dv_{*}. \nonumber
\een

From formula~\eqref{Qweak1}, we can obtain the fundamental properties of the Landau operator $Q$. 
The operator indeed conserves (at the formal level) mass, momentum and energy, more precisely
\ben\label{cons}
\int_{\R^3} Q(f,f)(v) \, \varphi(v) \, dv = 0 \quad\text{for}\quad \varphi(v) = 1, v_i, \frac{|v|^2}2,\;i=1,2,3.
\een

We also deduce from formula (\ref{Qweak1}) the entropy structure of the operator (still at the formal level) by taking the test function $\varphi(v) = \log f(v)$, 
that is 
\ben\label{D(f)}
D(f)  := - \int_{\R^3} Q(f,f)(v)\, \log f(v) \, dv
\een
$$ =  \frac12 \sum_{i=1}^3\sum_{j=1}^3\iint_{\R^3 \times \R^3} a_{ij}(v-v_{*}) \left\{ \frac{\partial_i f}{f}(v) - \frac{\partial_i f}{f}(v_{*})  \right\} \left\{ \frac{\partial_j f}{f}(v) - \frac{\partial_j f}{f}(v_{*})  \right\}   \, f(v)\, f(v_{*}) \, dv \, dv_{*}.  $$

Note that $D(f) \ge 0$ since the matrix $a$ is (semi-definite) positive.  Note also that for any $f$ such that $D(f)=0$,  it 
 can be shown (cf. \cite{D} and \cite{DX} for a rigorous statement and proof) that $f$ is a Maxwellian distribution, that is $f = \mu_{\rho,u,T}$, with
\ben\label{MaxGen}
\mu_{\rho,u,T}(v) = \frac{\rho}{(2\pi T)^{3/2}} \, e^{-\frac{|v-u|^2}{2T}},
\een
where $\rho\ge 0$ is the density, $u \in \R^3$ is the mean velocity and $T>0$ is the temperature of the plasma. 
They are defined by
\begin{equation} \label{cco}
\rho = \int_{\R^3} f(v) \, dv, \quad 
u = \frac{1}{\rho} \int_{\R^3} v \,f(v) \, dv, \quad
T = \frac{1}{3 \rho} \int_{\R^3} |v-u|^2\, f(v) \, dv.
\end{equation}

Thanks to the conservation of mass, momentum and energy, we have  (when $f: = f(t,v)$ is a solution of eq. (\ref{landau}) -- (\ref{13d}) and $\rho,u,T$ are defined above, at the formal level),
\ben\label{conssh}
\forall t \ge 0, \qquad \rho(t)  = \rho (0), \quad u(t)=u(0),
\quad T(t)=T(0),
\een
which implies that the parameters $\rho,u, T$ are constant (along solutions of eq. (\ref{landau}) -- (\ref{13d})).
\medskip

Denoting  (when $f: = f(t,v)$ is a solution of eq. (\ref{landau}) -- (\ref{13d}))  by
 \ben\label{DefHt}
H(t):=H(f|\mu_{\rho,u,T})(t) := \int_{\R^3} \bigg(f(t,v) \log \bigg( \f{f(t,v)}{\mu_{\rho,u,T}} \bigg) -f(t,v) +\mu_{\rho,u,T}\bigg) \, dv,\een 
the relative entropy with respect to $\mu_{\rho,u,T}$ (defined by \eqref{MaxGen}, \eqref{cco}), we see that 
 (still at the formal level),
\ben\label{entropyeq}
\frac{d}{dt}H(t) = -D(f(t,\cdot)) \le 0.
\een
Note that in the above definition, $H(t)$ differs from the usual (non relative) entropy 
$\int f(t,v) \log f(t,v)dv$ only by a constant, thanks to identities (\ref{conssh}).
\medskip

Throughout this paper, we shall assume that $f_0 \ge 0$ and $f_0 \in L^1_2 \cap L \log L (\R^3)$. Furthermore,  without loss of generality, we shall also assume that
 $f_0$ satisfies the normalization identities 
\ben\label{f0}
\int_{\R^3} f_0(v)\, dv = 1, \quad \int_{\R^3} f_0(v)\, v\, dv =0, 
\quad \int_{\R^3} f_0(v)\, |v|^2 \, dv =3,
\een
which can be rewritten $\rho(0)=1$, $u(0)=0$, $T(0)=1$. Finally, we denote by 
\ben\label{Defmu}\mu(v):= (2\pi)^{-3/2} e^{-|v|^2/2}, \een the Maxwellian distribution (centred reduced Gaussian) with same mass, momentum and energy as $f_0$ satisfying (\ref{f0}).
 \medskip

Next we introduce some function spaces which will be used throughout the paper:

  \noindent $\bullet$ Let $\lr{v}:=(1+|v|^2)^{1/2}$ denote the Japanese bracket. For any $p \in [1, +\infty[$, $l \in \R$, the $L^p_{l}$ norm is defined by
$$ \| f \|_{L^p_{l}}^p := \int_{\R^3} |f(v)|^p \, \lr{v}^{pl}\, dv.$$

   \noindent $\bullet$ The following quantity, for functions of $L\log L$,  is written as if it were a norm, and defined by   
   $$\|f\|_{L\log L}: =\int_{\R^3}   |f| \log (1+ |f|) \, dv.$$ 

 \noindent  $\bullet$ For any $p\in(1,\infty), q \in [1, +\infty]$, the standard Lorentz space $L^{p,q}$ is defined by the norm
 \begin{equation}\label{Lorentz}
 \|f\|_{L^{p,q}}:= 
 \left\{
 \begin{array}{l}
 \bigg(\int_0^{\infty} \big(t^{1/p}f^{**}(t) \big)^{q}\frac{dt}{t}\bigg)^{1/q},\;1\leq q<\infty\\
 \sup\limits_{t>0}\; t^{1/p}f^{**}(t), \;q=\infty,
 \end{array}
 \right.
 \end{equation}
where $f^{**}(t):=t^{-1}\int_0^t f^*(s)ds$, and  $f^*$ is the decreasing rearrangement  of $f$. We also denote when
$l \in \R$
 the weighted Lorentz norm by 
\beno   \|f\|_{L^{p,q}_{l}}:=\|f(\cdot)\lr{\cdot}^{l}\|_{L^{p,q}}.\eeno
More details on Lorentz spaces including the case when $p=1,\;p=\infty$ can be found in the Appendix.

 \noindent $\bullet$ The homogeneous Sobolev norm $\dot{H}^m$ with $m\in\R$ is defined by
 $$\|f\|_{\dot{H}^m}^2:=\int_{ \R^3} |\xi|^{2m}|\hat{f}(\xi)|^2d\xi , $$
while the weighted inhomogeneous Sobolev norm $H^m_l$ with $m\in\N,l\in\R$ is defined by
\beno \|f\|_{H^m_l}^2:=\sum_{|\alpha|\le m}\int_{ \R^3}   |\pa^\alpha (f\lr{v}^l)|^2dv.\eeno

\subsection{Short review on the Landau equation with Coulomb potential}

 We briefly  review the works on the Landau equation with Coulomb potential \eqref{landau}  -- \eqref{13d}.

\smallskip

\noindent$\bullet$ {\bf Existence and uniqueness of solutions:} In \cite{Vi}, Villani~ proved the global existence of the so-called $H$-solutions 
for equation~\eqref{landau} -- (\ref{13d}) when the
 initial data have finite mass, energy and entropy. The key part of the proof lies in the use of 
the entropy dissipation $D(f)$, rewritten as
\ben D(f(t))=2\int\int \frac{1}{|{ v-v}_*|}\big|\Pi({ v-v}_*)\nabla_{{ v-v}_*}
\sqrt{f(t,{ v}) f(t,{ v}_*)}
\big|^2
{ d}{ v}{d}{ v}_*. \een
Here $\Pi(z)\nabla:=(Id -\frac{z}{|z|}\otimes\frac{z}{|z|})\nabla$, is called the {\it weak projection gradient} (see \cite{HLP} and \cite{Vi}). 
In all generality (when an estimate for $\na_v f$ is not available),  $\Pi({ v-v}_*)\nabla_{{ v-v}_*}$ is not equal to $\Pi({ v-v}_*)\nabla_v-\Pi({ v-v}_*)\nabla_{v_*}$.
  This means that the construction of the approximated solutions to an $H$-solution plays a significant role.
 When the solutions are well-constructed (that is, using a suitable approximation process), we have
 \ben\label{nablav-v*}\Pi({ v-v}_*)\nabla_{{ v-v}_*}=\Pi({ v-v}_*)\nabla_v-\Pi({ v-v}_*)\nabla_{v_*}.\een
We refer readers to \cite{HLP} for more details. When \eqref{nablav-v*} holds, we can use the estimate for the entropy dissipation $D(f)$ in \cite{D} to show that an $H$-solution is  a weak solution of the equation. More precisely, there is an explicitly computable constant $C_0 = C_0(\bar{H}) >0$ such that, for all (normalized) $f\ge 0$ satisfying $H(f) \le \bar{H}$, the following inequality holds:
\ben\label{L3<D}
\| f \|_{L^3_{-3}} \le C_0 \,( 1 + D(f) ).
\een
Therefore,  we know that such an $H$-solution of equation \eqref{landau} -- (\ref{13d}) lies in $ L^1_{loc}([0,\infty); L^3_{-3}(\R^3))$, and this estimate is sufficient to show that it is indeed a weak solution in the usual sense.
\medskip
 
  Fournier \cite{Fournier} showed that uniqueness holds for the solutions of~\eqref{landau} -- (\ref{13d}) 
lying in the class $L^\infty_{loc}([0,\infty); L^1_2(\R^3)) \cap L^1_{loc}([0,\infty);L^\infty(\R^3))$, and this result implies a local well-posedness result assuming further that the initial data lie in $L^\infty (\R^3)$, thanks to the local existence result of Arsenev-Peskov~\cite{Arsenev} for such initial data. We also refer to~\cite{CG20} for uniqueness of higher integrable solutions, and to \cite{KrSt} for the study of an equation sharing significant features 
with eq.~\eqref{landau} -- (\ref{13d}). 

\medskip
  In the spatially inhomogeneous context, we quote \cite{Vi2} for the existence of
renormalized solutions and~\cite{Guo}  and  \cite{heyang}  for the global well-posedness near Maxwellian and the local well-posedness 
 in weighted Sobolev spaces. We finally refer to \cite{CM} for a general perturbation result, and to \cite{HeSn1}, \cite{HeSn2} for conditional
regularity results.
\medskip

\noindent$\bullet$ {\bf Long time behavior:} In a perturbative and spatially inhomogeneous framework, Guo and Strain~\cite{GS} (see also \cite{CM}) proved for solutions of (\ref{landau}) -- (\ref{13d}) the stretched exponential decay to equilibrium in a high-order Sobolev space with fast decay in the velocity variable. 
For (uniformly w.r.t time) {\it{a priori}} smooth solutions with large initial data,  LD and Villani~\cite{DV-boltzmann} proved 
the algebraic convergence to equilibrium. 
\par 
In the homogeneous setting, Carrapatoso, LD and LH  proved the following result which plays 
an essential role in the present paper:

\begin{thm}\label{thm:decay} {\rm(}{\rm Cf.} Theorem 2 and Lemma 8 of~\cite{CDH}{\rm)}
Let $f_0 \in L \log L (\R^3)\cap  L^1_\ell(\R^3)$ with  $ \ell > \frac{19}{2}$ satisfy the normalization \eqref{f0}, 
and consider a (well-constructed) weak (or H-) solution $f$ to eq. ~\eqref{landau} -- \eqref{13d}  with initial datum $f_0$.   
Then for any strictly positive $ \beta < \frac{2\ell^2-25\ell+57}{9(\ell - 2)} $, there exists some computable constant $C_{\beta}>0$ (depending only on $\beta$,  and $K>0$ such that
$  \|f_0\|_{L^1_\ell}  + \|f_0 \|_{L\,\log L} \le K$), such that   the relative entropy satisfies
\ben\label{decayH}
 \forall\, t \ge 0, \qquad H(t) \le C_\beta \,(1+t)^{ - \beta}.
\een
Moreover, for all $\ell >2$, there exists $C_{\ell} >0$ (which only
 depends on $\ell$ and $K$ such that $||f_0||_{L^1_{\ell}(\R^3)} + \|f_0 \|_{L\, \log L} \le K$ ),  such that 
\ben\label{momentpropa} \forall t >0, \qquad ||f( t, \cdot)||_{L^1_{\ell}(\R^3)}  \le C_{\ell} \, (1+t). \een
\end{thm}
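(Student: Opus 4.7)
The plan is to prove the two bounds in sequence: first the moment growth \eqref{momentpropa} by a direct Gr\"onwall argument on the weak formulation, then \eqref{decayH} by combining that moment bound with the entropy-dissipation estimate \eqref{L3<D} to turn the decay identity \eqref{entropyeq} into a quantitative polynomial rate.

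For \eqref{momentpropa}, I would test the equation against $\varphi(v)=\langle v\rangle^{2\ell}$. Using \eqref{Qweak1}, the right-hand side is a symmetric double integral of $f(v)f(v_*)$ against $\sum_{i,j}a_{ij}(v-v_*)\,\partial_{ij}\langle v\rangle^{2\ell}$, which one checks pointwise is of order $\langle v_*\rangle^{2}\langle v\rangle^{2\ell-2}/|v-v_*|$ after using the symmetry in $(v,v_*)$. The singular factor $|v-v_*|^{-1}$ is controlled under the sole hypotheses $f\in L^1_2\cap L\log L$ by splitting the $v_*$-integral over $\{|v-v_*|\le 1\}$ and $\{|v-v_*|>1\}$, using on the inner part the uniform bound $\int_{|v-v_*|\le 1}|v-v_*|^{-1}f(v_*)\,dv_*\lesssim 1+\|f\|_{L\log L}$. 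This produces $\frac{d}{dt}\|f(t)\|_{L^1_\ell}\le C\bigl(1+\|f(t)\|_{L^1_\ell}\bigr)$, whence the linear-in-time growth (after Gr\"onwall, but without exponential blow-up since the right-hand side is linear in $\|f\|_{L^1_\ell}$ with a constant independent of $t$).

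For \eqref{decayH}, the strategy is to manufacture a reverse entropy/entropy-dissipation inequality of the form
\[
H(t)\le C\,\|f(t)\|_{L^1_\ell}^{\theta(\ell)}\,\bigl(1+D(f(t,\cdot))\bigr)^{-\sigma(\ell)}
\]
by interpolating between $L^1_\ell$, $L\log L$ and the coercive estimate \eqref{L3<D}, combined with a Csisz\'ar--Kullback--Pinsker lower bound $\|f-\mu\|_{L^1}^2\lesssim H(t)$. Feeding this into \eqref{entropyeq} and using the already-established \eqref{momentpropa} leads to the differential inequality
\[
H'(t)\le -c\,(1+t)^{-\theta/\sigma}\,H(t)^{1/\sigma},
\]
whose integration yields a polynomial decay $H(t)\lesssim (1+t)^{-\beta}$ with $\beta=(1-\theta/\sigma)/(1/\sigma-1)$. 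The main obstacle is the sharp bookkeeping that forces $\beta$ to reduce to the stated rational expression $(2\ell^2-25\ell+57)/(9(\ell-2))$: each interpolation step costs an explicit power of $\|f(t)\|_{L^1_\ell}$, and balancing those powers against the coercivity exponent coming from \eqref{L3<D} is delicate. The vanishing of the numerator at $\ell=19/2$ marks precisely the threshold below which the interpolation chain no longer closes with a positive decay exponent, which explains the restriction $\ell>19/2$.
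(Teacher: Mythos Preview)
This theorem is not proved in the paper at all: it is quoted verbatim from Theorem~2 and Lemma~8 of \cite{CDH} and used as a black box throughout. So there is no ``paper's own proof'' to compare against; nonetheless, your sketch has real gaps relative to the argument in \cite{CDH}.

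For \eqref{momentpropa}, the step you call ``Gr\"onwall, but without exponential blow-up'' is incorrect as written: the inequality $\frac{d}{dt}\|f(t)\|_{L^1_\ell}\le C\bigl(1+\|f(t)\|_{L^1_\ell}\bigr)$ yields $\|f(t)\|_{L^1_\ell}\le (1+\|f_0\|_{L^1_\ell})e^{Ct}-1$, which is exponential, not linear. To obtain the stated linear growth one must do better than simply placing the top moment on the right-hand side; in \cite{CDH} this relies on structural cancellations in the Landau operator together with the $L^1_tL^3_{-3}$ bound \eqref{L3<D} coming from the entropy dissipation, not merely on $f\in L\log L$. Relatedly, your claimed bound $\int_{|v-v_*|\le1}|v-v_*|^{-1}f(v_*)\,dv_*\lesssim 1+\|f\|_{L\log L}$ is not true in general: $|z|^{-1}\mathbf{1}_{|z|\le1}$ lies in $L^p$ only for $p<3$, and $L\log L$ does not embed into the required $L^{p'}$ with $p'>3/2$. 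Some higher integrability of $f$ (again supplied by \eqref{L3<D}) is needed to control this convolution.

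For \eqref{decayH}, the overall scheme---entropy/entropy-dissipation inequality plus moment growth fed into $H'=-D(f)$---is indeed the one used in \cite{CDH}, but the functional inequality you write, $H\le C\|f\|_{L^1_\ell}^{\theta}(1+D)^{-\sigma}$, is not the correct form. What is actually established there is a coercivity estimate of the type $D(f)\ge c\,\|f\|_{L^1_\ell}^{-a}H(f)^{1+b}$ (with explicit $a,b>0$ depending on $\ell$), which together with the linear moment growth gives $H'\le -c(1+t)^{-a}H^{1+b}$ and hence $H(t)\lesssim (1+t)^{-(1-a)/b}$. The specific rational exponent $\tfrac{2\ell^2-25\ell+57}{9(\ell-2)}$ and the threshold $\ell>19/2$ arise from tracking $a$ and $b$ through that chain, not from an interpolation with \eqref{L3<D} in the direction you indicate.
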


\noindent$\bullet$ {\bf Functional estimates:}
 In \cite{D}, it is shown that (for normalized $f \ge 0$) the following estimate holds,
\ben\label{Dfdissipation11} D(f)+ 1\ge C_{D,1} \, \|\sqrt{f}\|_{H^1_{-\f32}}^2, \een
where $C_{D,1}>0$ depends only on an upper bound of $H(f|\mu)$.
\medskip

Using  the precised Sobolev embedding inequality $\|f\|_{L^{6,2}}\le C\,\|\na f\|_{L^2}$ (see~\cite{AdFu}) and the O'Neil inequality in Lorentz spaces (see Proposition \ref{oneil} in the Appendix), we end up with the following inequality (holding for normalized $f \ge 0$):
\ben\label{Dfdissipation} D(f)+ 1\ge C_{D,2}\, \|f\|_{L^{3,1}_{-3}}, \een 
where $C_{D,2}>0$ depends only on an upper bound of $H(f|\mu)$.
\par 
We refer to \cite{D}, \cite{DX} and \cite{CDH} for variants of inequality \eqref{Dfdissipation}.


\medskip

\noindent$\bullet$ {\bf Partial regularity issue:} Very recently Golse, Gualdani, Imbert  and Vasseur~\cite{GGIV}  proved that 
the set of singular times for (suitable) weak solutions of the
spatially homogeneous Landau equation with Coulomb potential  has Hausdorff dimension at most $1/2$ if the initial data possesses all polynomial moments. 
The key ingredient of the proof lies in the application of De Giorgi's method to a scaled suitable solution.
 They also observed that the solution to Landau equation with Coulomb interaction enjoys a scaling property which is similar to that of the 3D incompressible Navier-Stokes equation.  This explains the link between the bound on the Hausdorff dimension of the set of singular times in both equations. We also cite the papers \cite{GG1} and \cite{GG2} where Gualdani and Guillen provide estimates which are useful to understand 
the issues of regularity/appearance of blowup and the role played by the various terms in the  Landau equation with Coulomb potential.

\subsection{Main result}
 A very challenging problem for the (spatially homogeneous) 
Landau equation with Coulomb potential (\ref{landau}) -- (\ref{13d}) is to answer whether  
 the smoothness is propagated for all positive times, or if some blowup may occur after a finite time. If such a blowup appears, a further challenging issue is to understand what really happens at the blowup time
(Cf. \$1.3 (2) in Chapter 5 of Villani's monograph~\cite{Villani-BoltzmannBook}). 
The main result of this paper provides new partial answers to the first question, while another result of this paper deals with the second question. 
In particular, our results shed some new light on the competition between the dissipation and the nonlinearity (see more details at the end of this section) for Landau equation with Coulomb potential.




Our main result is concerned with the new monotonicity formula for equation  (\ref{landau}) -- (\ref{13d}) announced in the title, and its byproducts:

\begin{thm}\label{maintheorem1} Let $f_0 \in  L \log L (\R^3)\cap  L^1_{55}(\R^3) \cap \dot{H}^1(\R^3)$ be a nonnegative initial datum  satisfying the normalization \eqref{f0}.\par 
 Then  there exist (explicitly computable) constants $B^*, C_6>0$, $k_2> 7/2$, $k>0$ (depending only on $K$ satisfying
 $ \|f_0\|_{L^1_{55}(\R^3)} +  \|f_0\|_{L\,\log L(\R^3)}  \le K$) such that the three following statements hold:
\medskip

\noindent {\rm (i)(Monotonicity of a functional).}   We consider $T>0$ and denote by $f:= f(t,v)$ a smooth and quickly decaying
 ($C^2_t(\mathcal{S})$)
 nonnegative solution on the interval $[0,T]$ to eq. (\ref{landau}) -- (\ref{13d}) with initial 
datum $f_0$.  We define 
$h:=f-\mu$, where $\mu$ is given by eq. (\ref{Defmu}) (recall also  that $H(t)$ is the relative entropy given by~\eqref{DefHt}).
\par 
 Then
 the following {\it a priori} estimate (that we call monotonicity property) holds for $t \in [0, T]$:
\ben\label{MonoFom}  \f{d}{dt} \bigg[H(t)-\f52\bigg(\|h(t)\|_{\dot{H}^1}^2+ B^*\, (1+t)^{- k_2 +1}\bigg)^{-\f25}   \bigg] + \,C_6\, (1+t)^{k}\le 0.\een  
\medskip

\noindent {\rm (ii)(Global regularity for initial data below threshold).} If moreover $H(0) \, \big(\|h(0)\|_{\dot{H}^1}^2+ B^*\, \big)^{\f25}\le \f52$, then eq. (\ref{landau}) -- (\ref{13d}) 
admits a (unique)  global and strong (that is, lying in $L^{\infty}(\R_+; H^1(\R^3))$) nonnegative solution satisfying that
\ben\label{smoesti}
\forall t> 0, \qquad  \|h(t)\|_{\dot{H}^1} \, \bigg(H(t)+\f{C_6}{k+1}\, \bigg[(1+t)^{1+k}-1 \bigg]\bigg)^{\f54} \le (\f{2}{5})^{-\f54},
\een 
where we used the same notations for $h, \mu$ and $H$ as in statement (i).
\medskip

\noindent {\rm (iii)(No blowup after a finite time).} If finally  $H(0) \, \big(\|h(0)\|_{\dot{H}^1}^2+ B^*\, \big)^{\f25} > \f52$, we denote
\ben\label{DEFT} T^* := \bigg(\f{1+k}{C_6} \bigg[  H(0)-\f52\, [\|h(0)\|_{\dot{H}^1}^2 + B^*]^{-2/5} \bigg] +1\bigg)^{\f1{k+1}}-1 .\een
Then
one can construct a global  weak (or $H$-) nonnegative solution of eq. (\ref{landau}) -- (\ref{13d}), such that for $t>T^*$, it becomes  global and strong (that is, it lies in $L^{\infty}((T^*,\infty); H^1(\R^3))$), and satisfies the estimates
\ben 
&& \label{nnn}
 H(t) \, \bigg[\|h(t)\|_{\dot{H}^1}^2 + B^*\, (1+t)^{1-k_2} \bigg]^{-2/5}  \le \f52;\\
&&\label{noblowupht}\|h(t)\|_{\dot{H}^1} \, \bigg( \f{C_6}{k+1}\, \bigg[(1+t)^{1+k}-(1+T^*)^{1+k} \bigg]\bigg)^{\f54} \le  (\f{2}{5})^{-\f54},
 \een 
where we used the same notations for $h, \mu$ and $H$ as in statement (i).
\end{thm}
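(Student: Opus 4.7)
The plan is to establish a weighted differential inequality for $\|h(t)\|_{\dot H^1}^2$ along smooth solutions, combine it with the entropy identity $\tfrac{d}{dt}H(t)=-D(f(t))$ to build the monotone functional of (i), and then deduce (ii) and (iii) as corollaries of the integrated inequality, together with the functional estimates \eqref{L3<D}--\eqref{Dfdissipation} and the long-time bounds of Theorem~\ref{thm:decay}.

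For Step 1 I would differentiate $\|h\|_{\dot H^1}^2$ using $\partial_t h = Q(f,f)$ (since $Q(\mu,\mu)=0$), integrate by parts and split $-\int \Delta h\cdot Q(f,f)\,dv$ into a coercive piece coming from the elliptic operator $(a_{ij}*f)\,\partial_{ij}f$ and nonlinear pieces coming from $8\pi f^2$ and from commutators of $\nabla$ with the diffusion matrix. Distributing $\|h\|_{\dot H^1}$ between $L^2$ and $\dot H^2$ via Gagliardo--Nirenberg, invoking Sobolev--Lorentz embeddings, and converting $\|f\|_{L^{3,1}_{-3}}$ into $1+D(f)$ through \eqref{Dfdissipation}, one targets a schematic bound
\begin{equation*}
\frac{d}{dt}\|h\|_{\dot H^1}^2 \;\le\; \varepsilon\,(1+D(f))\,\|h\|_{\dot H^1}^{14/5} + \Psi(t),
\end{equation*}
with $\Psi(t)$ a slowly-growing remainder controlled by the moment propagation \eqref{momentpropa}. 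The exponent $14/5$ is picked to match the exponent $-2/5$ in the monotone functional, since $\|h\|^{14/5}=(\|h\|^{2})^{7/5}\le G(t)^{7/5}$ below.

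For Step 2 I set $G(t):=\|h(t)\|_{\dot H^1}^2 + B^*(1+t)^{1-k_2}$ and $\mathcal{F}(t):=H(t)-\tfrac{5}{2}G(t)^{-2/5}$, so that
\begin{equation*}
\frac{d}{dt}\mathcal{F} = -D(f) + G^{-7/5}\Bigl(\tfrac{d}{dt}\|h\|_{\dot H^1}^2 - B^*(k_2-1)(1+t)^{-k_2}\Bigr).
\end{equation*}
Using Step 1, the term $\varepsilon(1+D)G^{-7/5}\|h\|^{14/5}\le \varepsilon(1+D)$ is absorbed into $-D(f)$ for $\varepsilon$ small; choosing $k_2>7/2$ and $B^*$ large, the negative contribution $-G^{-7/5}B^*(k_2-1)(1+t)^{-k_2}$ dominates $G^{-7/5}\Psi(t)$ and still leaves a slack of order $-C_6(1+t)^k$, yielding \eqref{MonoFom}. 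For (ii), the hypothesis $H(0)(\|h(0)\|_{\dot H^1}^2+B^*)^{2/5}\le 5/2$ reads exactly $\mathcal{F}(0)\le 0$; integrating \eqref{MonoFom} and using $H(t)\ge 0$ gives $\tfrac{5}{2}G(t)^{-2/5}\ge H(t)+\tfrac{C_6}{k+1}[(1+t)^{k+1}-1]$, which rearranges to \eqref{smoesti}. The resulting a priori $\dot H^1$ bound, combined with Arsenev--Peskov local smooth existence and Fournier's uniqueness, promotes the local smooth solution to a global strong one. For (iii), I would build a global $H$-solution via a smooth approximation scheme along which \eqref{MonoFom} holds in an approximate sense, then pass to the limit: the integrated inequality $\mathcal{F}(0)-\tfrac{C_6}{k+1}[(1+t)^{k+1}-1]$ vanishes exactly at $T^*$ in \eqref{DEFT}, after which $\mathcal{F}(t)\le 0$, and the rearrangement of (ii) gives \eqref{nnn}--\eqref{noblowupht}, placing the solution in the regime of (ii) from $T^*$ onwards.

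The main obstacle is Step 1: producing the sharp exponent $14/5$ so that everything fits after dividing by $G^{7/5}$. This is where the thresholds $k_2>7/2$ and the moment index $55$ get fixed, and it requires a tight combination of Landau coercivity, the gain \eqref{Dfdissipation} of $\|f\|_{L^{3,1}_{-3}}$ from $D(f)$, Gagliardo--Nirenberg interpolation, and time-uniform moment propagation from Theorem~\ref{thm:decay}, so that after dividing by $G^{7/5}$ nothing larger than $D(f)$ survives apart from a term swallowed by the slack $-C_6(1+t)^k$.
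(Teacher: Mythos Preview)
Your overall architecture is right — differentiate $\|\nabla h\|_{L^2}^2$, extract coercivity, estimate the nonlinearity via $D(f)$ and Lorentz embeddings, then combine with $H'=-D$ — and this matches the paper. But Step 1 as you wrote it has a genuine gap that breaks Step 2.

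Your schematic inequality $\tfrac{d}{dt}\|h\|_{\dot H^1}^2 \le \varepsilon(1+D)\|h\|_{\dot H^1}^{14/5} + \Psi(t)$ throws away the coercive term. The paper keeps it: from $I_{1,1}\ge C(K)\|\nabla^2 h\|_{L^2_{-3/2}}^2$ together with the interpolation $\|\nabla^2 h\|_{L^2_{-3/2}}^2 \ge C\,\|h\|_{L^1_{15/4}}^{-4/5}\|\nabla h\|_{L^2}^{14/5} - C^*\|h\|_{L^1}^2$, and the \emph{decay} $\|h(t)\|_{L^1_{15/4}}\le C(1+t)^{-q}$ (interpolating Csisz\'ar--Kullback--Pinsker decay of $\|h\|_{L^1}$ against moment growth, not just moment propagation), one gets an extra \emph{negative} term $-C_1(1+t)^{k_1}\|\nabla h\|_{L^2}^{14/5}$ on the right. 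Likewise the remainder is not ``slowly growing'': it is $\|h\|_{L^1_{99/4}}^2\le C(1+t)^{-k_2}$, again by interpolating entropy decay with moments; this is where $\ell=55$ and $k_2>7/2$ are pinned down.

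This matters for your Step 2. You claim the slack $-C_6(1+t)^k$ comes from $-G^{-7/5}B^*(k_2-1)(1+t)^{-k_2}$. But $G\ge \|h\|_{\dot H^1}^2$, so when $\|h\|_{\dot H^1}$ is large (which is exactly the regime of interest) this term is $O(\|h\|_{\dot H^1}^{-14/5}(1+t)^{-k_2})$, hence arbitrarily small; it cannot furnish a uniform $-C_6(1+t)^k$ with $k>0$. In the paper, the two negative contributions $C_1(1+t)^{k_1}(X^2)^{7/5}$ and $c_1(1+t)^{(2k_2-7)/5}Y^{7/5}$ (with $Y=B^*(1+t)^{1-k_2}$) together dominate $C_6(1+t)^k(X^2+Y)^{7/5}$ with $k=\min(k_1,\tfrac{2k_2-7}{5})$: the $X$--term handles the regime $X^2\gtrsim Y$, the $Y$--term the regime $X^2\lesssim Y$. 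Your scheme only has the second mechanism. Once you restore the coercive $-(1+t)^{k_1}\|\nabla h\|^{14/5}$ in Step 1 and note that $\Psi(t)$ actually decays, your Step 2 goes through exactly as in the paper, and (ii)--(iii) follow by the integration argument you sketched together with the approximation by the regularized kernel $a^\epsilon$.
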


Using variants of the estimates above, it is possible to get more standard results of local (in time) well-posedness for large initial data  (in  $\dot{H}^1$ norm),
 and global (in time) well-posedness for small initial data (in  $\dot{H}^1$ norm). It is also possible to give estimates concerning a possible blowup (of the $\dot{H}^1$ norm). These results are stated in the three following propositions, where
we recall that  $\mu$ is the Maxwellian given by eq. (\ref{Defmu}), and we denote $h:= f - \mu$ and $h_0:= f_0 - \mu$.
\medskip 

We begin with the local well-posedness of the equation:

\begin{prop}
\label{localwell} 
Let $f_0 \in  L \log L (\R^3)\cap  L^1_{55}(\R^3) \cap 
\dot{H}^1(\R^3)$ be a nonnegative initial datum  satisfying the normalization \eqref{f0}. 
Then there exists    a time $\mathcal{T}:=\f54(\| h_0 \|_{\dot{H}^1}^2+C_7^{-1})^{-\f45}$ (where $C_7>0$ only depends on 
$K$ such that $\|f_0\|_{L^1_{55}} + \|f_0\|_{L\log L} \le K$), such that  the  Landau equation \eqref{landau} -- \eqref{13d} admits a unique strong
solution on the interval $[0, \mathcal{T}]$. By strong solution, we mean here
 that $f\in C([0,\mathcal{T}]; \dot{H}^1) \cap L^2([0,\mathcal{T}]; H^{2}_{-3/2})$.
\end{prop}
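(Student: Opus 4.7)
The plan is to extract an a priori differential inequality for $\|h(t)\|_{\dot H^1}^2$ from the analysis underlying Theorem \ref{maintheorem1}(i), integrate it via an ODE comparison to obtain the lifespan $\mathcal T$, and then construct the solution by an approximation argument. Throughout, the uniform moment and entropy bounds from Theorem \ref{thm:decay} are used to reduce every constant to a dependence on $K$ alone.

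The key step is an estimate, valid for smooth rapidly decaying nonnegative solutions, of the form
\[
\frac{d}{dt}\|h(t)\|_{\dot H^1}^2 \;+\; c_0\,\|f(t)\|_{H^2_{-3/2}}^2 \;\le\; \bigl(\|h(t)\|_{\dot H^1}^2 + C_7^{-1}\bigr)^{9/5}.
\]
To derive it, I would test \eqref{landau} against $-\Delta h$ and split $Q(f,f) = Q(\mu,h) + Q(h,\mu) + Q(h,h)$. The linearized piece $\mathcal{L} h = Q(\mu,h) + Q(h,\mu)$ produces the coercive $H^2_{-3/2}$ dissipation on the left, using the entropy dissipation bounds \eqref{Dfdissipation11}--\eqref{Dfdissipation} exactly as in the proof of Theorem \ref{maintheorem1}(i); the quadratic piece $Q(h,h)$, dominated by the $8\pi h^2$ term from \eqref{Qbis}, is controlled through $\int h|\nabla h|^2\,dv$ via Gagliardo--Nirenberg, the sharp Sobolev embedding $\dot H^1 \hookrightarrow L^{6,2}$, and the $L^{3,1}_{-3}$ bound inherited from the entropy dissipation. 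A careful application of Young's inequality absorbs the remainder into the left-hand dissipation and yields the exponent $9/5$. Dropping the dissipative term and setting $y(t) := \|h(t)\|_{\dot H^1}^2 + C_7^{-1}$, the inequality $y' \le y^{9/5}$ separates and integrates to $y(t)^{-4/5} \ge y(0)^{-4/5} - \tfrac{4}{5}t$, so $y$ remains finite precisely on $[0,\mathcal T)$ with $\mathcal T = \tfrac54 (\|h_0\|_{\dot H^1}^2 + C_7^{-1})^{-4/5}$. Reinstating the dissipative term then delivers the $L^2_tH^2_{-3/2}$ membership.

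To construct the solution, regularize $f_0$ into a smooth, strictly positive, rapidly decaying family $f_0^n$ preserving (uniformly in $n$) the $L^1_{55}\cap L\log L \cap \dot H^1$ norms. The Arsenev--Peskov theorem provides smooth solutions $f^n$, and the a priori estimate above, applied uniformly in $n$, extends them to $[0,\mathcal T]$ with uniform bounds in $C_t\dot H^1 \cap L^2_t H^2_{-3/2}$. Aubin--Lions compactness then extracts a subsequential limit $f \in C([0,\mathcal T];\dot H^1)\cap L^2([0,\mathcal T];H^2_{-3/2})$, and weak passage in \eqref{landau} shows that $f$ solves the equation. Uniqueness follows from Fournier's criterion \cite{Fournier}: the $L^2_tH^2_{-3/2}$ bound, combined with the Sobolev embedding $H^2\hookrightarrow L^\infty$ and moment interpolation against the $L^1_{55}$ control from Theorem \ref{thm:decay}, places $f$ in the class $L^\infty_tL^1_2\cap L^1_tL^\infty$ on $[0,\mathcal T]$ required by \cite{Fournier}.

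The hardest point is the differential inequality of the second paragraph, specifically the emergence of the critical exponent $9/5$ (and hence the scaling $\mathcal T \sim \|h_0\|_{\dot H^1}^{-8/5}$): it reflects the precise balance between the $f^2$ nonlinearity of the Landau--Coulomb operator and the $\dot H^1$ dissipation, and making it rigorous requires chaining the sharp Sobolev embedding, the O'Neil product estimate on Lorentz spaces (Proposition \ref{oneil}), and \eqref{Dfdissipation}, in exactly the right order, with all constants depending only on $K$.
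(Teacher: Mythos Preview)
Your overall architecture (an $\dot H^1$ differential inequality with exponent $9/5$, ODE comparison to get $\mathcal T$, approximation, then Fournier uniqueness) matches the paper's. However, two of the steps you sketch would not go through as written.

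\textbf{The decomposition and the coercivity.} You split $Q(f,f)=Q(\mu,h)+Q(h,\mu)+Q(h,h)$ and claim the linearized piece supplies the coercive $H^2_{-3/2}$ dissipation. The paper does \emph{not} do this: it uses $\partial_t h=Q(f,h)+Q(h,\mu)$ and extracts coercivity from the term $I_{1,1}=\sum_k\int (a*f):\nabla\partial_k h\otimes\nabla\partial_k h$, with the full $f\ge 0$ in the convolution (Proposition~\ref{coercivity} and Corollary~\ref{coercivity2}). With your split, the diffusive part of $Q(h,h)$ contributes a term $\int(a*h):\nabla\partial_k h\otimes\nabla\partial_k h$ of indefinite sign (since $h$ changes sign), of size roughly $\|f\|_{L^{3,1}_{-3}}\|\nabla^2 h\|_{L^2_{-3/2}}^2$; for large $\|\nabla h\|_{L^2}$ this cannot be absorbed into the coercivity coming from $a*\mu$. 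Keeping $a*f$ in the coercive term is what makes the argument non-perturbative.

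\textbf{The origin of the exponent $9/5$.} You attribute the $9/5$ to ``the $L^{3,1}_{-3}$ bound inherited from the entropy dissipation,'' i.e.\ \eqref{Dfdissipation}. That route produces a factor $D(f)$ on the right and leads to the inequality of Theorem~\ref{maintheorem1}(i), not to a closed inequality in $\|\nabla h\|_{L^2}$ alone. The paper's local-existence proof (Proposition~\ref{Esum22}) replaces the entropy-dissipation bound by the purely Sobolev-based interpolation \eqref{f31e}, namely $\|f\|_{L^{3,1}_{-3}}\le C(1+\|\nabla h\|_{L^2})^{4/5}$ (from Proposition~\ref{interpL31}). Feeding this into the estimates for $\mathcal I,\mathcal{II}$ yields $\|\nabla h\|_{L^2}^{18/5}$ on the right, hence $y'\le y^{9/5}$ with $y=\|\nabla h\|_{L^2}^2+C_{11}^{5/9}$. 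Without this switch, your differential inequality does not close.

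Two smaller points. First, your uniqueness argument asserts $H^2_{-3/2}\hookrightarrow L^\infty$, which fails because of the negative weight; the paper interpolates $L^\infty_t(L^1_{55})$, $L^\infty_t(\dot H^1)$ and $L^2_t(H^2_{-3/2})$ to land in $L^{16/7}_t(H^{7/4}_{23/16})\subset L^2_t(L^\infty)$. Second, the paper's approximation is not Arsenev--Peskov but the regularized kernel $a^\epsilon$ of \eqref{ae}, for which all the a~priori estimates are shown to hold uniformly in $\epsilon$ (Proposition~\ref{thmappro}); the $C_t\dot H^1$ continuity is then proved directly from $\frac{d}{dt}\|\nabla h\|_{L^2}^2\in L^1_t$ together with weak continuity, rather than via Aubin--Lions.
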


We turn then to the  global well-posedness for small initial data:

\begin{prop}\label{mainresult2}   Let $f_0 \in  L \log L (\R^3)\cap  L^1_{55}(\R^3)\cap H^{1}_{2}(\R^{3})$ be a nonnegative initial datum satisfying the normalization \eqref{f0}, and 
$h_0:= f_0 - \mu$.
\par 
Then there exists a (small) constant $\epsilon_0>0$ (depending only on $K>0$ such that $\|f_0\|_{L^1_{55}} + \|f_0\|_{L \log L}  \le K$), 
such that if $\|h_0\lr{\cdot}^{2}\|_{\dot{H^1}}\le \epsilon_0$,
  the Landau equation with Coulomb potential (\ref{landau}) -- (\ref{13d}) admits a (unique) global smooth (that is, lying in $L^{\infty}([0, +\infty); H^1_2(\R^3))$) and nonnegative solution, denoted by $f:=f(t,v)$. Moreover, (under the same assumption on the initial datum) there exists a constant $C>0$ only depending on $K$ 
 such that
(with the notation $h:=f-\mu$)
\beno \|h(t, \cdot )\|_{H^1_2}\le C\, (1+t)^{- \f{15}4}. \eeno
\end{prop}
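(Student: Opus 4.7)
The argument is perturbative around the Maxwellian $\mu$. Under the smallness hypothesis $\|h_0\langle\cdot\rangle^2\|_{\dot{H}^1}\le\epsilon_0$ (with $h_0:=f_0-\mu$), I would run a weighted energy estimate in $H^1_2$ whose coercivity --- coming from the linearization of $Q$ around $\mu$ --- dominates the quadratic nonlinearity $Q(h,h)$ and yields global existence. Algebraic decay toward $\mu$ then follows by combining the uniform weighted $H^1_2$-bound with the polynomial-in-time entropy decay of Theorem~\ref{thm:decay}, the rate $-15/4$ arising from the sharp interpolation between the fast $L^1$-decay provided by Csiszar--Kullback and higher-regularity norms propagated by the smallness assumption.

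\textbf{Local existence and weighted energy estimate.} First, invoke Proposition~\ref{localwell} to obtain a local strong solution in $C([0,\mathcal{T}];\dot H^1)$, and extend it to $H^1_2$ by propagating the weight $\langle v\rangle^2$. Writing
\begin{equation*}
\partial_t h=\mathcal{L}h+Q(h,h),\qquad \mathcal{L}h:=Q(\mu,h)+Q(h,\mu),
\end{equation*}
I would test against $h\langle v\rangle^4$ and $-\Delta(h\langle v\rangle^4)$ and integrate by parts. The linear piece $\mathcal{L}h$ produces a coercive dissipation $\mathcal{D}(h)$ of weighted $H^1$-type controlling $\|\nabla(h\langle v\rangle^2)\|_{L^2_{-3/2}}^2$ plus lower-order terms, while the quadratic piece $Q(h,h)$ is controlled via the Lorentz-norm bound~\eqref{Dfdissipation} together with Sobolev embedding. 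A bootstrap maintaining $\|h(t)\langle\cdot\rangle^2\|_{\dot H^1}\le 2\epsilon_0$ then absorbs the nonlinearity into $\mathcal{D}$, giving the a priori estimate
\begin{equation*}
\|h(t)\|_{H^1_2}^2+\int_0^t\mathcal{D}(h(s))\,ds\le C\epsilon_0^2,\qquad t\ge 0,
\end{equation*}
and hence a unique global strong solution in $L^\infty([0,\infty);H^1_2)$.

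\textbf{Algebraic decay.} For the decay rate, apply Theorem~\ref{thm:decay} with $\ell=55$ to get $H(t)\le C_\beta(1+t)^{-\beta}$ for any $\beta$ up to nearly $10$; by Csiszar--Kullback this yields $\|h(t)\|_{L^1}\lesssim(1+t)^{-\beta/2}$. Interpolating this fast $L^1$-decay against the uniform $H^1_2$-bound (and against higher-regularity weighted Sobolev norms, propagated globally by a parallel energy argument under the same smallness) via a weighted Gagliardo--Nirenberg inequality yields $\|h(t)\|_{H^1_2}\le C(1+t)^{-15/4}$, the exponent $15/4$ being fixed by the interpolation gains together with the choice $\ell=55$.

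\textbf{Main obstacle.} The key technical point will be propagating \emph{weighted} higher Sobolev regularity globally with a smallness threshold $\epsilon_0$ independent of the order of differentiation --- this is what makes the Gagliardo--Nirenberg interpolation produce the sharp exponent $15/4$. Closing such a hierarchy of estimates requires handling commutators of the weight $\langle v\rangle^{\beta}$ and of partial derivatives with the nonlocal Landau coefficients $a_{ij}\ast f\sim|v|^{-1}$ and $b_i\ast f\sim|v|^{-2}$ at infinity; these produce borderline contributions that barely close against the weighted dissipation of $\mathcal{L}$. Ensuring that the perturbative coercivity dominates both the quadratic nonlinearity $Q(h,h)$ and every such commutator is the heart of the argument.
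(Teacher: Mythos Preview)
Your bootstrap argument has a genuine gap that stems from the degeneracy of the Coulomb dissipation. When you test the equation against $h\langle v\rangle^4$ and $-\Delta(h\langle v\rangle^4)$, the coercive term you obtain is (as you correctly note) of the type $\|\nabla h\|^2_{H^1_{1/2}}$, i.e.\ it carries \emph{three fewer} powers of $\langle v\rangle$ than the energy $\|h\|^2_{H^1_2}$. But the right-hand side does not consist only of the quadratic nonlinearity $Q(h,h)$: the linear cross terms from $Q(h,\mu)$ and the commutators of the weight with $a\ast f$, $b\ast f$ produce contributions of size $C(K)\,\|h\|^2_{H^1_2}$ (see the paper's estimate \eqref{weightedH1}). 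These cannot be absorbed into the dissipation, because $\|\nabla h\|^2_{H^1_{1/2}}$ does not dominate $\|h\|^2_{H^1_2}$. The resulting differential inequality is
\[
\frac{d}{dt}\,Y^2(t)+\text{(degenerate dissipation)}\le C_5\bigl(Y^4(t)+Y^2(t)\bigr),\qquad Y^2:=\|h\|^2_{H^1_2},
\]
and when $Y$ is small the term $Y^2$ on the right is \emph{larger} than anything the dissipation supplies (since $Y^{14/5}\ll Y^2$). Your bootstrap ``maintain $Y\le 2\epsilon_0$ and absorb'' therefore does not close: Gr\"onwall only gives $Y^2(t)\le \tilde\epsilon\, e^{2C_5 t}$, allowing exponential growth, and the claimed uniform bound $\|h(t)\|^2_{H^1_2}\le C\epsilon_0^2$ does not follow.

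The paper's mechanism is different and is precisely designed to beat this obstruction. Via the interpolation of Proposition~\ref{interpH1a} and the $L^1$-moment decay of Lemma~\ref{ml}, the dissipation is lower bounded by $C_4(1+t)^{k_3}Y^{14/5}$ with $k_3>3$. The point is that the solution is \emph{allowed} to grow exponentially for a time $T_1\sim|\log\tilde\epsilon|$; by then the factor $(1+t)^{k_3}$ has become so large that $(1+T_1)^{k_3}Y^{4/5}\gtrsim |\log\tilde\epsilon|^{k_3-2/5}\gg 1$ once $Y^2$ reaches the threshold $|\log\tilde\epsilon|^{-1}$, and the dissipation then dominates the linear term $2C_5 Y^2$. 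A short contradiction argument shows $Y^2$ never exceeds $|\log\tilde\epsilon|^{-1}$, and the decay $Y(t)\le C(1+t)^{-5k_3/4}\le C(1+t)^{-15/4}$ follows directly from the same ODI, not from a separate interpolation with entropy decay. Your proposal misses this time-dependent enhancement of the dissipation, which is the key idea; the ``coercivity of $\mathcal{L}$ dominates the nonlinearity'' picture is the right one for hard/moderately soft potentials but fails here.
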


 Finally, we give some clues about the behavior of solutions close to a potential blowup:

\begin{prop}
\label{des}
Let  $f:= f(t,v)$ be a nonnegative solution of the Landau equation with Coulomb potential (\ref{landau}) -- (\ref{13d}), corresponding to initial data satisfying the assumptions of Theorem \ref{maintheorem1}.  We suppose that  $f \in L^{\infty}([0, t]; {H}^1(\R^3))$ for all $t \in [0, \bar{T}[$, and that $\|\nabla f(t)\|_{L^2(\R^3)}$  blows up at  time $\bar{T}$. Then  
 for $\bar{T}-t\ll 1$ and some explicitly computable constants $c,C>0$, $C_1, C_2>0$ (depending only on $K$ satisfying
 $ \|f_0\|_{L\, \log L(\R^3)} +  \|f_0\|_{L^1_{55}(\R^3)} \le K $),
\beno  
\|h(t)\|_{\dot{H}^1}&\ge& 
 C(H(t)- \bar{H})^{-\f54} \quad\mbox{with}\quad H(t)- \bar{H} \ge C\,(\bar{T}-t)(1+\bar{T})^{k+1};\\
  \inf_{s\in[t,\bar{T}]} \|h(s)\|_{\dot{H}^1}& \le& \bigg(\mathcal{B}(c\,(\bar{T}-t)) \f{2\,(\bar{T} - t)}{C_1}(1+ \bar{T})^{-(k_1 +k_2)}\bigg)^{\f5{14}},  \eeno 
	where 
 $\bar{H}:=\lim_{t\rightarrow \bar{T}-}H(t)$ and
 $\mathcal{B}(x) :=C_2\, x^{-13}\exp\{7x^{-\f{450}{14}}\}$.
\end{prop}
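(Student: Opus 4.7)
For the first lower bound, I would integrate the monotonicity inequality of Theorem \ref{maintheorem1}(i) from $t$ to $s\in(t,\bar T)$ and let $s\to\bar T^-$. Setting $A(s):=\|h(s)\|_{\dot H^1}^2+B^*(1+s)^{1-k_2}$, the monotonicity gives
\[
H(t)-\f52 A(t)^{-2/5} \ge H(s)-\f52 A(s)^{-2/5}+\f{C_6}{k+1}\bigl[(1+s)^{k+1}-(1+t)^{k+1}\bigr].
\]
The blow-up hypothesis $\|h(s)\|_{\dot H^1}\to\infty$ forces $A(s)^{-2/5}\to 0$, while $H(s)\to\bar H$ by definition. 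Rearranging yields
\[
H(t)-\bar H \ge \f52 A(t)^{-2/5}+\f{C_6}{k+1}\bigl[(1+\bar T)^{k+1}-(1+t)^{k+1}\bigr].
\]
Dropping the second (nonnegative) summand and inverting gives $A(t)\ge c(H(t)-\bar H)^{-5/2}$; once $\bar T-t$ is small enough that $(H(t)-\bar H)^{-5/2}$ dominates $B^*(1+t)^{1-k_2}$, this reduces to $\|h(t)\|_{\dot H^1}\ge C(H(t)-\bar H)^{-5/4}$. The companion estimate $H(t)-\bar H\ge c(\bar T-t)(1+\bar T)^{k+1}$ follows by dropping instead the first summand and bounding the bracket from below by elementary estimates.

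For the upper bound on $\inf_{s\in[t,\bar T]}\|h(s)\|_{\dot H^1}$ the plan is to combine a pigeonhole extraction from the entropy dissipation with a singular Gronwall estimate. The identity $\int_t^{\bar T} D(f(s))\,ds = H(t)-\bar H$, coupled with the decay $H(t)\le C(1+t)^{-\beta}$ from Theorem \ref{thm:decay} (for which the moment hypothesis $\ell=55$ is precisely chosen so that $\beta$ is large enough), furnishes by a mean-value argument a time $s^*\in[t,\bar T]$ at which
\[
D(f(s^*))\le \f{H(t)-\bar H}{\bar T-t}\le \f{C}{(\bar T-t)\,(1+\bar T)^{\beta}}.
\]
The coercivity estimates \eqref{Dfdissipation11}--\eqref{Dfdissipation}, together with the moment propagation $\|f(s^*)\|_{L^1_\ell}\le C(1+s^*)$ from the same theorem, then convert this into a control on $\|h(s^*)\|$ in a suitable weak weighted Lorentz norm; the factor $(1+\bar T)^{-(k_1+k_2)}$ in the statement arises from combining the decay of $H$ with the weight needed to transfer from $\sqrt{f}$-control to $h$-control.

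The final step is to upgrade this weak control at $s^*$ into a bound on $\|h(s^*)\|_{\dot H^1}$. Testing $\partial_s h=Q(f,f)$ against $-\Delta h$, using the two representations in \eqref{Qbis} and the Lorentz-bilinear estimates from the appendix, one derives a differential inequality of the form
\[
\f{d}{ds}\|h(s)\|_{\dot H^1}^{2} \le \alpha(s)\,\|h(s)\|_{\dot H^1}^{2}+\beta(s),
\]
whose coefficients have singularities of order $(\bar T-s)^{-\gamma}$ integrable only in a Gronwall-exponential sense; integrating backwards in time produces exactly the factor $\mathcal B(c(\bar T-t))=C_2(\bar T-t)^{-13}\exp\{7(\bar T-t)^{-450/14}\}$, and the final exponent $5/14$ emerges from interpolating between the weak weighted Lorentz norm established at $s^*$ and the $\dot H^1$ norm. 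The main obstacle will be this interpolation-plus-Gronwall step: one must track carefully how the singular nonlinear coefficient dovetails with the pigeonhole bound, and verify that the exponents $k_1,k_2$ and the singular exponent $450/14$ are compatible with the functional inequalities available for the Coulombian Landau operator.
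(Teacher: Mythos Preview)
Your argument for the lower bound is correct and is exactly what the paper does: integrate the monotonicity inequality of Theorem~\ref{maintheorem1}(i) (equivalently Lemma~\ref{edo1}) from $t$ to $s$, let $s\to\bar T^-$, use the blow-up hypothesis to kill $A(s)^{-2/5}$, and read off both conclusions.

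Your plan for the upper bound, however, misses the key point and is built on a misconception. The function $\mathcal{B}(x)=C_2 x^{-13}\exp\{7x^{-450/14}\}$ is \emph{not} produced by any Gronwall argument: it is already present, for every small $\eta>0$, in the basic $\dot H^1$ energy inequality of Proposition~\ref{esqml} (estimate~\eqref{EE1}),
\[
\frac{d}{dt}\|\nabla h\|_{L^2}^2 + C_1(1+t)^{k_1}\|\nabla h\|_{L^2}^{14/5} \le \eta\, C_3\, D(f)\,\|\nabla h\|_{L^2}^{14/5} + \mathcal{B}(\eta)\,(1+t)^{-k_2}.
\]
The paper's proof of the upper bound (Lemma~\ref{LemODI2}) is a three-line manipulation of this inequality: divide by $X^{14/5}$ with $X=\|\nabla h\|_{L^2}$, integrate from $t$ to $\bar T$, use $X(\bar T)^{-4/5}=0$ and $\int_t^{\bar T}D\le H(0)$, and obtain
\[
C_1(1+t)^{k_1}(\bar T-t)\;\le\;\eta\, C_3\, H(0)\;+\;\mathcal{B}(\eta)\,(1+t)^{-k_2}(\bar T-t)\,\sup_{s\in[t,\bar T]}X(s)^{-14/5}.
\]
Now choose $\eta=c(\bar T-t)$ so that the first right-hand term is absorbed into the left; rearranging gives the stated bound on $\inf_s X(s)$. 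The exponent $5/14$ is simply the reciprocal of the $14/5$ in the dissipation term, and the factor $(1+\bar T)^{-(k_1+k_2)}$ comes directly from the explicit time weights $(1+t)^{k_1}$ and $(1+t)^{-k_2}$ in~\eqref{EE1}; neither has anything to do with interpolation or with transferring from $\sqrt f$-control to $h$-control.

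Your proposed route --- pigeonhole on $D(f)$ to find a good time $s^*$, convert small $D(f(s^*))$ into a weak Lorentz bound via~\eqref{Dfdissipation11}, then run a singular Gronwall to reach $\dot H^1$ --- is both unnecessary and genuinely problematic. Small $D(f(s^*))$ controls only $\|\sqrt{f(s^*)}\|_{H^1_{-3/2}}$, which does not yield any bound on $\|h(s^*)\|_{\dot H^1}$; you flag this as the ``main obstacle'' but give no mechanism to cross it, and there is none in the paper's toolkit. Even if one could, there is no reason the output would reproduce the precise form $\mathcal{B}(c(\bar T-t))$. The point you are missing is that the free parameter $\eta$ in~\eqref{EE1} \emph{is} the tool: the upper bound follows by exploiting that inequality at a well-chosen $\eta$, not by re-deriving anything.
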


\subsection{Comments}  

\subsubsection{\bf Comment on the monotonicity formula \eqref{MonoFom}.} 
To the best of our knowledge, inequality~\eqref{MonoFom} of statement (i) of Theorem \ref{maintheorem1} is a new monotonicity formula for  the (smooth solutions of the) Landau equation with Coulomb potential. 
The  explicit increasing rate $C_6\,(1+t)^k$  comes from the   dissipation effect of the equation.  
We denote the  monotone functional  by 
\ben\label{monoquantity}
\mathcal{M}(t):=H(t)-\f52\bigg(\|h(t)\|_{\dot{H}^1}^2+ B^*\, (1+t)^{- k_2 +1}\bigg)^{-\f25},
\een   
and notice that the differential inequality~\eqref{MonoFom} formally allows  $\|h(t)\|_{\dot{H}^1}$ to blow up. The global dynamics of 
$\mathcal{M}(t)$ described by inequality~\eqref{MonoFom} gives clues about the global dynamics for the original solution:
\begin{itemize}
\item
When $\mathcal{M}(t_0)$ is below its critical value, (that is, $\mathcal{M}(t_0) \le 0$, cf. comment below), the solution to  equation (\ref{landau}) -- (\ref{13d}) after time $t_0$ will remain bounded in $\dot{H}^{1}$ and 
converge to the equilibrium; this is indicated in Statement $(ii)$ of Theorem \ref{maintheorem1}. We call this situation the  {``stable regime''}. 
\item 
When $\mathcal{M}(t)$ is above its 
critical value (that is, $\mathcal{M}(t_0) \ge 0$), some blowup may occur, but there exists a computable time $T^*$ (strictly bigger than the blowup time if it occurs)
 such that $\mathcal{M}(t)$ gets inside 
the stable regime for any $t>T^*$; this is indicated in Statement $(iii)$ of Theorem \ref{maintheorem1}.
\end{itemize}
Finally, note that in Statement $(i)$ of Theorem \ref{maintheorem1}, the differential inequality \eqref{MonoFom} is shown to rigorously hold for all smooth and
quickly decaying (when $|v| \to \infty$) solutions of eq. (\ref{landau}) -- (\ref{13d}). It also rigorously holds for (smooth and
quickly decaying when $|v| \to \infty$) solutions
 to an approximated problem (that is, problem \eqref{approeq} described in subsection \ref{sub26}), of  equation (\ref{landau}) -- (\ref{13d}).
 Finally, when it is integrated with respect to time  (see \eqref{IntegratedMF}), it is shown in the proof of Proposition \ref{des} that it also
rigorously holds  for strong (that is, lying in $L^{\infty}_t(H^1_m)$ for $m$ large enough) solutions to the original equation (\ref{landau}) -- (\ref{13d}), such as those appearing (on suitable time intervals)  in Propositions \ref{localwell} and \ref{mainresult2}.  


\subsubsection{\bf Comment on the non optimality of the presented results}
We notice that
  the lifespan of local wellposedness is not optimal  in Proposition \ref{localwell}. For example, it can be extended by the effect of the dissipation term $C_6\,(1+t)^k$, which is not used in the proof of this Proposition.
\par 
It is also possible to use Proposition \ref{localwell} in order to relax the condition $\mathcal{M}(0)\le0$ in statement (ii) of  Theorem \ref{maintheorem1}. Using the fact that the Landau equation with Coulomb interaction admits a local solution $f\in C([0,\mathcal{T}];\dot{H}^1)$  where $\mathcal{T}$ depends only on the initial data $f_0$ (see Proposition \ref{localwell} for more details), this condition can 
be transformed in
\ben\label{TGRC} \mathcal{M}(0)\le \f{C_6}{k+1}\big((1+\mathcal{T})^{k+1}-1\big). \een 
 Indeed,  thanks to estimates \eqref{MonoFom} and \eqref{monoquantity}, one gets 
\beno 
 \mathcal{M}(\mathcal{T})+C_6\, \int_0^{\mathcal{T}}(1+t)^{k}dt\le  \mathcal{M}(0),
\eeno
so that $\mathcal{M}(\mathcal{T})\le0$ and we can use statement $(ii)$ of Theorem \ref{maintheorem1} starting at time $\mathcal{T}$ (the
equation being invariant by translation in time).

\subsubsection{\bf Comment on the impossibility of blowup after a finite time.} This is a direct consequence of  inequality \eqref{MonoFom} since after the time $T^*$, the monotone functional $\mathcal{M}(t)$ will enter the stable regime (defined in Comment 1.4.1). 
\begin{itemize}
\item 
If the solution has not blown up  in $\dot{H}^1$ before the time $T^*$,
 then the solution will remain strong (that is, will lie in $\dot{H}^1$) for all time thanks to inequality \eqref{noblowupht}. Then  
 thanks to the uniqueness result established in \cite{Fournier} and the regularity obtained in Proposition \ref{localwell}, the constructed solution is the unique strong solution with the initial data $f_0$ satisfying the conditions stated in Theorem \ref{maintheorem1}.   

\item Looking at definition \eqref{monoquantity}, we see that $\mathcal{M}(t)$ is still well-defined if $\|h(t)\|_{\dot{H}^1}=\infty$.
When such a blowup (in $\dot{H}^1$ norm) happens, the constructed solution is the  unique strong solution before the first blowup time, and 
becomes strong again after time $T^*$.
Note that in order to give a rigorous proof of these facts, we  apply the estimates obtained in this paper to  
solutions of an approximated problem and then pass to the limit.  
\end{itemize}

Finally, combining our result with the previous result in \cite{GGIV}, we see that
   the set of singular times for weak solutions is included in a subset of the interval $[\mathcal{T}, T^*]$ 
 whose  Hausdorff dimension is at most $1/2$. 
 
 \subsubsection{\bf Comment on the description of the potential blowup.} 
 Proposition \ref{des}  describes a potential blowup phenomenon for solutions to the Landau equation with Coulomb potential.  We recall that restrictions are given in \cite{GG1} and \cite{GGIV} on the possible appearance of such a blowup. Our lower bound for the blowup rate is given in terms of relative entropy.
Our upper bound enables to exclude a double exponential (that is, exponential of an exponential) growth of the $H^1$ norm of the solution close 
to the first blowup time. This bound  heavily depends on the number of initial moments
 which are  assumed.

 \subsubsection{\bf Comment on the  dependence of the coefficients appearing in the main Theorem with respect to the $L^1$ moments.} 
We can provide estimates for the explicit dependence of all coefficients in the Theorem \ref{maintheorem1}. Moreover we can 
extend the validity of this Theorem somewhat, when the initial data have less than $55$ moments. Indeed, when
 $f_0\in L^1_\ell$, let us define
   \beno 
q_{\ell, \theta} :=  - \frac{2\,\ell^2 - 25\,\ell + 57}{18\,(l-2)}\, \left(1 - \frac{\theta}{\ell}\right)  + \frac{\theta}{\ell},
\eeno
and choose $\ell>31$ and 
$\tau\in [31,\ell)$ such  that $q_{\ell,99/4}>7/4,\;q_{\ell,\tau}>0$.  Then
one can check that it is possible to take
$k:= \min\{k_1,\f25k_2-\f{7}5\}$ with  $k_1=\f45q_{\ell,14/5}, 
k_2=q_{\ell,99/4}$,
 in such a way that estimate (\ref{MonoFom}) holds.
In our main Theorem, we selected $\ell=55$ and $\tau=45$, for the sake of readability.

\subsubsection{\bf Comment on Landau equation with   very soft potentials.} We can generalize the result of Theorem~\ref{maintheorem1} 
to the Landau equation with very soft potential in the range $\gamma\in ]-3,-2]$, that is, when
\ben\label{agamma} a(z):=|z|^{\gamma+2}\left(Id -\frac{z\otimes z}{|z|^2} \right).\een 
Indeed, the main difference in the proof with the Coulomb case  lies in the estimate of the term
$\int\int_{|v-v_*| \le 1} f(v_*)\,|v-v_*|^{\gamma+1}|\na h(v)|\,|\na^2 h(v)|\, dv_*dv$, which appears
 in Proposition \ref{EstI1}. The Coulomb potential case $\gamma=-3$ 
is critical in the sense that it requires (in order to close the differential inequality (\ref{MonoFom})) the use of the Lorentz space $L^{3,1}$, since
\beno 
\int\int_{|v-v_*| \le 1} f(v_*)\,|v-v_*|^{\gamma+1}|\na h(v)|\,|\na^2 h(v)|\,dv_*dv\le C\|f\|_{L^3}\|\na h\|_{L^2}\|\na^2 h\|_{L^2}
 \eeno
holds when $\gamma >-3$ but not when $\gamma = -3$.
\par 
We think therefore that when $\gamma \in ]-3,-2]$, it is possible to avoid the use of the Lorentz spaces
 and still get a closed inequality
 in the same spirit as inequality (\ref{MonoFom}).

We also believe that if  $\gamma=-2-\eta$ with $\eta>0$ sufficiently small, then 
 the equation will generate a global smooth and bounded solution if initially $\|\nabla h_0\|_{L^2}^2\le  (C_1\,\eta)^{- C_2\,\eta^{-1}} - C_3$ for some $C_1$, $C_2$, $C_3>0$ (depending on $H(0)$). This is coherent with the existing theory of 
existence of global strong solutions when $\gamma \in [-2, 0[$, cf. \cite{KCWu} for example.

\subsubsection{\bf Comment on the comparison with Leray's work for 3D Incompressible Navier-Stokes.} We recall that the 3D incompressible Navier-Stokes equations  reads
\ben\label{NSeq}\left\{
      \begin{array}{ll} &
\pa_t u+u\cdot \na u-\triangle u+\na p=0;\\&
\mathrm{div} \, u=0;\\&
u|_{t=0}=u_0.
\end{array}
    \right.
\een
In the classical work \cite{Leray}(see also~\cite{OP} and reference therein), Leray proved the following results:
\begin{itemize}
\item[(i)] If $\|u_0\|_{L^2}\|\na u_0\|_{L^2}\ll 1$,  the 3D incompressible Navier-Stokes equations admits 
a global smooth solution, which nowadays are called Leray solutions.

\item[(ii)] He also considered the potential blowup phenomenon.  Using the lower bound of the blowup rate for the potential singularity,
 one can show that  the set of singular times for suitable weak solutions  has Hausdorff dimension at most $1/2$. 
\end{itemize}
 For a  result about longtime regularity, we refer to~\cite{Pierre,Tsai}.
\medskip

 We are in a position to compare our results with Leray's.  
\medskip

 If we consider that the relative entropy $H$ plays for the Landau equation with Coulomb interaction the same role as the energy $\|u\|_{L^2}$ for the  Navier-Stokes equations, it is natural to compare the Leray condition $\|u_0\|_{L^2}\, \|\na u_0\|_{L^2}\ll 1$ 
to the condition $\mathcal{M}(0) \le 0$, written under the form $H(0)\,( \|h(0)\|_{\dot{H^1}}^2 + B^*)^{2/5} \le 5/2$. We see then that as 
in the Navier Stokes equation, the $L^2$ norm of the gradient of the solution plays a decisive role. Note however that no equivalent of 
the term $B^*$ exists in Leray's condition for Navier Stokes equation, which constitutes a significant difference.
\medskip




 The condition $\mathcal{M}(0) \le 0$  includes the case in which the  initial relative entropy  $H(0)$ is small,
  while $\|h(0)\|_{\dot{H}^1}$ may be  large.
Note that such (normalized) initial data exist. Indeed one can take
initial data $f(0)$ close (in weighted $L^{1}$) to the Maxwellian $\mu$, but having quick oscillations, so that  $\|h(0)\|_{\dot{H}^1}$  is large  (see Proposition \ref{example} for a concrete example).


\medskip

Note that in Proposition~\ref{des}, we get not only a lower  bound, but also an upper bound for the rate at which a potential blowup occurs.
However, the lower bound is given in terms of relative entropy and thus probably cannot be used  to estimate  the size of 
singular times. We recall that the size of that set for the Landau equation with Coulomb potential is anyway estimated in \cite{GGIV}. 
 


\subsection{Sketch of the proof}

We present here the main ideas which are used in the proofs of Theorem \ref{maintheorem1} and the related results. In particular, we point out that the mechanisms which enable to build the global strong solutions to eq. (\ref{landau}) -- (\ref{13d}) when $\mathcal{M}(0) \le 0$ (in Theorem \ref{maintheorem1}, statement (ii))
 and when $||h(0)\lr{\cdot}^2||_{\dot{H}^1}$ is small (in Proposition \ref{mainresult2}),  
are quite different. 
\medskip

Let us first recall that thanks to a previous study of the large time behavior of the Landau equation with Coulomb potential (cf. \cite{CDH}), the $L^1$ moments of $h=f-\mu$ decrease with a power law (cf. Theorem \ref{thm:decay}).
\par 
As a consequence,
by interpolation, we see that the dissipation  of  $\dot{H}^1$ energy typically
 increases as time goes. Roughly speaking, for some $C, k_1>0$, this dissipation is lower bounded in the following way:
  \beno  \|\na^2 h\|_{L^2_{-\f32}}^2 \ge  C\, \|h\|_{L^1_{15/4}}^{-\f45}\|\na h\|_{L^2}^{\f{14}5}\ge C\, (1+t)^{k_{1}}\|\na h\|_{L^2}^{\f{14}5}.
\eeno

\noindent $\bullet$\,{\bf When the initial data is far from equilibrium regime (measured in terms of $\dot{H}^{1}$ norm):} In this situation,  the main challenge is to show that the nonlinear terms can be controlled. Indeed,
by interpolation,
 the behavior of the nonlinear term with respect to the $\dot{H}^1$ energy 
  is of the same order as the dissipation term in the following sense:
  \beno \mbox{Nonlinearity}\precsim D(f)\|\na h\|_{L^2}^{\f{14}5}. \eeno
 These observations suggest that   the competition between the dissipation and  nonlinearity in $\dot{H}^1$ energy can be characterized as 
 $(1+t)^{k_{1}}\|\na h\|_{L^2}^{\f{14}5}$ versus $D(f)\|\na h\|_{L^2}^{\f{14}5}$.  Since it is expected that the dissipation 
 will dominate the nonlinear term after some time (remembering that $(1+t)^{k_1} \to +\infty$ and 
$\int_0^{\infty} D(f)(s)\,ds < +\infty$), one can understand the emergence of the new monotonicity formula
that we propose.
 The detailed arguments are included in Section 2.
 \medskip

\noindent $\bullet$\,{\bf When the initial data is close to equilibrium regime (measured in terms of $\dot{H}^{1}$ norm):} 
In this situation,  we have
  \beno  \mbox{Tail of linear term plus nonlinearity}\precsim  \|\na h\|_{L^2}^{4}+\|\na h\|_{L^2}^2, \eeno
as we can observe from equation~\eqref{weighted-h13} by neglecting the weights. 
Since we assumed that the $\dot{H}^1$ norm of the initial data is sufficiently small, we  see that
 the competition  occurs between $(1+t)^{k_{1}}\|\na h\|_{L^2}^{\f{14}5}$ and $\|\na h\|_{L^2}^2$. 

Suppose now that $\|\na h_0\|_{L^2}^2\sim \epsilon$. Then the smallness of $\|\na h\|_{L^2}^{2}$ can be kept at least for
an interval of time of length $|\ln \epsilon|$ (cf. the proof of Proposition \ref{mainresult2}).  It implies that at some point,
 the dissipation will be lower bounded 
 in the following way:
\beno
 (1+t)^{k_{1}}\|\na h\|_{L^2}^{\f{14}5}\ge C\, (1+|\log \epsilon|)^{k_{1}}\|\na h\|_{L^2}^{\f{4}5} \|\na h\|_{L^2}^2.
 \eeno
Then, when
 $\|\na h\|_{L^2}^{\f{4}5}$ is not small, 
the dissipation still prevails and prevents a blowup of the $\dot{H}^1$ norm.
We refer readers to the content of Section 4 for detailed and rigorous arguments. Note however that in the description above, weights are not taken into account, whereas they play a significant role in the proof of Proposition \ref{mainresult2}. Finally, we refer to \cite{GG2} for extra considerations on 
the competition between dissipation and nonlinearities.

\section{$\dot{H}^1$ estimate and the proof of Theorem \ref{maintheorem1}}

This section is devoted to the $\dot{H}^1$ estimate for the Landau equation  with Coulomb potential, 
which leads to the monotonicity formula (\ref{MonoFom}). We first provide 
a set of {\it a priori} estimates for the terms appearing in the equation (this is done in Subsections \ref{sub21} to \ref{sub25}). Then we
 show that all estimates rigorously hold  by passing to
 the limit in an approximated problem (this is done in Subsection \ref{sub26}), which enables us to  complete  the proof of Theorem \ref{maintheorem1}.

\subsection{Decomposition of the derivative in time of the $\dot{H}^1$ norm of the solutions to the Landau equation} \label{sub21}

To make full use of the results on  the long-time behavior of the solution (cf. Theorem \ref{thm:decay}),
 we write the Landau equation with Coulomb potential as follows (at the
formal level), setting
 $h:=f-\mu$, with $\mu$ defined by (\ref{Defmu}):
\begin{equation}\label{Eqh}
\partial_t h=Q(f,h)+Q(h,\mu) .
\end{equation} 

Then we focus (at the formal level) on the $\dot{H}^1$ norm of $h$. We write the equation (for $k=1,2,3$) satisfied by $\partial_k h$:
\begin{equation}
\partial_t (\partial_k h)=Q(f,\partial_k h)+Q(\partial_k f, h)+Q(\partial_k h, \mu)+Q(h,\partial_k \mu).
\end{equation}
Then we multiply it by $\partial_k h$, integrate with respect to $v$, and sum over all $k$.
 It gives
\ben\label{i14}
 \f12\f{d}{dt}\|\nabla h\|_{L^2}^2=I_1+I_2+I_3+I_4,
\een 
where $I_1, I_2, I_3$ and $I_4$ are defined (and subdivided) as follows:
\begin{enumerate}
\item $I_1 := \sum_{k=1}^3 \int_{\R^3} Q(f,\partial_k h)\,  \partial_k h\, dv $. We also write 
\begin{equation} \label{i1}
I_1 :=  - I_{1,1} + I_{1,2},
\end{equation}
where
$$  I_{1,1}  :=   \sum_{k=1}^3 \int_{\R^3}  (a * f) : \nabla \pa_k h \otimes \nabla \pa_k h \, dv, 
\quad I_{1,2}  :=   \sum_{k=1}^3 \int_{\R^3}  (b * f) \cdot \nabla \pa_k h \,  \pa_k h \, dv . $$

\item $I_2= \sum_{k=1}^3 \int_{\R^3} Q(\partial_k f, h)\,  \partial_k h\, dv $. We also write 
\begin{equation} \label{i2}
I_2 := - I_{2,1} +  I_{2,2},
\end{equation}
where
$$  I_{2,1} : =   \sum_{k=1}^3 \int_{\R^3}  (a * \pa_k f) : \nabla \pa_k h \otimes \nabla  h \, dv    =  \sum_{k=1}^3 \int_{\R^3}  (\pa_k a *  f) : \nabla \pa_k h \otimes \nabla  h \, dv , $$
\beno I_{2,2}  &:=&   \sum_{k=1}^3 \int_{\R^3}  (b * \pa_k f) \cdot \nabla \pa_k h \,  h \, dv   =  \sum_{k=1}^3 \int_{\R^3}  (\pa_k b * f) \cdot \nabla \pa_k h \,  h \, dv\\
&=& \sum_{k=1}^3    \sum_{i=1}^3  \bigg( - \int_{\R^3} (\pa_i\pa_k b_i * f)  \pa_k h \,  h \, dv  - \int_{\R^3} (\pa_k b_i * f)  
\pa_k h \, \pa_i h \, dv  \bigg)\\
&=& \sum_{k=1}^3\bigg( -8\pi\int_{\R^3} f(h\pa_k^2h +(\pa_k h)^2) \,dv+\int_{\R^3}(b*f)\cdot(\nabla h\pa_k^2h+\nabla
\pa_k h \pa_k h)dv\bigg) \\
&\le& \sum_{k=1}^3\bigg( -8\pi\int_{\R^3} f\, h\pa_k^2h\,dv+\int_{\R^3}(b*f)\cdot(\nabla h\pa_k^2h+\nabla \pa_kh \pa_kh)dv\bigg). \eeno  

\item $I_3 := \sum_{k=1}^3 \int_{\R^3} Q(\partial_k h, \mu)\,  \partial_k h\, dv $. We also write 
\begin{equation} \label{i3}
I_3 := - I_{3,1} + I_{3,2},
\end{equation}
where
 $I_{3,1}  :=   \sum_{k=1}^3 \int_{\R^3}  (a * \pa_k h) : \nabla \pa_k h \otimes \nabla  \mu \, dv$,  
\beno   I_{3,2} & :=&   \sum_{k=1}^3 \int_{\R^3}  (b * \pa_k h) \cdot \nabla \pa_k h \,  \mu \, dv \\
 &=&   \sum_{k=1}^3    \sum_{i=1}^3  \bigg( - \int_{\R^3} (\pa_i b_i *  \pa_k h) \, \pa_k h \, \mu dv  - \int_{\R^3}  (b_i *  \pa_k h) \, \pa_k h \, \pa_i \mu dv  \bigg)    \\
&=& \sum_{k=1}^3\bigg( 8\pi\int_{\R^3} \mu|\pa_kh|^2 \,dv+\int_{\R^3}(b*h)\cdot(\pa_k h\pa_k \nabla\mu+\pa^2_k h \nabla \mu)dv\bigg).
\eeno

\item $I_4 := \sum_{k=1}^3 \int_{\R^3} Q(h, \partial_k \mu)\,  \partial_k h\, dv $. We  also write 
\begin{equation} \label{i4}
I_4 := - I_{4,1} + I_{4,2},
\end{equation}
where
$$  I_{4,1}  :=   \sum_{k=1}^3 \int_{\R^3}  (a * h) : \nabla \pa_k h \otimes \nabla \pa_k \mu \, dv,\quad   
I_{4,2}  :=   \sum_{k=1}^3 \int_{\R^3}  (b * h) \cdot \nabla \pa_k h \,  \pa_k \mu \, dv   .  $$
\end{enumerate}
\medskip
 
In subsections \ref{sub21} to \ref{sub25}, the {\it{a priori}} estimates are proven as if the considered functions are smooth and quickly decaying (when $|v| \to \infty$). They are used later for solutions of an approximated problem which satisfy those properties.

\subsection{Coercivity estimate for $I_{1,1}$}

  In order to treat the term $I_{1,1}$, we prove the following (rather classical) coercivity estimate:

\begin{prop} \label{coercivity}
For all $j\in \{1,2,3\}$, $m \in \R$, $f \ge 0$, $p\in  W^{1,1}_{loc}(\R^3)$, we have 
\begin{equation}\label{rr}
 \int_{\R^3} |\nabla p(v)|^2 \, \langle v\rangle^{m-3} \,dv \le  4\, \|f\|_{L^1_5(\R^3)}\,  \, A_j(f)^{-2} 
\end{equation}
$$ \times\, \int_{\R^3} \int_{\R^3} |v-v_*|^{-3} \,
\bigg\{ |v-v_*|^2 - (v-v_*) \otimes (v-v_*) \bigg\} :  \nabla p(v) \otimes \nabla p(v) \, f(v_*)\, \langle v\rangle^m  \, dv dv_* 
$$
where $A_j(f):=\int_{\R^3} fv_j^2dv$.
\end{prop}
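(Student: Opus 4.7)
The plan is to establish a pointwise-in-$v$ coercivity estimate via a weighted Cauchy--Schwarz applied to $A_j(f)$, then integrate against $\langle v\rangle^{m-3}\,dv$. As a preliminary, simplify the matrix contraction on the right-hand side by the identity
\[
 \{|v-v_*|^2\,\mathrm{Id}-(v-v_*)\otimes(v-v_*)\}:\nabla p(v)\otimes\nabla p(v) = |(v-v_*)\times\nabla p(v)|^2,
\]
so that the inequality to prove becomes
\[
\int |\nabla p|^2 \langle v\rangle^{m-3}\,dv \;\le\; \frac{4\|f\|_{L^1_5}}{A_j(f)^2}\int\!\!\int \frac{|(v-v_*)\times\nabla p(v)|^2}{|v-v_*|^3}f(v_*)\,\langle v\rangle^m\,dv\,dv_*.
\]

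The main step is to prove the pointwise bound
\[
|\nabla p(v)|^2 \,A_j(f)^2 \;\le\; 4\|f\|_{L^1_5}\,\langle v\rangle^3\, R(v), \qquad R(v):=\int \frac{|(v-v_*)\times\nabla p(v)|^2}{|v-v_*|^3}f(v_*)\,dv_*,
\]
which, multiplied by $\langle v\rangle^{m-3}$ and integrated in $v$, yields the lemma. For this, fix $v$, set $\xi := \nabla p(v)$, and apply Cauchy--Schwarz to $A_j(f)=\int f(v_*)v_{*,j}^2\,dv_*$ with the splitting
\[
f(v_*)v_{*,j}^2 = \Big[f^{1/2}(v_*)\tfrac{|(v-v_*)\times\xi|}{|v-v_*|^{3/2}|\xi|}\Big]\cdot\Big[f^{1/2}(v_*)v_{*,j}^2\tfrac{|v-v_*|^{3/2}|\xi|}{|(v-v_*)\times\xi|}\Big],
\]
which gives $|\xi|^2 A_j(f)^2 \le R(v)\cdot Q(v,\xi)$ with
\[
Q(v,\xi):=\int f(v_*)\,v_{*,j}^4\,\frac{|v-v_*|^3|\xi|^2}{|(v-v_*)\times\xi|^2}\,dv_*.
\]

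The hardest part will be showing $Q(v,\xi)\le 4\|f\|_{L^1_5}\langle v\rangle^3$. The integrand of $Q$ has a singularity on the line $v_*\in v+\R\xi$, and a pointwise bound uniform in $\xi$ cannot hold: for $f$ concentrated near such a line, $Q$ is infinite. The saving feature is that $\xi=\nabla p(v)$ is the value of a gradient and must vary in a curl-free way with $v$; this structural constraint prevents pathological alignments on sets of positive measure. A rigorous implementation would decompose the $v_*$ integration into a tubular neighborhood of the line $v+\R\xi$, where one exploits Peetre's inequality $|v-v_*|\le 2\langle v\rangle\langle v_*\rangle$ together with the moment $\|f\|_{L^1_5}$ to estimate the contribution, and its complement, where a quantitative lower bound of the form $|(v-v_*)\times\xi|\gtrsim |\xi||v-v_*|$ tames the integrand. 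Tracking the $\xi$-dependence carefully throughout this singularity analysis, and closing the argument via a Fubini exchange between $v$ and $v_*$ that makes use of the gradient structure of $\nabla p$, is where I anticipate the principal technical difficulty of the proof.
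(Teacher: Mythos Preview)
Your Cauchy--Schwarz splitting is the wrong one, and the ``fix'' you propose cannot work. The quantity
\[
Q(v,\xi)=\int f(v_*)\,v_{*,j}^4\,\frac{|v-v_*|^{3}|\xi|^{2}}{|(v-v_*)\times\xi|^{2}}\,dv_*
\]
is genuinely infinite whenever $f$ has any mass on (or even charges a neighborhood of) the line $v+\R\xi$; the singularity $|(v-v_*)\times\xi|^{-2}$ is not locally integrable on $\R^3$. No appeal to the curl-free nature of $\nabla p$ can save this: the proposition is stated for arbitrary $p\in W^{1,1}_{\mathrm{loc}}$, so at almost every $v$ the vector $\xi=\nabla p(v)$ is unconstrained, and the desired pointwise bound $Q(v,\xi)\le 4\|f\|_{L^1_5}\langle v\rangle^3$ is simply false. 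The Fubini/``gradient structure'' argument you sketch is a red herring.

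The paper's device avoids introducing any singularity in the first place. One rotates coordinates so that the second-moment matrix of $f$ is diagonal, $\int f\,v_{*i}v_{*j}\,dv_*=\delta_{ij}A_i(f)$. Setting $q_{ij}(v,v_*)=(v_i-v_{*i})\partial_j p(v)-(v_j-v_{*j})\partial_i p(v)$ (a component of $(v-v_*)\times\nabla p(v)$), the vanishing of the off-diagonal second moments yields the exact identity
\[
\int_{\R^3} q_{ij}(v,v_*)\,f(v_*)\,v_{*j}\,dv_* \;=\; A_j(f)\,\partial_i p(v)\qquad (i\neq j).
\]
Now apply Cauchy--Schwarz to \emph{this} representation, inserting the weight $|v-v_*|^{-3}\langle v\rangle^{m}$ and its reciprocal. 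One factor produces exactly the desired double integral (since $q_{ij}^2\le |(v-v_*)\times\nabla p|^2$), while the other factor is
\[
\int_{\R^3} f(v_*)\,|v-v_*|^{3}\,|v_{*j}|^{2}\,\langle v\rangle^{-3}\,dv_* \;\le\; 4\,\|f\|_{L^1_5},
\]
which has no singularity at all. The whole point is that $q_{ij}$ already \emph{is} (a component of) the cross product, so the Cauchy--Schwarz split never needs to divide by $|(v-v_*)\times\xi|$.
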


\begin{proof}
We denote
$$ q_{ij}(v,v_*) = (v_i - v_{*i})\, \pa_j p(v) -  (v_j - v_{*j})\, \pa_i p(v). $$
 As observed in~\cite{TV},  by choosing a suitable system of coordinates, we can assume that
 $\int_{\R^3} f(v)\, v_{i}v_{j}dv=\delta_{ij}A_{i}(f)$. Then 
$$ \int_{\R^3} q_{ij}(v,v_*)  \, f(v_*)\, v_{*j}\,dv_* = A_j(f)\, \pa_i p(v), $$
and for all $n\in \R$, $i,j \in \{1,2,3\}$, $i\neq j$, thanks to Cauchy-Schwarz inequality,
$$  A_j(f)^2\, \int_{\R^3} |\pa_i p(v)|^2\, \langle v\rangle^n \, dv \le \int_{\R^3} \langle v\rangle^n \bigg\{ \int_{\R^3} q_{ij}(v,v_*)^2  \, f(v_*)\,  \frac{\langle v\rangle^{m-n}}{|v-v_*|^3} \,dv_* $$
$$ \times \,\,   \int_{\R^3}  f(v_*)\,  |v-v_*|^3\,  |v_{*j}|^2\, \langle v\rangle^{n-m} \,dv_* \bigg\}\, dv . $$
We end up with estimate (\ref{rr}) by using $n= m-3$, the bound
$$  \sup_{v \in \R^3} \int_{\R^3}  f(v_*)\,  |v-v_*|^3\,  |v_{*j}|^2\, \langle v\rangle^{-3} \,dv_* \le 4\,\|f\|_{L^1_5(\R^3)} , $$
 and the identity
$$  \bigg\{ |v-v_*|^2 - (v-v_*) \otimes (v-v_*) \bigg\} :  \nabla p(v) \otimes \nabla p(v) = |(v-v_*) \times \nabla p(v)|^2. $$
\end{proof}

 We now state two corollaries which can easily be obtained from Proposition \ref{coercivity}.

\begin{cor} \label{coercivity2}
Let $f \ge 0$ be such that $\int_{\R^3} f(v) \, dv =1$,   $\int_{\R^3} f(v)\, |v|^2 \, dv =3$, and such that $ \|f\|_{L^1_{5}(\R^3)} +  \| f\|_{L\,\log L} \le K$ for some $K>0$. We denote $h=f - \mu$. Then there exists a constant $C(K)>0$ (depending only on $K$) such that for all $h  \in L^1_{loc}(\R^3)$, $m \in \R$,
\begin{equation}\label{rrbis}
\sum_{k=1}^3 \int_{\R^3}  (a * f) : \nabla \pa_k h \otimes \nabla \pa_k h \, \langle v\rangle^m\, dv 
  \ge C(K) \, ||\nabla^2 h||^2_{L^2_{m/2-3/2}(\R^3)}  . 
\end{equation}

\end{cor}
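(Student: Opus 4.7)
I will derive Corollary \ref{coercivity2} as a direct consequence of Proposition \ref{coercivity}, applied with $p = \partial_k h$ for each $k \in \{1,2,3\}$ and the given weight exponent $m$, and then summed over $k$. The key preparatory observation is that both sides of \eqref{rrbis} transform covariantly under rotations of the velocity variable---the weight $\langle v\rangle^m$, the Landau tensor $a$, and the Euclidean norm of the Hessian are all rotationally invariant. I may therefore work in the coordinate system in which the second moment tensor $\bigl(\int f\, v_i v_j\, dv\bigr)_{ij}$ is diagonal, i.e.\ $\int f\, v_i v_j\, dv = \delta_{ij} A_i(f)$. Under the standing normalization \eqref{f0} (in particular $\int f\, v\, dv = 0$ and $\int f\, |v|^2\, dv = 3$), the trace constraint $A_1(f) + A_2(f) + A_3(f) = 3$ forces some index $j^* \in \{1,2,3\}$ with $A_{j^*}(f) \ge 1$, hence $A_{j^*}(f)^{-2} \le 1$.

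Next, I note the algebraic identity $a(z) = |z|^{-3}\bigl(|z|^2 Id - z \otimes z\bigr)$, which rewrites the right-hand side of \eqref{rr} (with $j = j^*$) as $4\, \|f\|_{L^1_5}\, A_{j^*}(f)^{-2} \int (a * f) : \nabla p \otimes \nabla p\, \langle v \rangle^m\, dv$. Applying Proposition \ref{coercivity} with $j = j^*$ and $p = \partial_k h$ (which lies in $W^{1,1}_{loc}$ by the smoothness of $h$ implicit in the a priori setting) for each $k \in \{1,2,3\}$, using $\|f\|_{L^1_5} \le K$, and summing over $k$ yields
\begin{equation*}
\sum_{k=1}^3 \int_{\R^3} |\nabla \partial_k h|^2\, \langle v\rangle^{m-3}\, dv \le 4K \sum_{k=1}^3 \int_{\R^3} (a * f) : \nabla \partial_k h \otimes \nabla \partial_k h\, \langle v \rangle^m\, dv.
\end{equation*}
The left-hand side coincides with $\|\nabla^2 h\|_{L^2_{m/2 - 3/2}}^2$ by the paper's weighted $L^2$ convention, and rearranging produces \eqref{rrbis} with $C(K) = 1/(4K)$.

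No serious obstacle arises; the proof is essentially a direct plug-in. The only subtle point is securing the lower bound $A_{j^*}(f) \ge 1$, which is why the normalization---and in particular the centering $\int f\, v\, dv = 0$---is crucial: without centering, the second moment tensor cannot be brought to the form $\delta_{ij} A_i(f)$ by a rotation alone, and the identity $\int q_{ij}(v, v_*)\, f(v_*)\, v_{*j}\, dv_* = A_j(f)\, \partial_i p(v)$ used in the proof of Proposition \ref{coercivity} would acquire spurious terms. The $L\log L$ bound on $f$ plays no role in this particular corollary and appears only for uniformity with the paper's standing hypotheses.
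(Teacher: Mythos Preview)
There is a genuine gap. You invoke Proposition \ref{coercivity} with a single index $j = j^*$ chosen so that $A_{j^*}(f) \ge 1$, reading its statement as a bound on the full $\int |\nabla p|^2 \langle v\rangle^{m-3}\,dv$ in terms of $A_j(f)^{-2}$ for \emph{any one} $j$. But if you trace through the proof of that proposition, the Cauchy--Schwarz step yields
\[
A_j(f)^2 \int_{\R^3} |\partial_i p|^2\, \langle v\rangle^{m-3}\, dv \;\le\; 4\,\|f\|_{L^1_5}\int_{\R^3}\int_{\R^3} \frac{q_{ij}^2\, f_*}{|v-v_*|^3}\, \langle v\rangle^{m}\, dv_*\, dv
\]
only for $i \neq j$: indeed $q_{jj} \equiv 0$, so the identity $\int q_{ij}\, f_*\, v_{*j}\, dv_* = A_j(f)\,\partial_i p$ is vacuous at $i = j$. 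A single $j^*$ therefore controls only the two directions $i \neq j^*$; the remaining component $\partial_{j^*} p$ must be handled via some other index $j' \neq j^*$, and that requires a lower bound on $A_{j'}(f)$. The trace constraint $A_1 + A_2 + A_3 = 3$ does not supply this: it permits two of the three $A_j$ to be arbitrarily small, which is exactly what happens when $f$ concentrates near a line through the origin --- a configuration perfectly compatible with the normalization and with $\|f\|_{L^1_5} \le K$.

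This is precisely where the $L\log L$ hypothesis enters, contrary to your closing remark. A bound on $\|f\|_{L\log L}$ excludes concentration on lower-dimensional sets and forces a uniform positive lower bound on \emph{every} $A_j(f)$ in terms of $K$; the paper's one-line proof of the corollary invokes exactly this observation. Your proposed constant $C(K) = 1/(4K)$ therefore cannot be correct, since the true constant must also reflect the non-degeneracy of the second moment tensor coming from the entropy bound.
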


\begin{proof}
We just observe that under the assumption $\int_{\R^3} f(v) \, dv =1$,   $\int_{\R^3} f(v)\, |v|^2 \, dv =3$, and $ \| f\|_{L\,\log L} \le K$, the quantity $A_j(f) $ is bigger than some strictly positive quantity (which depends only on $K$). 
\end{proof}

\begin{cor} \label{coercivity3}
Let $f \ge 0$ be such that $\int_{\R^3} f(v) \, dv =1$,   $\int_{\R^3} f(v)\, |v|^2 \, dv =3$, and such that $ \|f\|_{L^1_{5}(\R^3)} +  \| f\|_{L\,\log L} \le K$ for some $K>0$.  We denote $h=f - \mu$. Then there exist constants $C(K), C^*(K)>0$ depending only on $K$,
such that for all $h  \in L^1_{loc}(\R^3)$, $m \in \R$,
\begin{equation}\label{rrter}
  I_{1,1} \ge  C(K) \,\|\na^2 h\|_{L^2_{-\frac32}}^2  + C(K) \, \|h\|_{L^1_{15/4}}^{-\f45} \, \|\na h\|_{L^2}^{\f{14}5} - C^*(K) \,\|h\|_{L^1}^2.
\end{equation}

\end{cor}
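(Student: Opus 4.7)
The plan is to apply Corollary~\ref{coercivity2}, which furnishes the first (pure dissipation) term of the claimed lower bound, and then to convert half of this dissipation into the interpolated nonlinear quantity $\|h\|_{L^1_{15/4}}^{-4/5}\|\nabla h\|_{L^2}^{14/5}$ by means of a weighted Gagliardo--Nirenberg type inequality, the lower-order term $-C^*(K)\|h\|_{L^1}^2$ absorbing the defect of scaling invariance. More precisely, Corollary~\ref{coercivity2} applied with $m=0$ gives at once
\[
I_{1,1}=\sum_{k=1}^{3}\int_{\R^{3}}(a*f):\nabla\partial_k h\otimes\nabla\partial_k h\,dv \;\ge\; C(K)\,\|\nabla^{2}h\|_{L^{2}_{-3/2}}^{2},
\]
so the task reduces to proving the weighted interpolation
\begin{equation*}
\|\nabla h\|_{L^{2}(\R^{3})}^{14/5} \;\le\; C\,\|h\|_{L^{1}_{15/4}}^{4/5}\,\|\nabla^{2}h\|_{L^{2}_{-3/2}}^{2} \;+\; C\,\|h\|_{L^{1}}^{2}. \qquad(\star)
\end{equation*}
Given $(\star)$, I would split $I_{1,1}\ge \tfrac12 C(K)\|\nabla^2 h\|_{L^2_{-3/2}}^2 + \tfrac12 C(K)\|\nabla^2 h\|_{L^2_{-3/2}}^2$, divide $(\star)$ by $\|h\|_{L^{1}_{15/4}}^{4/5}$, and note that $\|h\|_{L^{1}_{15/4}}\ge \|h\|_{L^{1}}$ together with the uniform mass bound $\|h\|_{L^{1}}\le \|f\|_{L^{1}}+\|\mu\|_{L^{1}}\le 2$ implies $\|h\|_{L^{1}_{15/4}}^{-4/5}\|h\|_{L^{1}}^{2}\le C(K)\|h\|_{L^{1}}^{6/5}\le C^{*}(K)\|h\|_{L^{1}}^{2}$. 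Substituting back yields the claimed bound.

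\paragraph{Why these exponents.}
The shape of $(\star)$ is forced by the classical unweighted Gagliardo--Nirenberg inequality $\|\nabla h\|_{L^{2}}\le C\|h\|_{L^{1}}^{2/7}\|\nabla^{2}h\|_{L^{2}}^{5/7}$ in dimension three (which gives exactly $\|\nabla h\|_{L^{2}}^{14/5}\le C\|h\|_{L^{1}}^{4/5}\|\nabla^{2}h\|_{L^{2}}^{2}$), together with the scaling identity $\tfrac{2}{7}\cdot\tfrac{15}{4}+\tfrac{5}{7}\cdot(-\tfrac{3}{2})=0$, which is precisely the condition for the weight $\langle v\rangle^{-3/2}$ on $\nabla^{2}h$ to be balanced by the moment weight $\langle v\rangle^{15/4}$ on $h$. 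To prove $(\star)$ I would first use integration by parts and the Cauchy--Schwarz splitting $\langle v\rangle^{3/2}\cdot\langle v\rangle^{-3/2}$ to obtain
\[
\|\nabla h\|_{L^{2}}^{2}=-\int h\,\Delta h\,dv \;\le\; \|h\|_{L^{2}_{3/2}}\,\|\nabla^{2}h\|_{L^{2}_{-3/2}},
\]
and then interpolate $\|h\|_{L^{2}_{3/2}}$ between $L^{1}_{15/4}$ and a weighted Sobolev norm: one convenient route is the H\"older split $\int h^{2}\langle v\rangle^{3}dv \le \|h\|_{L^{1}_{15/4}}\,\|h\|_{L^{\infty}_{-3/4}}$ followed by a Sobolev embedding applied to $h\langle v\rangle^{-3/4}$; an alternative is a Littlewood--Paley decomposition $h=P_{\le N}h+P_{>N}h$ in which Bernstein's inequality $\|P_{\le N}h\|_{L^{\infty}}\le CN^{3/2}\|h\|_{L^{1}}$ generates, after optimization in $N$, the additive term $C\|h\|_{L^{1}}^{2}$.

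\paragraph{Main obstacle.}
The delicate part is the weighted inequality $(\star)$ itself. Unlike ordinary Gagliardo--Nirenberg, it cannot be read off from Plancherel, because the weight $\langle v\rangle^{-3/2}$ on $\nabla^{2}h$ breaks Fourier homogeneity: if one attempts a pure frequency split $\|\nabla P_{>N}h\|_{L^{2}}^{2}\le N^{-2}\|\nabla^{2}h\|_{L^{2}}^{2}$ one is left with the \emph{unweighted} $L^{2}$ norm of $\nabla^{2}h$, which dominates the desired weighted norm in the wrong direction. Any rigorous argument therefore has to interlace a Littlewood--Paley (or physical-space) cutoff with moment bounds in such a way that the moment $\|h\|_{L^{1}_{15/4}}$ compensates exactly the factor $\langle v\rangle^{-3/2}$ in the Sobolev norm; the additive $C\|h\|_{L^{1}}^{2}$ is expected to absorb the low-frequency/large-scale failure of scaling invariance inherent to weighted Sobolev inequalities. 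Once $(\star)$ is available the remainder of the proof is mechanical, as explained above.
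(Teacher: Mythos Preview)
Your overall strategy matches the paper's proof exactly: apply Corollary~\ref{coercivity2} with $m=0$ to get $I_{1,1}\ge C(K)\|\nabla^2 h\|_{L^2_{-3/2}}^2$, then convert part of this dissipation via a weighted interpolation inequality. The paper does the second step by invoking Proposition~\ref{interpH1a} from the Appendix (the inequality $\|h\|_{H^1}\le C\|h\|_{L^1_{15/4}}^{2/7}(\|h\|_{L^1_{-3/2}}+\|\nabla^2 h\|_{L^2_{-3/2}})^{5/7}$, proved by a dyadic decomposition in \emph{physical} space), which after raising to the power $14/5$ gives
\[
\|\nabla h\|_{L^2}^{14/5}\le C\,\|h\|_{L^1_{15/4}}^{4/5}\bigl(\|h\|_{L^1}^2+\|\nabla^2 h\|_{L^2_{-3/2}}^2\bigr),
\]
and division by $\|h\|_{L^1_{15/4}}^{4/5}$ yields the claim directly, with remainder exactly $C\|h\|_{L^1}^2$.

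There is, however, a genuine error in your argument. Your formulation of $(\star)$ has the wrong remainder: you wrote $+\,C\|h\|_{L^1}^2$ where the natural interpolation produces $+\,C\|h\|_{L^1_{15/4}}^{4/5}\|h\|_{L^1}^2$. This mismatch then forces you into the false step ``$\|h\|_{L^1}^{6/5}\le C^*(K)\|h\|_{L^1}^2$''. That inequality goes the wrong way when $\|h\|_{L^1}$ is small (indeed $x^{6/5}/x^{2}=x^{-4/5}\to\infty$ as $x\to 0^+$), and this regime is precisely the one of interest for large-time behaviour, where $h\to 0$ in $L^1$. The fix is simply to state the remainder in $(\star)$ as $C\|h\|_{L^1_{15/4}}^{4/5}\|h\|_{L^1}^2$; then after dividing by $\|h\|_{L^1_{15/4}}^{4/5}$ you land on $C\|h\|_{L^1}^2$ with no further manipulation. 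Your own sketches of $(\star)$ would in fact produce the remainder in this correct form --- this is essentially the content of Proposition~\ref{interpH1a} --- so the slip is in the bookkeeping of the additive term rather than in the underlying analysis. As for the ``main obstacle'' you flag, the paper's route around it is exactly the physical-space Littlewood--Paley decomposition $\mathcal{P}_k f(v)=\varphi(2^{-k}v)f(v)$ used in the proof of Proposition~\ref{interpH1a}, which handles the weight transparently.
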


\begin{proof}
Note first  that by taking $m=0$ in Corollary \ref{coercivity2}, we get that 
$$ I_{1,1} \ge C(K) \,  ||\nabla^2 h||^2_{L^2_{-3/2}}.$$
 Then using Proposition \ref{interpH1a}, with $m=0$, we see that for some constant $C>0$,
$$  \|\na h\|_{L^2} \le C\, \|h\|_{L^1_{15/4}}^{\f27}\, (\|h\|_{L^1} + \|\na^2 h\|_{L^2_{-\frac32}})^{5/7}. $$
This inequality implies that (for some constant $C^*>0$)
$$ \|\na^2 h\|_{L^2_{-\frac32}}^2 \ge C \, \|h\|_{L^1_{15/4}}^{-\f45} \, \|\na h\|_{L^2}^{\f{14}5} - C^*\, ||h||_{L^1}^2 , $$
whence estimate (\ref{rrter}).
\end{proof}

\subsection{Estimates for the remainder terms.} \label{sub23}

\subsubsection{Estimates for  $I_3$ and $I_4$}

 \begin{prop}\label{EstI0} Let $f \ge 0$ and $h = f - \mu$.
Then  for all $\eta \in ]0,1[$,
\begin{equation}\label{i3i4}
 I_3 + I_4 \le   C\,\eta^{-1} \, \|h\|_{L^2_2}^2+ C \, \|\nabla h\|_{L^2}^2 + \frac{\eta}4 \, \|\nabla^2 h\|^2_{L^2_{-3/2}},
\end{equation}  
for some absolute constant $C>0$.
 \end{prop}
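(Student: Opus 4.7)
The plan is to bound $I_3+I_4\le |I_{3,1}|+|I_{3,2}|+|I_{4,1}|+|I_{4,2}|$ piece by piece. The unifying principle is that each of these sub-pieces contains a factor involving $\mu$, $\nabla\mu$ or $\nabla^2\mu$, which decays faster than any polynomial; so an arbitrary weight $\langle v\rangle^{3/2}$ may be moved onto this Gaussian factor at no cost, and the complementary $\langle v\rangle^{-3/2}$ placed on the factor involving $\nabla^2 h$ reproduces exactly the norm $\|\nabla^2 h\|_{L^2_{-3/2}}$ of the right-hand side.

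First I would establish two building blocks on convolutions with $a$ and $b$. Splitting the integrand of $\int|v-v_*|^{-1}|g(v_*)|\,dv_*$ at radius $1$ (Cauchy--Schwarz on the inner piece is legitimate since $\int_{|z|\le 1}|z|^{-2}\,dz<\infty$, together with the trivial bound $|z|^{-1}\le 1$ outside) yields the pointwise estimate
\beno
\|a*g\|_{L^\infty(\R^3)}\le C\bigl(\|g\|_{L^2}+\|g\|_{L^1}\bigr)\le C\|g\|_{L^2_2}.
\eeno
For the kernel $|z|^{-2}$, which dominates both $b$ and $\partial_k a$, the same pointwise splitting would produce a $\|g\|_{L^6}$ term, and hence $\|\nabla g\|_{L^2}$ by Sobolev embedding, which is exactly what must be avoided. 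Instead, I would use Hardy--Littlewood--Sobolev for the Riesz potential of order $1$ in $\R^3$ to get
\beno
\bigl\||\cdot|^{-2}*g\bigr\|_{L^2(\R^3)}\le C\|g\|_{L^{6/5}}\le C\|g\|_{L^2_2},
\eeno
the last step being H\"{o}lder with dual exponents $(5/3,5/2)$ together with the integrability of $\langle v\rangle^{-6}$ in $\R^3$. Since integration by parts in $v_*$ gives $a*\partial_k h=(\partial_k a)*h$, both $\|b*h\|_{L^2}$ and $\|a*\partial_k h\|_{L^2}$ are then controlled by $C\|h\|_{L^2_2}$.

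Equipped with these building blocks, each sub-piece is handled by Cauchy--Schwarz in $v$, distributing the Gaussian factor as $|\nabla^j\mu|=\bigl(|\nabla^j\mu|\langle v\rangle^3\bigr)^{1/2}\cdot\bigl(|\nabla^j\mu|\langle v\rangle^{-3}\bigr)^{1/2}$, which places the weight $\langle v\rangle^{-3/2}$ on the $\nabla^2 h$-factor. The pieces $I_{3,1}$, $I_{4,1}$ and $\sum_k\int(b*h)\cdot\partial_k^2 h\,\nabla\mu\,dv$ of $I_{3,2}$ thus reduce to $C\|h\|_{L^2_2}\|\nabla^2 h\|_{L^2_{-3/2}}$ and are split by Young's inequality into $C\eta^{-1}\|h\|_{L^2_2}^2+\tfrac{\eta}{16}\|\nabla^2 h\|^2_{L^2_{-3/2}}$. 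The term $8\pi\sum_k\int\mu|\partial_k h|^2\,dv$ appearing in $I_{3,2}$ is bounded directly by $C\|\nabla h\|_{L^2}^2$ using $\mu\in L^\infty$, and is the sole source of the unweighted $\|\nabla h\|_{L^2}^2$ on the right-hand side. The remaining piece $\sum_k\int(b*h)\cdot\partial_k h\,\partial_k\nabla\mu\,dv$ of $I_{3,2}$, together with $|I_{4,2}|$, reduces by the same Cauchy--Schwarz procedure to $C\|h\|_{L^2_2}\|\nabla h\|_{L^2}$, handled by AM--GM as $C(\|h\|_{L^2_2}^2+\|\nabla h\|_{L^2}^2)$.

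The only real subtlety, and hence the main obstacle, is precisely the need to avoid producing any term of the form $\eta^{-1}\|\nabla h\|_{L^2}^2$: the naive pointwise bound on $b*h$ brings $\|\nabla h\|_{L^2}$ in through Sobolev embedding and would spoil the linear coefficient of $\|\nabla h\|_{L^2}^2$ claimed in the statement. Using the $L^2$ estimate via Hardy--Littlewood--Sobolev together with the Gaussian weight from $\mu$ removes this difficulty. Summing all contributions and choosing the Young parameters so that the total coefficient of $\|\nabla^2 h\|^2_{L^2_{-3/2}}$ equals $\eta/4$ then yields the claimed estimate.
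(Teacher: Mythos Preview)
Your argument is correct and follows the same overall strategy as the paper: exploit the Gaussian decay of $\mu$ to absorb any polynomial weight, place $\langle v\rangle^{-3/2}$ on the second-order derivatives, and control the convolutions $a*h$ and $|\cdot|^{-2}*h$ in a suitable norm by $\|h\|_{L^2_2}$. The paper lumps all the pieces into the single pointwise estimate
$$I_3+I_4\le C\int|\nabla h|^2\mu\,dv+C\iint|v-v_*|^{-2}|h_*|\bigl(|\nabla^2 h|+|\nabla h|\bigr)\mu^{1/2}\,dv_*dv,$$
then controls $\int\bigl(\int|v-v_*|^{-2}|h_*|\,dv_*\bigr)^2\langle v\rangle^3\mu\,dv$ by splitting at $|v-v_*|=1$ and applying Cauchy--Schwarz on each piece. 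Your use of Hardy--Littlewood--Sobolev ($\||\cdot|^{-2}*h\|_{L^2}\le C\|h\|_{L^{6/5}}\le C\|h\|_{L^2_2}$) is a cleaner alternative to the paper's splitting, but the two are essentially interchangeable here.

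One small slip: you group $I_{4,2}=\sum_k\int(b*h)\cdot\nabla\partial_k h\,\partial_k\mu\,dv$ with the terms producing $C\|h\|_{L^2_2}\|\nabla h\|_{L^2}$, but it actually contains $\nabla\partial_k h$ and therefore yields $C\|h\|_{L^2_2}\|\nabla^2 h\|_{L^2_{-3/2}}$, like $I_{3,1}$ and $I_{4,1}$. This is harmless for the final bound (just one more Young split contributing to the $\eta/4$ coefficient), so the proof stands.
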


\begin{proof} 
 Indeed, for some constant $C>0$,
$$  I_3 + I_4 \le    C \int_{\R^3}    |\nabla h|^2  \mu  \,dv +
 C \iint_{\R^6}    |v-v_*|^{-2}  |h_*| (|\nabla^2 h|+|\nabla h|)  \mu^{\f12}  \,dv_*dv $$
$$  \le C \int_{\R^3}    |\nabla h|^2   \,dv  + \frac{\eta}4 \, \|\nabla^2 h\|^2_{L^2_{-3/2}} 
+ C\,\eta^{-1}\, \int_{\R^3}  \bigg( \int_{\R^3}   |v-v_*|^{-2}  |h_*| dv_* \bigg)^2 \, \lr{v}^3 \,\mu\, dv $$
$$ +\, C  \int_{\R^3}  \bigg( \int_{\R^3}   |v-v_*|^{-2}  |h_*| dv_* \bigg)^2 \,  \,\mu\, dv .$$
Then, we see that 
$$  \int_{\R^3}  \bigg( \int_{\R^3}   |v-v_*|^{-2}  |h_*| dv_* \bigg)^2 \, \lr{v}^3 \,\mu\, dv 
\le  2  \int_{\R^3}  \bigg( \int_{  |v-v_*| \le 1 }   |v-v_*|^{-2}  |h_*| dv_* \bigg)^2 \, \lr{v}^3 \,\mu\, dv $$
$$ + \,2  \int_{\R^3}  \bigg( \int_{  |v-v_*| \ge 1 }   |v-v_*|^{-2}  |h_*| dv_* \bigg)^2 \, \lr{v}^3 \,\mu\, dv $$
$$  \le  C \int_{v_* \in \R^3} |h_*|^2 \,  \int_{ |v-v_*| \le 1}    |v-v_*|^{-2}  \, \lr{v}^3 \, \mu \, dv  \,dv_* 
+ 2 \,\bigg( \int_{\R^3} |h_*|\, dv_* \bigg)^2 \,  \int_{\R^3} \lr{v}^3\,  \mu \, dv . $$
We conclude thanks to Cauchy-Schwarz inequality.  
\end{proof}
\bigskip

 Then we observe that for some constant $C>0$,
 \begin{equation}\label{i12i2}
  I_{1,2} + I_{2}  \le  C\, \iint_{\R^6} |v-v_*|^{-2}f_*|\nabla h||\nabla^2 h|dv_*dv+ C\,\int f|\nabla^2h|\, |h| dv ,
\end{equation}
and we define (with the constant $C$ being the same as in the inequality above)
\ben\label{DefI}
\mathcal{I}:= C\, \iint_{\R^6} |v-v_*|^{-2}f_* \,|\nabla h|\, |\nabla^2 h| \, dv_*dv,
\een
and 
\ben\label{DefII}
\mathcal{II} := C\,  \int_{\R^3}  f\, |\nabla^2h|\, |h| \, dv .
\een
 
\subsubsection{Estimate for $\mathcal{I}$.} 

 We state the:

\begin{prop}\label{EstI1} Let $f \ge 0$ (and $h= f - \mu$) such that  $ \|f\|_{L^1_{2}(\R^3)} = 4$ and  $ \|f\|_{L^1_{\tau}(\R^3)} +  
 \|f\|_{L\,\log L} \le K$ for some $K>0$, $\tau >31$.
Then for all $\eta \in ]0,1[$ and $A>1$, the following estimate holds: 
 \beno 
\mathcal{I}&\le& \frac{\eta}2\, \|\nabla^2 h\|_{L^2_{-\f32}}^2+\big(C\,\eta+C(K)\eta^{-1}(\log A)^{-\f{\tau-31}{5\tau}}\big)\big(D(f)+1\big)\|\nabla h\|_{L^2}^{\f{14}5}\\
&& + \, C\, (\eta + \eta^{-1} + \eta^{-3}A^2)\, \|\nabla h\|_{L^2_6}^2 +C(K)\, \eta^{-1}(\log  A)^{-\f{\tau-31}{5\tau}}\|\nabla h\|_{L^2}^2,
 \eeno
where $C(K)>0$ is a constant depending only on $K$, and $C>0$ is an absolute constant. 
 \end{prop}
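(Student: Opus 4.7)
I would decompose the integral $\mathcal{I}$ into three regions according to whether $|v-v_*|\le 1$ or $|v-v_*|>1$, and, in the second case, whether $|v_*|\le A$ or $|v_*|>A$. In each region a Cauchy--Schwarz with a suitable weight $\lr{v}^{\pm 3}$ (or $\lr{v}^{\pm 6}$) is applied to the integrand $|\nabla h|\,|\nabla^2 h|$, reducing the task to bounding $\int_{\text{region}}|v-v_*|^{-2}f_*\,dv_*$ pointwise in $v$. The three regions naturally produce the three main contributions to the stated bound; the entropy dissipation inequality \eqref{Dfdissipation} handles the singular short range, a direct compact-support estimate handles the intermediate range, and a logarithmic interpolation handles the high-velocity tail.

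For the \emph{short range} $|v-v_*|\le 1$, observe that $|z|^{-2}\chi_{|z|\le 1}\in L^{3/2,\infty}(\R^3)$ and that $\lr{v_*}\sim\lr{v}$ on the integration domain. O'Neil's inequality (Proposition~\ref{oneil}) applied to the convolution of $|z|^{-2}\chi_{|z|\le 1}$ with $f\lr{\cdot}^{-3}\in L^{3,1}$ gives
\[
\int_{|v-v_*|\le 1}|v-v_*|^{-2}f_*\,dv_*\le C\,\lr{v}^{3}\|f\|_{L^{3,1}_{-3}},
\]
which by \eqref{Dfdissipation} is bounded by $C\lr{v}^{3}(D(f)+1)$. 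Inserting this into the Cauchy--Schwarz split $|\nabla h||\nabla^2 h|\le\tfrac{\eta}{2}\lr{v}^{-6}|\nabla^2 h|^2+\tfrac{1}{2\eta}\lr{v}^{6}|\nabla h|^2$ yields a contribution of the form $C\eta(D(f)+1)\|\nabla^2 h\|_{L^2_{-3/2}}^2+C\eta^{-1}(D(f)+1)\|\nabla h\|_{L^2_6}^2$. The first summand is then converted via Corollary~\ref{coercivity3} (equivalently Proposition~\ref{interpH1a}) and the moment decay from Theorem~\ref{thm:decay} into the desired pieces $\tfrac{\eta}{2}\|\nabla^2 h\|_{L^2_{-3/2}}^2$ and $C\eta(D(f)+1)\|\nabla h\|_{L^2}^{14/5}$.

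For the \emph{long range, low velocity} region $|v-v_*|>1$ and $|v_*|\le A$, the kernel is bounded by $1$ and $f_*$ is compactly supported of diameter $\sim A$, so the bound $\lr{v}\le (1+A)\lr{v-v_*}$ transfers weights across the convolution. A Cauchy--Schwarz tuned with factors $\eta$, $\eta^{-3}$ absorbs $\tfrac{\eta}{4}\|\nabla^2 h\|_{L^2_{-3/2}}^2$ and produces the $\eta^{-3}A^2\|\nabla h\|_{L^2_6}^2$ term. For the \emph{long range, high velocity} region $|v-v_*|>1$, $|v_*|>A$, I aim to prove the interpolation bound
\[
\|f\chi_{|v|>A}\|_{L^{3,1}_{-3}}\le C(K)(\log A)^{-(\tau-31)/(5\tau)}(D(f)+1),
\]
obtained by balancing the entropy dissipation estimate \eqref{Dfdissipation} against the tail control provided by $\|f\|_{L^1_\tau}\le K$ and the $L\log L$ entropy bound, with the interpolation exponent chosen to be compatible with the $14/5$-power that will appear through Corollary~\ref{coercivity3}. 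A second application of O'Neil's inequality then produces the remaining two terms $C(K)\eta^{-1}(\log A)^{-(\tau-31)/(5\tau)}(D(f)+1)\|\nabla h\|_{L^2}^{14/5}$ and $C(K)\eta^{-1}(\log A)^{-(\tau-31)/(5\tau)}\|\nabla h\|_{L^2}^2$.

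\textbf{Main obstacle.} The hardest ingredient is the sharp logarithmic gain in the tail estimate: applying \eqref{Dfdissipation} globally gives no $A$-dependence, while using only the moment bound $\|f\|_{L^1_\tau}\le K$ yields a polynomial decay in $A$ that cannot be combined with the $\|\nabla h\|_{L^2}^{14/5}$ power in a closed form. The explicit exponent $(\tau-31)/(5\tau)$ reflects the precise balance between these two pieces of information, and the constraint $\tau>31$ is exactly what guarantees its positivity; getting this balance right (and verifying that all terms can be recombined into the four contributions of the statement without spurious cross-terms) is the most subtle step of the proof.
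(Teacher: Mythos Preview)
Your plan has a genuine gap in the short-range term, and your $A$-cutoff is of the wrong type and sits in the wrong place.

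\textbf{Short range.} After your pointwise bound $\int_{|v-v_*|\le 1}|v-v_*|^{-2}f_*\,dv_*\le C\lr{v}^{3}\|f\|_{L^{3,1}_{-3}}$ and your Young split, you obtain $C\eta(D(f)+1)\|\nabla^2 h\|_{L^2_{-3/2}}^2$. You then claim this can be ``converted via Corollary~\ref{coercivity3} (equivalently Proposition~\ref{interpH1a})'' into $\tfrac{\eta}{2}\|\nabla^2 h\|_{L^2_{-3/2}}^2$ plus $C\eta(D(f)+1)\|\nabla h\|_{L^2}^{14/5}$. This step is wrong: both results give a \emph{lower} bound $\|\nabla^2 h\|_{L^2_{-3/2}}^2\gtrsim \|\nabla h\|_{L^2}^{14/5}$ (up to lower order), not an upper bound. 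You cannot replace $\|\nabla^2 h\|_{L^2_{-3/2}}^2$ by $\|\nabla h\|_{L^2}^{14/5}$ in a term you are trying to bound from above. Since $D(f)$ is not a priori bounded, the coefficient $\eta(D(f)+1)$ in front of $\|\nabla^2 h\|_{L^2_{-3/2}}^2$ cannot be absorbed into $\tfrac{\eta}{2}\|\nabla^2 h\|_{L^2_{-3/2}}^2$, and the argument does not close. The paper avoids this by applying Cauchy--Schwarz to the \emph{$f_*$ factor} first, writing $\mathcal{I}_1\le C\,B_1^{1/2}B_2^{1/2}$ with $B_1$ carrying the weight $\lr{v_*}^{-3}$ and $B_2$ the weight $\lr{v_*}^{6}$; only then is Young applied, so that $\|\nabla^2 h\|_{L^2_{-3/2}}^2$ receives a pure $\eta/4$ coefficient. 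The $14/5$ power then enters through the static interpolation $\|f\|_{L^{3,1}_{-3}}\le C(1+\|\nabla h\|_{L^2}^{4/5})$ applied to $\|f\|_{L^{3,1}_{-3}}^2\|\nabla h\|_{L^2}^2$, not through Corollary~\ref{coercivity3}.

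\textbf{The $A$-cutoff.} In the paper the parameter $A$ is a \emph{value} cutoff: one writes $f=f_A+f^A$ with $f_A=f\chi(f/A)$, so that $f_A\le CA$ pointwise and $f^A$ is supported on $\{f\gtrsim A\}$. The logarithmic gain comes precisely from $\int_{\{f\ge A\}}f\le (\log A)^{-1}\int f\log^+ f$, which, combined with the moment bound $\|f\|_{L^1_\tau}\le K$ and the interpolation $\|f^A\|_{L^{3,1}_6}\le C\|f^A\|_{L^1_{31}}^{1/5}\|f^A\|_{H^1}^{4/5}$, yields $\|f^A\|_{L^{3,1}_6}\le C(K)(\log A)^{-(\tau-31)/(5\tau)}(1+\|\nabla h\|_{L^2}^{4/5})$. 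This split is performed inside the \emph{short-range} term $B_2$, not in the long range. Your velocity cutoff $|v_*|>A$ in the long-range region cannot produce a logarithmic factor: moments give only polynomial decay $A^{-\tau}$, and the $L\log L$ bound says nothing about velocity tails. Moreover the long-range region $|v-v_*|\ge 1$ is handled trivially in the paper with $\|f\|_{L^1_2}$ alone, yielding $\tfrac{\eta}{4}\|\nabla^2 h\|_{L^2_{-2}}^2+C\eta^{-1}\|\nabla h\|_{L^2}^2$; no $A$ is needed there at all. Finally, the appeal to Theorem~\ref{thm:decay} is out of place: this proposition is a static functional inequality with no time variable.
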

\begin{proof} 
We  write $\mathcal{I} = \mathcal{I}_1  + \mathcal{I}_2$, with 
\ben\label{DefI12}
\mathcal{I}_1:=\iint_{|v-v_*|\le 1} |v-v_*|^{-2}f_* \,|\nabla h|\, |\nabla^2 h| \, dv_*dv, \\ \mathcal{I}_2:=\iint_{|v-v_*|\ge 1} |v-v_*|^{-2}f_* \,|\nabla h|\, |\nabla^2 h| \, dv_*dv. \nonumber 
\een
We see that
\begin{equation}\label{bi2}
\begin{split}
\mathcal{I}_2&\le C\iint   \lr{v-v_*}^{-2}f_*|\nabla h| |\nabla^2 h|dv_*dv\\
&\le C \iint   \lr{v}^{-2}\lr{v_*}^{2}f_*|\nabla h| |\nabla^2 h|dv_*dv\\
&\le
 C\, \|f\|_{L^1_2}\|\na h\|_{L^2}\|\na^2 h\|_{L^{2}_{-2}} \\
&\le
 \frac{\eta}4 \, \|\nabla^2 h\|_{L^2_{-2}}^2 + C\, \eta^{-1}\, \|\na h\|_{L^2}^2 .
\end{split}
 \end{equation}
\medskip

We now turn to  $\mathcal{I}_1$. We note first  that in the region $\{|v-v_*|\le1\}$ 
holds the estimate  $ \frac1{\sqrt{3}}\, \langle v\rangle \le \langle v_*\rangle  \le \sqrt{3} \, \langle v\rangle$. Thus by Cauchy-Schwartz inequality, we have
\beno
\mathcal{I}_1&\le& C\, \bigg(\underbrace{\iint_{|v-v_*|\le 1} |v-v_*|^{-2}(f_*\langle v_*\rangle^{-3})|\nabla h| |\nabla^2 h\langle v\rangle^{-\f32}|dv_*dv}_{\eqdefa B_1}\bigg)^{\f12}\\&&\times\bigg(\underbrace{\iint_{|v-v_*|\le 1} |v-v_*|^{-2}(f_*\langle v_*\rangle^{6})|\nabla h| |\nabla^2 h\langle v\rangle^{-\f32}|dv_*dv}_{\eqdefa B_2}.\bigg)^{\f12}
 \eeno
We observe that for $k\in ]0,3[$, $ 1_{\{|\cdot| \le 1\}} \, |\cdot|^{-k}\in L^{3/k,\infty}$. Therefore, we get
\beno  B_1\le C\, \|f\|_{L^{3,1}_{-3}}\|\na h\|_{L^2}\|\na^2h\|_{L^2_{-\f32}}, \eeno
thanks to O'Neil inequality (cf. Proposition~\ref{oneil} in the Appendix). 
 
Concerning $B_2$ (and for any $A>1$), we split $f$ into two parts: $f_A=f\chi(f/A)$ and $f^A:=f-f_A$, where $\chi$ is a nonnegative $C^1$ function satisfying that $\chi=1$ in $B_1$  and $\chi=0$ outside of $B_2$. Thanks to this decomposition, we get
\beno 
B_2\le C\,\|f^A\|_{L^{3,1}_{6}}\|\na h\|_{L^2}\|\na^2h\|_{L^2_{-\f32}}+ C\,A\|\na h\|_{L^2_6}\|\na^2h\|_{L^2_{-\f32}}.
\eeno
Here we use again O'Neil inequality (cf. once again Proposition~\ref{oneil} in the Appendix) for $f^A$, and the bound $f_A\le C\,A$.

Putting together the estimates for $B_1$ and $B_2$ yields 
$$ \mathcal{I}_1 \le C \, \big(\|f\|_{L^{3,1}_{-3}}^{\f12}\|f^A\|_{L^{3,1}_6}^{\f12}\|\na h\|_{L^2}\|\na^2h\|_{L^2_{-\f32}}+A^{\f12}\|f\|_{L^{3,1}_{-3}}^{\f12}\|\na h\|_{L^2}^{\f12}\|\na h\|_{L^2_6}^{\f12}\|\na^2h\|_{L^2_{-\f32}}\big) $$
\begin{equation} \label{bi1}  
\le \frac{\eta}4\,  \|\na^2h\|_{L^2_{-\f32}}^2+\eta \|f\|_{L^{3,1}_{-3}}^2\|\na h\|_{L^2}^2+ C\, \big(\eta^{-1}\|f\|_{L^{3,1}_{-3}}\|f^A\|_{L^{3,1}_6}\|\na h\|_{L^2}^2+\eta^{-3}A^2\|\na h\|_{L^2_6}^2\big).
\end{equation}
In what follows, we estimate  the quantities $ \|f\|_{L^{3,1}_{-3}}^2\|\na h\|_{L^2}^2$ and $\|f\|_{L^{3,1}_{-3}}\|f^A\|_{L^{3,1}_6}\|\na h\|_{L^2}^2$.
\bigskip

\underline{\it Estimate of $\|f\|_{L^{3,1}_{-3}}^2\|\na h\|_{L^2}^2$:}  Remembering that $\|f\|_{L^1_2} = 4$ and using Proposition \ref{interpL31},
we get the estimate
\ben\label{f31e}  \|f\|_{L^{3,1}_{-3}}\le C\, \|f\lr{\cdot}^{-3}\|_{H^1}^{\f45}\le C\, (\|\na h\|_{L^2}^{\f45}+1),\een
where we used the interpolation estimate
 \begin{equation} \label{ies}
\|h\|_{L^2}\le C\,(\|h\|_{L^1}+\|\na h\|_{L^2})\le C\, (1+\|\na h\|_{L^2}).
\end{equation}
It yields 
\ben\label{aut}   \|f\|_{L^{3,1}_{-3}}^2\|\na h\|_{L^2}^2&\le&   C\,(\|\na h\|_{L^2}^{\f45}+1)\|f\|_{L^{3,1}_{-3}}\|\na h\|_{L^2}^2.\een  
 We end up with the bound
\beno
 \|f\|_{L^{3,1}_{-3}}^2\|\na h\|_{L^2}^2&\le& C\, ( \|f\|_{L^{3,1}_{-3}}\|\na h\|_{L^2}^{\f{14}{5}}+\|f\|_{L^{3,1}_{-3}}\|\na h\|_{L^2}^2)\\
&\le&
 C\, ( D(f)\|\na h\|_{L^2}^{\f{14}5}+\|\na h\|_{L^2}^{\f{14}5}+\|\na h\|_{L^2}^2),
\eeno
 where we used the estimates \eqref{Dfdissipation} for the first term, and \eqref{f31e} for the second term.
\bigskip

\underline{\it Estimate of $\|f\|_{L^{3,1}_{-3}}\|f^A\|_{L^{3,1}_6}\|\na h\|_{L^2}^2$.} Thanks to the definition of $f^A$ and the interpolation estimate (\ref{ies}), we first see that
\begin{equation}\label{faabi} 
 \|f^A\|_{H^1}\le C\, \sqrt{1+A^{-2}}\, (\|\na h\|_{L^2}+1).
\end{equation}
For $R>0$ and $A>1$, we  know that 
$$\|f^A\|_{L^1_{31}}\le R^{31}(\log A)^{-1}\, \int_{f \ge 1} f\, \log f \, dv +R^{-(\tau-31)}\|f\|_{L^1_\tau}, $$
 so that  
\begin{equation}\label{faa}
 \|f^A\|_{L^1_{31}}\le C(K)(\log A)^{-\f{\tau-31}{\tau}}. 
\end{equation}
Using Proposition \ref{interpL31}, one gets
$$ \|f\|_{L^{3,1}_{-3}}\|f^A\|_{L^{3,1}_6}\|\na h\|_{L^2}^2\le  C\,\|f\|_{L^{3,1}_{-3}}\|f^A\|_{L^1_{31}}^{\f15}\|f^A\|_{H^1}^{\f45}\|\na h\|_{L^2}^2 . $$
Then, using estimates (\ref{faabi}) and (\ref{faa}), 
   \begin{equation}\label{aut2}
 \|f\|_{L^{3,1}_{-3}}\|f^A\|_{L^{3,1}_6}\|\na h\|_{L^2}^2\le   C(K)\,(\log A)^{-\f{\tau-31}{5\tau}} \|f\|_{L^{3,1}_{-3}}\big(\|\na h\|_{L^2}^{\f{14}5}+\|\na h\|_{L^2}^2\big). 
\end{equation}
Finally, using both estimates (\ref{Dfdissipation}) and (\ref{f31e}), we end up with 
\begin{equation}\label{nez}
 \|f\|_{L^{3,1}_{-3}}\|f^A\|_{L^{3,1}_6}\|\na h\|_{L^2}^2\le       C(K)\,(\log A)^{-\f{\tau-31}{5\tau}} \bigg[(D(f)+1)\|\na h\|_{L^2}^{\f{14}5}+\|\na h\|_{L^2}^2\bigg].  
\end{equation}
Finally, we see that 
 \begin{equation}\label{esti1}
  \mathcal{I}_1 \le  \frac{\eta}4\,  \|\na^2h\|_{L^2_{-\f32}}^2 + C\,\eta^{-3}A^2\|\na h\|_{L^2_6}^2 + C\, \eta\,  ( D(f)\|\na h\|_{L^2}^{\f{14}5}+\|\na h\|_{L^2}^{\f{14}5}+\|\na h\|_{L^2}^2)
\end{equation}
$$   
 +\,  C(K)\, \eta^{-1}\, (\log A)^{-\f{\tau-31}{5\tau}} \bigg[(D(f)+1)\|\na h\|_{L^2}^{\f{14}5}+\|\na h\|_{L^2}^2\bigg] $$
$$ \le \frac{\eta}4\, \|\nabla^2 h\|_{L^2_{-\f32}}^2+\big(C\,\eta+C(K)\eta^{-1}(\log A)^{-\f{\tau-31}{5\tau}}\big)\big(D(f)+1\big)\|\nabla h\|_{L^2}^{\f{14}5}$$
$$ + \,C\, (\eta + \eta^{-3}A^2)\, \|\nabla h\|_{L^2_6}^2 +C(K)\eta^{-1}(\log A)^{-\f{\tau-31}{5\tau}}\|\nabla h\|_{L^2}^2 . $$
We deduce the desired result by combining this estimate with the estimate for $ \mathcal{I}_2$.
\end{proof}

\subsubsection{Estimate for $\mathcal{II}$} 

We now prove  the following bound:
\begin{prop}\label{EstII}  Consider $f \ge 0$  (and $h= f - \mu$) such that  $ \|f\|_{L^1_{2}(\R^3)} = 4$ and  $ \|f\|_{L^1_{\tau}(\R^3)} +   \|f\|_{L\log L} \le K$ for some $K>0$, $\tau > 31$.
Then for all $\eta \in ]0,1[$ and $A>1$, the following estimate holds: 
 \beno 
\mathcal{II}&\le& \frac{\eta}4\,\|\nabla^2 h\|_{L^2_{-\f32}}^2+ C(K)\,\eta^{-1}(\log A)^{-\f{\tau-31}{5\tau}} \big(D(f)+1\big)\|\nabla h\|_{L^2}^{\f{14}5}+ C\, \eta^{-1}A^2\|h\|_{L^2_{\f32}}^2\\
&&+\, C(K)\, \eta^{-1}(\log A)^{-\f{\tau-31}{5\tau}}\|\nabla h\|_{L^2}^2,
 \eeno
where $C(K)>0$ only depend on $K$ (and $\tau$), and $C>0$ only depends on $\tau$.
\end{prop}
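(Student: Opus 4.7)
The argument parallels the proof of Proposition \ref{EstI1} with significant simplification due to the absence of the singular factor $|v-v_{*}|^{-2}$ in the integrand defining $\mathcal{II}$. I start by decomposing $f=f_A+f^A$ using the same truncation $f_A:=f\,\chi(f/A)$ as before, so that $f_A\le 2A$ pointwise and $f^A$ is supported where $f\ge A$; this yields
\beno
\mathcal{II}\le C\int f_A\,|h|\,|\nabla^2 h|\,dv \;+\; C\int f^A\,|h|\,|\nabla^2 h|\,dv.
\eeno

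For the $f_A$ piece, I exploit the pointwise bound $f_A\le 2A$ combined with Cauchy--Schwarz (using the weight split $\langle v\rangle^0=\langle v\rangle^{3/2}\cdot\langle v\rangle^{-3/2}$) and Young's inequality to produce a contribution bounded by $\tfrac{\eta}{8}\|\nabla^2 h\|_{L^2_{-3/2}}^2+C\eta^{-1}A^2\|h\|_{L^2_{3/2}}^2$, accounting for the middle term of the announced estimate. For the $f^A$ piece, I apply the trilinear Hölder inequality in the Lorentz spaces $L^{3,1}$, $L^{6,2}$, $L^{2}$ (with $1/3+1/6+1/2=1$), justified by the O'Neil inequality (Proposition \ref{oneil}), distributing the weight $\langle v\rangle^0$ as $\langle v\rangle^{6}\cdot\langle v\rangle^{-9/2}\cdot\langle v\rangle^{-3/2}$. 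The weight $-3/2$ is placed on $\nabla^2 h$ to match the coercivity; the negative weight $-9/2$ on $h$ is harmless since $\langle v\rangle^{-9/2}\le 1$, so together with the precised Sobolev embedding $\|h\|_{L^{6,2}}\le C\|\nabla h\|_{L^2}$ from~\cite{AdFu} this yields
\beno
C\int f^A\,|h|\,|\nabla^2 h|\,dv \;\le\; C\,\|f^A\|_{L^{3,1}_6}\,\|\nabla h\|_{L^2}\,\|\nabla^2 h\|_{L^2_{-3/2}},
\eeno
and Young's inequality then leaves the term $C\eta^{-1}\|f^A\|_{L^{3,1}_6}^2\|\nabla h\|_{L^2}^2$ to control.

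It remains to estimate this last expression along the lines of the derivation of~\eqref{nez} in Proposition \ref{EstI1}. The interpolation estimate (Proposition \ref{interpL31}) together with~\eqref{faabi}--\eqref{faa} gives $\|f^A\|_{L^{3,1}_6}\le C(K)\,(\log A)^{-(\tau-31)/(5\tau)}\,(\|\nabla h\|_{L^2}+1)^{4/5}$, so one factor of $\|f^A\|_{L^{3,1}_6}$ provides the logarithmic smallness, while the remaining factor multiplied by $\|\nabla h\|_{L^2}^2$ is handled via the dissipation estimates $\|f\|_{L^{3,1}_{-3}}\le C(D(f)+1)$ from~\eqref{Dfdissipation} and $\|f\|_{L^{3,1}_{-3}}\le C(\|\nabla h\|_{L^2}^{4/5}+1)$ from~\eqref{f31e}, exactly as in the passage~\eqref{aut2}--\eqref{nez}. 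This reproduces the factor $(D(f)+1)\|\nabla h\|_{L^2}^{14/5}$ together with the lower-order term $\|\nabla h\|_{L^2}^2$, both modulated by $(\log A)^{-(\tau-31)/(5\tau)}$. The main technical delicacy is the bookkeeping of weighted Lorentz indices: the exponent $6$ on $f^A$ is forced both by the hypothesis $\tau>31$ inherited from Proposition~\ref{EstI1} and by the requirement to leave $\nabla h$ unweighted on the right-hand side, and it is precisely this choice which allows the simultaneous emergence of the dissipation factor $(D(f)+1)$ and the logarithmic gain $(\log A)^{-(\tau-31)/(5\tau)}$ in the final bound.
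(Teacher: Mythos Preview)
Your decomposition and the treatment of the $f_A$ piece are fine, but the $f^A$ piece contains a genuine gap. After Young's inequality you arrive at $C\eta^{-1}\|f^A\|_{L^{3,1}_6}^2\,\|\nabla h\|_{L^2}^2$, and you then claim that one factor of $\|f^A\|_{L^{3,1}_6}$ supplies the logarithmic smallness while the other can be handled ``via the dissipation estimates $\|f\|_{L^{3,1}_{-3}}\le C(D(f)+1)$ \dots\ exactly as in the passage \eqref{aut2}--\eqref{nez}''. This does not go through: the passage \eqref{aut2}--\eqref{nez} starts from the product $\|f\|_{L^{3,1}_{-3}}\,\|f^A\|_{L^{3,1}_6}\,\|\nabla h\|_{L^2}^2$, and it is precisely the negatively weighted factor $\|f\|_{L^{3,1}_{-3}}$ that is controlled by $D(f)+1$ through \eqref{Dfdissipation}. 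Your remaining factor is $\|f^A\|_{L^{3,1}_6}$, carrying weight $+6$, and there is no dissipation estimate for that quantity. If you instead bound this second factor by interpolation (without the $\log A$ gain) you obtain $C(K)(\|\nabla h\|_{L^2}+1)^{4/5}$, which after multiplying out yields a term of order $\|\nabla h\|_{L^2}^{18/5}$ rather than $(D(f)+1)\|\nabla h\|_{L^2}^{14/5}$; these are not comparable, and the latter is what the statement requires (and what is essential for the monotonicity formula downstream).

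The fix is a different weight distribution in the trilinear H\"older step: place weight $3/2$ on $f^A$, weight $0$ on $h$, and weight $-3/2$ on $\nabla^2 h$, obtaining (in ordinary Lebesgue spaces) $\|f^A\|_{L^3_{3/2}}\,\|h\|_{L^6}\,\|\nabla^2 h\|_{L^2_{-3/2}}$. After Young this leaves $C\eta^{-1}\|f^A\|_{L^3_{3/2}}^2\,\|\nabla h\|_{L^2}^2$, and now Cauchy--Schwarz in the weight gives $\|f^A\|_{L^3_{3/2}}^2\le \|f^A\|_{L^3_{-3}}\,\|f^A\|_{L^3_6}\le \|f\|_{L^3_{-3}}\,\|f^A\|_{L^3_6}$. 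At this point you genuinely have the product needed to invoke the argument of \eqref{aut2}--\eqref{nez}, and the proof closes. This is exactly what the paper does in \eqref{aut3}.
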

\begin{proof}
 We recall that 
$$ \mathcal{II} \le  \int f \,\pa_i^2h\, h\, dv =  \int f_A\,\pa_i^2h\, h\, dv 
+  \int f^A \,\pa_i^2h\, h\, dv . $$
Then
\ben\label{aut3} 
\mathcal{II}&\le& C\, A\|\na^2 h\|_{L^2_{-\f32}}\|h\|_{L^2_{\f32}}
+ C\, \|f^A\|_{L^3_{\f32}} \, \|\na^{2}h\|_{L^2_{-\f32}} \,  \|h\|_{L^6} \notag  \\
&\le& \frac{\eta}4\,  \|\na^2 h\|_{L^2_{-\f32}}^2
+ C\, \eta^{-1}(A^2\|h\|_{L^2_{\f32}}^2+\|f\|_{L^3_{-3}}\|f^A\|_{L^3_6}\|\na h\|_{L^2}^2).
 \een 
The desired result is obtained by using  an estimate almost identical to that of (\ref{nez}). 
\end{proof}

\subsubsection{Summary of the estimate for the remainder terms}

  We regroup the results of Propositions \ref{EstI0}, \ref{EstI1} and \ref{EstII} in the 
\begin{prop}\label{Esum}  Let $f \ge 0$  (and $h= f - \mu$) such that  $ \|f\|_{L^1_{2}(\R^3)} = 4$ and  $ \|f\|_{L^1_{\tau}(\R^3)} +   \|f\|_{L\log L} \le K$ with $K>0$, $\tau > 31$.
Then for all $\eta \in ]0,1[$ and $A>e$, the following estimate holds: 
 $$
 I_{1,2} + I_2 + I_3 + I_4 \le  \eta\, \|\nabla^2 h\|_{L^2_{-\frac32}}^2 
+ (C\,\eta + C(K)\,\eta^{-1}(\log A)^{-\f{\tau-31}{5\tau}})\, \big(D(f)+1\big)\|\nabla h\|_{L^2}^{\f{14}5}$$
$$+\, C\, \eta^{-1}A^2\|h\|_{L^2_{2}}^2 +\, C(K)\, ( \eta^{-1} + \eta^{-3}\, A^2)\,  \|\nabla h\|_{L^2_6}^2, $$
where $C(K)>0$ only depend on $K$ and $\tau$, and $C>0$ only depends on $\tau$.
\end{prop}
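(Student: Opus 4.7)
The plan is to prove Proposition \ref{Esum} by simply summing the three upper bounds already established in Propositions \ref{EstI0}, \ref{EstI1}, and \ref{EstII}, after first noting that estimate \eqref{i12i2} together with definitions \eqref{DefI} and \eqref{DefII} gives $I_{1,2} + I_2 \le \mathcal{I} + \mathcal{II}$. Thus the task reduces to bookkeeping: add the bound for $I_3+I_4$ from Proposition \ref{EstI0}, the bound for $\mathcal{I}$ from Proposition \ref{EstI1}, and the bound for $\mathcal{II}$ from Proposition \ref{EstII}, and then regroup the resulting terms in the form announced.

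For the coefficient of $\|\nabla^2 h\|_{L^2_{-3/2}}^2$, the three contributions $\eta/4 + \eta/2 + \eta/4 = \eta$ match the target exactly. For the term $(D(f)+1)\|\nabla h\|_{L^2}^{14/5}$, Proposition \ref{EstI1} already supplies $C\eta + C(K)\eta^{-1}(\log A)^{-(\tau-31)/(5\tau)}$, and adding the contribution $C(K)\eta^{-1}(\log A)^{-(\tau-31)/(5\tau)}$ from Proposition \ref{EstII} only changes the constant $C(K)$, so the form is preserved. For the $\|h\|_{L^2_2}^2$ term, Proposition \ref{EstI0} gives $C\eta^{-1}$ and Proposition \ref{EstII} gives $C\eta^{-1}A^2\|h\|_{L^2_{3/2}}^2$; using $A>e>1$ and the monotonicity $\|h\|_{L^2_{3/2}} \le \|h\|_{L^2_2}$, both combine into $C\eta^{-1}A^2\|h\|_{L^2_2}^2$.

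For the remaining weighted gradient terms, Proposition \ref{EstI1} gives $C(\eta + \eta^{-1} + \eta^{-3}A^2)\|\nabla h\|_{L^2_6}^2$; since $\eta \in ]0,1[$, the coefficient $\eta$ is absorbed into $\eta^{-1}$, yielding $C(\eta^{-1} + \eta^{-3}A^2)$. The stray contributions of the form $C\|\nabla h\|_{L^2}^2$ (from Proposition \ref{EstI0}) and $C(K)\eta^{-1}(\log A)^{-(\tau-31)/(5\tau)}\|\nabla h\|_{L^2}^2$ (from Propositions \ref{EstI1} and \ref{EstII}) are all dominated by $C(K)\eta^{-1}\|\nabla h\|_{L^2_6}^2$, because $\|\nabla h\|_{L^2} \le \|\nabla h\|_{L^2_6}$ and the logarithmic factor is less than one for $A > e$; thus they are absorbed into the $\|\nabla h\|_{L^2_6}^2$ term on the right-hand side.

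There is no real obstacle here: each step is an inequality between positive quantities, the choice $A>e$ (rather than just $A>1$) ensures $\log A > 0$ so that the negative powers of $\log A$ are harmless and less than one, and the assumption $\eta \in ]0,1[$ lets us collapse $\eta^p$ with $p\ge 1$ into larger $\eta^{-q}$ factors when convenient. After one careful pass to check signs and to verify that every term on the right-hand sides of the three input propositions has been placed into the correct bucket of the target inequality, the proposition follows.
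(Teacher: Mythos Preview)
Your proof is correct and follows exactly the same approach as the paper, which simply states that the estimate is directly obtained from Propositions \ref{EstI0}, \ref{EstI1} and \ref{EstII}, remembering that $\eta <1$, $\log A >1$ and $-\frac{\tau-31}{5\tau} <0$. You have merely spelled out the bookkeeping (coefficients of $\|\nabla^2 h\|_{L^2_{-3/2}}^2$, absorption of $\|\nabla h\|_{L^2}^2$ into $\|\nabla h\|_{L^2_6}^2$, etc.) that the paper leaves implicit.
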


\begin{proof}
This estimate is directly obtained from  Propositions \ref{EstI0}, \ref{EstI1} and \ref{EstII}, remembering that $\eta <1$, $\log A >1$ and $-\f{\tau-31}{5\tau} <0$.
\end{proof}

\subsubsection{Summary of the estimate for all terms}

 We now regroup the results of Proposition \ref{Esum} and Corollary  \ref{coercivity3}. From now on, we typically denote by $C^*$ constants which can be replaced by 
a larger constant, and by $C$ constants which can be replaced by a smaller (strictly positive) constant. We get the
\begin{prop}\label{Esum2}  Let $f \ge 0$  (and $h= f - \mu$) such that $\int_{\R^3} f(v) \, dv =1$,   $\int_{\R^3} f(v)\, |v|^2 \, dv =3$, and  $ \|f\|_{L^1_{\tau}(\R^3)} +   \|f\|_{L\log L} \le K$ with $\tau > 31$.
Then for all $\eta \in ]0,1[$, the following estimate holds: 
 \begin{equation} \label{qqq}
 I_1 + I_2 + I_3 + I_4 \le  -\f{C(K)}2 \,\|\na^2 h\|_{L^2_{-\frac32}}^2  -\,\frac{C(K)}{2} \, \|h\|_{L^1_{15/4}}^{-\f45} \, \|\na h\|_{L^2}^{\f{14}5} 
\end{equation}
$$  +\,  C^*(K)\,  \eta^{-13}\,\exp( 7 \,\eta^{- \frac{10 \tau}{\tau - 31}})\, \|h\|_{L^1_{99/4}}^{2}  + C^*(K)\,\eta\, (1 + D(f))\, \|\nabla h\|_{L^2}^{\f{14}5} , $$
where $C(K), C^*(K)>0$ only depend on $K$ and $\tau$.
\end{prop}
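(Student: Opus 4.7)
The plan is to add the bound on $I_{1,2}+I_2+I_3+I_4$ given by Proposition \ref{Esum} to the bound on $-I_{1,1}$ obtained by reversing Corollary \ref{coercivity3} (recalling $I_1=-I_{1,1}+I_{1,2}$). This already produces both announced \emph{good} terms $-C(K)\|\nabla^2 h\|_{L^2_{-3/2}}^2$ and $-C(K)\|h\|_{L^1_{15/4}}^{-4/5}\|\nabla h\|_{L^2}^{14/5}$, together with the targeted $C^*(K)\eta(1+D(f))\|\nabla h\|_{L^2}^{14/5}$. The price is four \emph{remainder} contributions to be absorbed: (a) $\eta\|\nabla^2 h\|_{L^2_{-3/2}}^2$, (b) $C(K)\eta^{-1}(\log A)^{-(\tau-31)/(5\tau)}(D(f)+1)\|\nabla h\|_{L^2}^{14/5}$, (c) the weighted quantities $C\eta^{-1}A^2\|h\|_{L^2_2}^2+C(K)(\eta^{-1}+\eta^{-3}A^2)\|\nabla h\|_{L^2_6}^2$, and (d) a harmless $C^*(K)\|h\|_{L^1}^2$. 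Item (a) is dealt with by restricting $\eta$ so that $\eta\le C(K)/2$, which leaves $-\tfrac{C(K)}{2}\|\nabla^2 h\|_{L^2_{-3/2}}^2$.

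To neutralize the logarithmic factor in (b), I choose $A$ as a function of $\eta$ by
\[
A := \exp\!\bigl(\eta^{-10\tau/(\tau-31)}\bigr),
\]
so that $(\log A)^{-(\tau-31)/(5\tau)}=\eta^{2}$ and the coefficient in (b) becomes $C(K)\eta$, which is absorbed into $C^*(K)\eta(D(f)+1)\|\nabla h\|_{L^2}^{14/5}$. This choice also fixes $A^{2}=\exp(2\eta^{-10\tau/(\tau-31)})$ and $A^{7}=\exp(7\eta^{-10\tau/(\tau-31)})$, the latter matching exactly the exponential factor announced in \eqref{qqq}. Note that $A>e$ holds automatically for $\eta\in(0,1)$ and $\tau>31$.

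The crux is to absorb (c) into the dissipation. A weighted Gagliardo--Nirenberg inequality in the spirit of Proposition \ref{interpH1a}, with interpolation parameters $\theta=5/7$ and $\theta=3/7$ and the weight-balance $(1-\theta)m_1+\theta m_2=m$, yields
\[
\|\nabla h\|_{L^2_6}^2 \le C\,\|h\|_{L^1_{99/4}}^{4/7}\bigl(\|h\|_{L^1}+\|\nabla^2 h\|_{L^2_{-3/2}}\bigr)^{10/7},
\]
\[
\|h\|_{L^2_2}^2 \le C\,\|h\|_{L^1_{99/4}}^{8/7}\bigl(\|h\|_{L^1}+\|\nabla^2 h\|_{L^2_{-3/2}}\bigr)^{6/7},
\]
the value $99/4$ being forced by the identity $(2/7)(99/4)+(5/7)(-3/2)=6$ (for the second inequality only $L^1_{37/8}$ is actually required, but is dominated by $L^1_{99/4}$). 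I then split the parenthesis via $(a+b)^p\le 2^{p-1}(a^p+b^p)$ and apply Young's inequality with conjugate exponents $(7/5,7/2)$ on the $\|\nabla h\|_{L^2_6}^2$ bound and $(7/3,7/4)$ on the $\|h\|_{L^2_2}^2$ bound, with Young parameter $\varepsilon\sim \eta^3 A^{-2}$ (respectively $\varepsilon\sim \eta A^{-2}$) chosen so that the $\|\nabla^2 h\|_{L^2_{-3/2}}^2$ pieces are swallowed by $-\tfrac{C(K)}{2}\|\nabla^2 h\|_{L^2_{-3/2}}^2$. The dominant residual coefficient on $\|h\|_{L^1_{99/4}}^2$ then comes from the $\|\nabla h\|_{L^2_6}^2$ line and equals
\[
C(K)\,\eta^{-3}A^2\,\varepsilon^{-5/2}\;=\;C^*(K)\,\eta^{-21/2}A^{7}\;\le\;C^*(K)\,\eta^{-13}\exp\!\bigl(7\eta^{-10\tau/(\tau-31)}\bigr),
\]
the last step using $\eta^{-21/2}\le\eta^{-13}$ for $\eta\in(0,1)$. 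The contribution of (d) is immediate from $\|h\|_{L^1}\le\|h\|_{L^1_{99/4}}$, and is absorbed into the same coefficient.

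The only real obstacle is the careful bookkeeping: verifying that the two interpolation exponents $5/7$ and $3/7$, the four Young exponents, the calibration of $A$, and the weight-balance identities mesh to produce precisely the announced powers $\eta^{-13}$ and $\exp(7\eta^{-10\tau/(\tau-31)})$. Beyond this delicate but elementary accounting, each individual step is a direct consequence of Propositions \ref{EstI0}--\ref{Esum} and of standard weighted Gagliardo--Nirenberg theory.
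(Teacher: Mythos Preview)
Your proof is correct and follows essentially the same route as the paper: combine Proposition~\ref{Esum} with Corollary~\ref{coercivity3}, interpolate $\|\nabla h\|_{L^2_6}$ and $\|h\|_{L^2_2}$ against $\|h\|_{L^1_{99/4}}$ and $\|\nabla^2 h\|_{L^2_{-3/2}}$ via Proposition~\ref{interpH1a}, absorb the resulting second-derivative pieces by Young's inequality, and then fix $A=\exp(\eta^{-10\tau/(\tau-31)})$. The only cosmetic differences are that the paper uses the single $H^1_6$ interpolation (with $\theta=5/7$) for both weighted norms and calibrates the Young parameter so that the absorbed pieces are of size $\eta/2$ rather than a $K$-dependent constant, which yields $\eta^{-13}A^7$ directly instead of your sharper $\eta^{-21/2}A^7$ that you then relax.
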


\begin{proof}
Using  Proposition \ref{Esum} and Corollary  \ref{coercivity3}, we see that
 \begin{equation}\label{nineg}
  I_1 + I_2 + I_3 + I_4 \le    - \,C(K) \,\|\na^2 h\|_{L^2_{-\frac32}}^2  - C(K) \, \|h\|_{L^1_{15/4}}^{-\f45} \, \|\na h\|_{L^2}^{\f{14}5} + C^*(K) \,\|h\|_{L^1}^2
\end{equation}
$$ + \, \eta\, \|\nabla^2 h\|_{L^2_{-\frac32}}^2 
+ (C^*\,\eta + C^*(K)\,\eta^{-1}(\log A)^{-\f{\tau-31}{5\tau}})\, \big(D(f)+1\big)\|\nabla h\|_{L^2}^{\f{14}5}$$
$$+\, C^*\, \eta^{-1}A^2\|h\|_{L^2_{2}}^2 +\, C^*(K)\, ( \eta^{-1} + \eta^{-3}\, A^2)\,  \|\nabla h\|_{L^2_6}^2. $$
Using Proposition \ref{interpH1a} for $m=6$, we see that
$$ \|\nabla h\|_{L^2_6} +  \| h\|_{L^2_6} \le C^*\, \|h\|_{L^1_{99/4}}^{2/7} \,  \|h\|_{L^1}^{5/7} +  C^*\, \|h\|_{L^1_{99/4}}^{2/7} \, \|\na^2 h\|_{L^2_{-\frac32}}^{5/7} , $$
so that thanks to Young's inequality, for any $\zeta>0$,
 $$ \|\nabla h\|_{L^2_6}^2 +  \| h\|_{L^2_2}^{2} \le C^*\, (1 + \zeta^{-7/2})\, \|h\|_{L^1_{99/4}}^{2} +  C^*\, {\zeta}^{7/5} \, \|\na^2 h\|_{L^2_{-\frac32}}^{2} . $$
 Taking $\zeta := C^* \,\eta^{10/7}\, A^{-10/7}$, we see that
\begin{equation}\label{ninegter}
 C^* \, \eta^{-1}\,A^2\,\| h\|_{L^2_2}^{2} \le C^*\, \eta^{-1}\, A^2 \,(1+ \eta^{-5}\,A^5) )\, \|h\|_{L^1_{99/4}}^{2} + \frac{\eta}2\, \|\na^2 h\|_{L^2_{-\frac32}}^2,
\end{equation}
while taking $\zeta := C^*(K)\, \eta^{20/7}\, A^{-10/7}$ (and observing that $\eta^{-1} \le \eta^{-3}\,A^2$), we see that
\begin{equation}\label{ninegbis}
 C^*(K) \,(\eta^{-1} + \eta^{-3}\,A^2)\, \|\nabla h\|_{L^2_6}^2  \le C^*(K)\, \eta^{-3}\, A^2 \,(1+ \eta^{-10}\,A^5) \, \|h\|_{L^1_{99/4}}^{2} + \frac{\eta}2\, \|\na^2 h\|_{L^2_{-\frac32}}^2.
\end{equation}
Using this bound in estimate  (\ref{nineg}), we see that
\begin{equation}\label{sum1}
  I_1 + I_2 + I_3 + I_4 \le   (2\eta  - C(K)) \,\|\na^2 h\|_{L^2_{-\frac32}}^2   - C(K) \, \|h\|_{L^1_{15/4}}^{-\f45} \, \|\na h\|_{L^2}^{\f{14}5} 
\end{equation}
$$  +  \bigg[ C^*(K) + C^*\, \eta^{-1}\, A^2 \,(1+ \eta^{-5}\,A^5)  + C^*(K)\, \eta^{-3}\, A^2 \,(1+ \eta^{-10}\,A^5)  \bigg]\, \|h\|_{L^1_{99/4}}^{2} $$
$$ + (C^*\,\eta + C^*(K)\,\eta^{-1}(\log A)^{-\f{\tau-31}{5\tau}})\, \big(D(f)+1\big)\|\nabla h\|_{L^2}^{\f{14}5}, $$
so that when $\eta < C(K)/4$, 
\begin{equation}\label{sum2}
I_1 + I_2 + I_3 + I_4 \le  -\f{C(K)}2 \,\|\na^2 h\|_{L^2_{-\frac32}}^2  - C(K) \, \|h\|_{L^1_{15/4}}^{-\f45} \, \|\na h\|_{L^2}^{\f{14}5}    + C^*(K)\,  \eta^{-13}\,A^7 \, \|h\|_{L^1_{99/4}}^{2}
\end{equation}
$$ + (C^*\,\eta + C^*(K)\,\eta^{-1}(\log A)^{-\f{\tau-31}{5\tau}})\, \big(D(f)+1\big)\|\nabla h\|_{L^2}^{\f{14}5} . $$
 We now select $A>e$ such that $(\log A)^{-\f{\tau-31}{10\tau}} = \eta$ , and get (changing the names of the constants) estimate \eqref{qqq}.
\end{proof}

\subsection{Application of the estimates to the solutions of Landau equation} \label{sub24}

\begin{lem}\label{ml} Let $\ell > 19/2$, $f_0\in L^1_\ell\cap L\log L$ be a nonnegative function such that $\int_{\R^3} f_0(v)\, dv~=1$,  $\int_{\R^3} f_0(v)\, |v|^2\, dv =3$. We consider 
 $f := f(t,v) $ a  weak (well constructed) nonnegative solution to the Landau equation with Coulomb potential  \eqref{landau} -- (\ref{13d}),
and $h=f-\mu$. 
\par 
Then for all $\theta \in [0, \ell]$, $q < q_{\ell, \theta}$ with
\begin{equation}\label{qltheta}
q_{\ell, \theta} :=  - \frac{2\,\ell^2 - 25\,\ell + 57}{18\,(l-2)}\, \left(1 - \frac{\theta}{\ell}\right)  + \frac{\theta}{\ell}, 
\end{equation}
 there exists $C>0$ (depending on $\theta$, $\ell$ and $K$ such that $\|f_0\|_{L^1_l} + \|f_0\|_{L\,\log L} \le K$) 
such that 
\begin{equation}\label{eqaz} 
\forall t \ge 0, \qquad \qquad \|h(t,\cdot) \|_{L^1_{\theta}}  \le C\, (1+t)^q.
\end{equation}

More specifically,
if $\ell \ge 55$, then for some $r_1>7/4$, $r_2 >0$,
\begin{equation}\label{eqaz2} 
\forall \;t \ge 0,  \quad ||h(t,\cdot) ||_{L^1_{99/4}}  \le C\, (1+t)^{-r_1}, 
 \quad ||h(t,\cdot) ||_{L^1_{45}}  \le C\, (1+t)^{-r_2}.
\end{equation}
\end{lem}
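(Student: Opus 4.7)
The plan is to derive both bounds by interpolating, via Jensen's inequality, between the $L^1$ control coming from Csisz\'ar-Kullback-Pinsker and the weighted bound $\|f\|_{L^1_\ell} \le C(1+t)$ provided by Theorem~\ref{thm:decay}.

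First, since $f(t,\cdot)$ and $\mu$ have the same mass $1$ (thanks to the conservation laws and the normalisation~\eqref{f0}), the Csisz\'ar-Kullback-Pinsker inequality yields
\[
\|h(t)\|_{L^1}^2 \le 2\, H(t).
\]
Combined with the entropy decay in Theorem~\ref{thm:decay}, this gives, for every $\beta < B := \frac{2\ell^2-25\ell+57}{9(\ell-2)}$,
\[
\|h(t)\|_{L^1} \le C_\beta\,(1+t)^{-\beta/2}.
\]
For the large-velocity endpoint, the second estimate of Theorem~\ref{thm:decay} combined with the obvious bound $\|\mu\|_{L^1_\ell}<\infty$ gives
\[
\|h(t)\|_{L^1_\ell} \le C\,(1+t).
\]

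Second, I would apply Jensen's inequality to the probability measure $|h(t,v)|\,dv/\|h(t)\|_{L^1}$ with the concave function $x\mapsto x^{\theta/\ell}$ (valid for $\theta\in[0,\ell]$) to obtain the standard log-convex interpolation
\[
\|h(t)\|_{L^1_\theta} \le \|h(t)\|_{L^1}^{\,1-\theta/\ell}\,\|h(t)\|_{L^1_\ell}^{\,\theta/\ell}.
\]
Plugging in the two endpoint bounds yields
\[
\|h(t)\|_{L^1_\theta} \le C\,(1+t)^{-\tfrac{\beta}{2}(1-\theta/\ell)\,+\,\theta/\ell},
\]
and letting $\beta \uparrow B$ produces exponents arbitrarily close to the threshold $q_{\ell,\theta}=-\tfrac{B}{2}(1-\theta/\ell)+\theta/\ell$. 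This is precisely the content of \eqref{eqaz}.

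Finally, for the concrete sharpening \eqref{eqaz2} at $\ell=55$, it is enough to evaluate the formula: $B = 4732/477 \simeq 9.92$, so the resulting exponent at $\theta=99/4$ is $\simeq -2.28 < -7/4$, and at $\theta=45$ is $\simeq -0.08 < 0$. Choosing $\beta$ sufficiently close to $B$ gives respectively $r_1>7/4$ and $r_2>0$, as required. The only mildly delicate point is the applicability of Csisz\'ar-Kullback-Pinsker, which genuinely requires $\int f(t,\cdot)\,dv = \int \mu \,dv$: this is why the statement restricts to well-constructed weak (or $H$-) solutions, for which mass conservation is guaranteed. There is otherwise no real obstacle; the lemma is a routine consequence of Theorem~\ref{thm:decay} and log-convexity of weighted $L^1$ norms.
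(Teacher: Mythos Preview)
Your proposal is correct and follows essentially the same route as the paper: Csisz\'ar--Kullback--Pinsker combined with the entropy decay of Theorem~\ref{thm:decay} controls $\|h\|_{L^1}$, the moment propagation bound controls $\|h\|_{L^1_\ell}$, and log-convex interpolation of weighted $L^1$ norms yields the general exponent, with the special cases at $\ell=55$ checked numerically. Your explicit invocation of Jensen's inequality to justify the interpolation and your numerical verification of the two thresholds are slightly more detailed than the paper's version, but the argument is the same.
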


\begin{proof}
We first recall that thanks to Theorem \ref{thm:decay}, for $\beta < \frac{2 \ell^2 - 25\ell + 57}{9(\ell - 2)}$, the relative entropy decays according to the inequality
$$ \forall t >0, \qquad H(t) \le C_{\beta} \, (1+t)^{- \beta}, $$
where $C_{\beta}>0$ only depends on $\ell$ and $K$ such that $||f_0||_{L^1_{\ell}(\R^3)} +  \|f_0\|_{L\,\log L}  \le K$.\par 
Using Cziszar-Kullback-Pinsker inequality (cf.~\cite{Csiszar,CKP2}),
we see that 
$$ \forall t >0, \qquad \|h(t,\cdot)\|_{L^1}  \le C_{\beta} \, (1+t)^{- \beta/2}. $$
\par 
Then, (using Theorem \ref{thm:decay} again) for all $\ell >2$, there exists $C_{\ell} >0$ (which only
 depends on $\ell$ and $K$ such that $||f_0||_{L^1_{2}(\R^3)} +  \|f_0\|_{L\,\log L}  \le K$ ), 
such that 
$$ \forall t >0, \qquad \|h(t, \cdot)\|_{L^1_{\ell}}  \le C_{\ell} \, (1+t). $$
\par 
Finally, we interpolate between the two previous inequalities, for $\theta \in [0,\ell]$:
\ben\label{hdecay} \|h\|_{L^1_\theta}\le \|h\|_{L^1}^{1-\theta/\ell} \, \|h\|_{L^1_\ell}^{\theta/\ell}
\le C\, (1+t)^{q}, \een 
for $q < q_{\theta, \ell}$, and $C>0$ as described in the Lemma.
\medskip

The special case (when $\theta = 99/4$, or $\theta=45$) is directly obtained thanks to this estimate.
\end{proof} 

 We now write the $H^1$ estimate that will yield the differential inequality (\ref{MonoFom}).

\begin{prop}\label{esqml}  
Let $f_0\in L^1_{55}(\R^3)\cap L\log L(\R^3)$ be a nonnegative function such that $\int_{\R^3} f_0(v)\, dv =1$ and  $\int_{\R^3} f_0(v)\, |v|^2\, dv =3$. We consider 
 $f := f(t,v) $ a  nonnegative smooth and quickly decaying when $|v| \to \infty$ ($C^2_t(\mathcal{S})$) solution (on an interval of time $[0,T]$) to the Landau equation  \eqref{landau} -- \eqref{13d},
and $h=f-\mu$. 
\par Then for some $k_1>0$, $k_2 > 7/2$, $C_1,C_2,C_3>0$ (depending only on $K$ such that $\|f_0\|_{L^1_{55}} +  
 \|f_0\|_{L\,\log L} \le K$),
 the following differential inequality holds (on $[0,T]$) for all $\eta \in ]0,1[$ sufficiently small (depending on $K$):
\ben
\label{EE1} 
&&  \f{d}{dt}  \|\nabla h\|_{L^2}^2+ C_1\,(1+t)^{k_1} \, \|\na h\|_{L^2}^{\f{14}5}\notag\\&&
\le   \eta\, C_3\, D(f)\, \|\nabla h\|_{L^2}^{\f{14}5}+C_2\, \eta^{-13}\exp\{7\eta^{-\f{450}{14}}\} \,(1+t)^{-k_2} .
\een
\end{prop}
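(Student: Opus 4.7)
The plan is to combine the time-derivative decomposition $\tfrac12\tfrac{d}{dt}\|\nabla h\|_{L^2}^2 = I_1+I_2+I_3+I_4$ from subsection \ref{sub21} with the summary estimate already assembled in Proposition \ref{Esum2}, and then inject the large-time moment decay of Lemma \ref{ml} to convert the two $L^1$-moment factors appearing in that estimate into explicit powers of $(1+t)$. Because we restrict attention to classical solutions in $C^2_t(\mathcal{S})$, all the formal manipulations of subsections \ref{sub21}--\ref{sub23} are rigorously valid, and in particular the integrations by parts producing the various $I_j$ and $I_{j,k}$ need no further justification here.

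Concretely, I would choose $\tau=45$ in Proposition \ref{Esum2} (allowed since $f_0\in L^1_{55}$) to obtain, for every sufficiently small $\eta\in(0,1)$,
\begin{align*}
\frac{d}{dt}\|\nabla h\|_{L^2}^2
&\le -C(K)\,\|h\|_{L^1_{15/4}}^{-4/5}\|\nabla h\|_{L^2}^{14/5}
+ C^*(K)\,\eta^{-13}\exp(7\eta^{-450/14})\,\|h\|_{L^1_{99/4}}^2 \\
&\quad + C^*(K)\,\eta\,(1+D(f))\,\|\nabla h\|_{L^2}^{14/5},
\end{align*}
where the non-positive coercive term $-C(K)\|\nabla^2 h\|_{L^2_{-3/2}}^2$ has been discarded (its role was already absorbed into the $\|\nabla h\|_{L^2}^{14/5}$-coercivity produced by Corollary \ref{coercivity3}), and the exponent matches $7\eta^{-10\tau/(\tau-31)}$ at $\tau=45$, giving the precise factor appearing in \eqref{EE1}. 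Applying Lemma \ref{ml} with $\ell=55$ and $\theta=15/4$ gives $q_{55,15/4}<0$, hence $\|h(t)\|_{L^1_{15/4}}^{-4/5}\ge c(1+t)^{k_1}$ for some $k_1>0$ depending only on $K$; the second estimate in \eqref{eqaz2} gives $\|h(t)\|_{L^1_{99/4}}^2\le C(1+t)^{-k_2}$ with $k_2=2r_1>7/2$, which is precisely the margin needed later when integrating $(1+t)^{-k_2}$ against the monotone functional \eqref{monoquantity}.

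Plugging both bounds into the previous display yields
\begin{align*}
\frac{d}{dt}\|\nabla h\|_{L^2}^2 + c_1(1+t)^{k_1}\|\nabla h\|_{L^2}^{14/5}
\le C^*(K)\,\eta^{-13}e^{7\eta^{-450/14}}(1+t)^{-k_2} + C^*(K)\,\eta\,(1+D(f))\,\|\nabla h\|_{L^2}^{14/5}.
\end{align*}
The harmless piece $C^*(K)\eta\|\nabla h\|_{L^2}^{14/5}$ (without $D(f)$) is then absorbed into $c_1(1+t)^{k_1}\|\nabla h\|_{L^2}^{14/5}$ by taking $\eta$ small enough relative to $c_1$ and using $(1+t)^{k_1}\ge 1$, while the piece $C^*(K)\eta D(f)\|\nabla h\|_{L^2}^{14/5}$ is retained and yields the $\eta C_3 D(f)\|\nabla h\|_{L^2}^{14/5}$ term on the right of \eqref{EE1}. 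The only conceptual (rather than computational) subtlety is why $D(f)$ must be \emph{kept} on the right: at this stage no pointwise control on $D(f)$ is available, and it can only be tamed \emph{after} integration in time against $\int_0^\infty D(f)\,dt \le H(0)$, which is exactly the mechanism that will close the monotonicity formula \eqref{MonoFom} once \eqref{EE1} is divided by an appropriate regularised power of $\|\nabla h\|_{L^2}^2$ in the next step.
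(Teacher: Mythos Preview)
Your proof is correct and follows essentially the same route as the paper: apply Proposition~\ref{Esum2} with $\tau=45$ (the uniform-in-time $L^1_{45}$ bound on $f(t)$ being supplied by Lemma~\ref{ml}), insert the moment-decay estimates of Lemma~\ref{ml} to convert $\|h\|_{L^1_{15/4}}^{-4/5}$ and $\|h\|_{L^1_{99/4}}^2$ into $(1+t)^{k_1}$ and $(1+t)^{-k_2}$, and absorb the harmless piece $C^*(K)\eta\|\nabla h\|_{L^2}^{14/5}$ into the left side by taking $\eta$ small. One trivial slip: the $L^1_{99/4}$ bound is the \emph{first} inequality in~\eqref{eqaz2}, not the second.
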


\begin{proof} 
We consider a  smooth and quickly decaying when $|v| \to \infty$ solution $f:= f(t,v)\ge 0$ to \eqref{landau} -- (\ref{13d}) (on a given interval of time $[0,T]$). According to Lemma \ref{ml} (more precisely to
 the special case described in this Lemma),
 this solution  is bounded in
$L^1_{45}(\R^3)$ (with a bound controlled by $K$ such that $\|f_0\|_{L^1_{55}(\R^3)} +  ||f_0||_{L\,\log L}  \le K$). 
\par
Recalling the computation (\ref{i14}), which rigorously holds, we can use Proposition \ref{Esum2} with $\tau=45$ 
(for a smooth solution to eq. \eqref{landau} -- \eqref{13d}),  and we see that (for some $C, C_4, C_5>0$ 
depending only on $K$ such that $\|f_0\|_{L^1_{55}} + \|f_0\|_{L\,\log L} \le K$),
\begin{equation}\label{nineg2}
  \f{d}{dt}  \|\nabla h\|_{L^2}^2   + C \, \|h\|_{L^1_{15/4}}^{-\f45} \, \|\na h\|_{L^2}^{\f{14}5}  
\end{equation}
$$  \le C_4\,  \eta^{-13}\,\exp( 7 \,\eta^{- \frac{450}{14}})\, \|h\|_{L^1_{99/4}}^{2}  + C_5\,\eta\,(1 + D(f))\, \|\nabla h\|_{L^2}^{\f{14}5} . $$
Using again the special case described at the end of Lemma \ref{ml} (and observing that $ \|h\|_{L^1_{15/4}} \le  \|h\|_{L^1_{99/4}}$),
 we complete the proof of the differential inequality \eqref{EE1}.

\end{proof}


 
Thanks to Proposition \ref{esqml}, we now can reduce the main results in Theorem \ref{maintheorem1} to the analysis of some ordinary differential inequality.

\subsection{Analysis of a differential inequality} \label{sub25}

 We start with the following Lemma, which corresponds to the special case $\eta = C_3^{-1}$, $B^* := \mathcal{B}(\eta)$, $\mathcal{B}(x) := C_2\, x^{-13}\,\exp\{7\,x^{-\f{450}{14}}\}$ in Proposition~\ref{esqml}.

\begin{lem}\label{edo1} 
Let $X, H$ be  $C^1$ functions from $[0,T]$ to $\R_+$ (for $T \in ]0, +\infty]$), $C_1,B^*,k_1>0$, $k_2> 7/2$, and $D := - H'$ such that
  \begin{equation}\label{ODE}
\forall t \in [0,T],
\qquad 
\frac{d}{dt}X(t)^2+C_1(1+t)^{k_1}\, X(t)^{\frac{14}{5}}\leq   D(t)\, X(t)^{\frac{14}{5}} +B^*\, (1+t)^{-k_2}.
\end{equation}
\par 
Then for $k:= \min( \frac{2 k_2 - 7}5, k_1)$ and some constant $C_6>0$ depending only on $C_1, B^*, k_2$, the following differential inequality holds:
\begin{equation}\label{InqM1}
 \forall t \in [0,T], \qquad \frac{d}{dt}\bigg( H(t)-\f52\, [X(t)^2 + B^*\, (1+t)^{1-k_2}]^{-2/5} \bigg)+C_6\, (1+t)^{k} \le 0.  
\end{equation}
\end{lem}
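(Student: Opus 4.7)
The plan is to introduce the auxiliary quantity $Y(t) := X(t)^{2} + B^{*}(1+t)^{1-k_{2}}$, which is exactly the bracket appearing in the monotone functional, and to study directly
\[ G(t) := H(t) - \tfrac{5}{2}\,Y(t)^{-2/5}. \]
Differentiation gives $G'(t) = -D(t) + Y(t)^{-7/5}\,Y'(t)$, so the whole problem reduces to a pointwise upper bound on $Y'$ followed by a careful algebraic rearrangement.

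Next I would compute $Y'(t) = \frac{d}{dt}X(t)^{2} + B^{*}(1-k_{2})(1+t)^{-k_{2}}$ and feed in \eqref{ODE} to obtain
\[ Y'(t) \leq \bigl[D(t) - C_{1}(1+t)^{k_{1}}\bigr]\,X(t)^{14/5} - B^{*}(k_{2}-2)(1+t)^{-k_{2}}, \]
where the sign of the last term uses $k_{2}>2$ (combining $1$ from the ODE with $1-k_{2}$ from the time derivative of $Y$ yields $2-k_{2}<0$). Substituting into $G'$ and regrouping according to the coefficient $D$ gives
\[ G'(t) \leq -D(t)\bigl[1 - Y^{-7/5}X^{14/5}\bigr] - C_{1}(1+t)^{k_{1}}Y^{-7/5}X^{14/5} - B^{*}(k_{2}-2)(1+t)^{-k_{2}}Y^{-7/5}. \]
The decisive structural observation is that $Y \geq X^{2}$ forces $Y^{-7/5}X^{14/5}\leq 1$, so the dissipation-dependent term is nonpositive (using $D\geq 0$, the natural sign when $H$ plays the role of an entropy in the applications of interest), and may simply be discarded.

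It only remains to show that the two remaining strictly negative contributions dominate $-C_{6}(1+t)^{k}$, which I would obtain by splitting according to the dominant term in $Y$. In the regime $X(t)^{2}\geq B^{*}(1+t)^{1-k_{2}}$, one has $Y\leq 2X^{2}$, hence $Y^{-7/5}X^{14/5}\geq 2^{-7/5}$, and the $C_{1}$ contribution alone yields a lower bound of $2^{-7/5}C_{1}(1+t)^{k_{1}}$. In the opposite regime $X(t)^{2} < B^{*}(1+t)^{1-k_{2}}$, one has $Y\leq 2B^{*}(1+t)^{1-k_{2}}$, hence $Y^{-7/5}\geq (2B^{*})^{-7/5}(1+t)^{7(k_{2}-1)/5}$; multiplying by $B^{*}(k_{2}-2)(1+t)^{-k_{2}}$ produces precisely the exponent $\bigl(7(k_{2}-1)-5k_{2}\bigr)/5=(2k_{2}-7)/5$. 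Choosing $k:=\min\bigl(k_{1},(2k_{2}-7)/5\bigr)$, which is positive because $k_{2}>7/2$, and letting $C_{6}$ be the minimum of the two resulting absolute constants, closes the estimate.

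The main (and essentially only) subtle point is the cancellation of the $D$-contribution via the inequality $Y^{-7/5}X^{14/5}\leq 1$: the pair of exponents $7/5$ (on $Y^{-1}$) and $14/5$ (on $X$, coming from the nonlinearity in \eqref{ODE}) must balance exactly, and this rigidly forces the exponent $-2/5$ together with the prefactor $5/2$ in the ansatz $G=H-\tfrac{5}{2}Y^{-2/5}$. Once this correct normalization is identified, the remainder of the proof is routine algebra in the two cases above.
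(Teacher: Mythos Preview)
Your proof is correct and follows essentially the same route as the paper's. The paper phrases the key cancellation by observing that the auxiliary function $Y(t)=B^*(1+t)^{1-k_2}$ itself satisfies $Y'+c_1(1+t)^{(2k_2-7)/5}Y^{7/5}=-B^*(1+t)^{-k_2}$, then adds this to \eqref{ODE} and divides by $(X^2+Y)^{7/5}$; your direct computation of $G'$ followed by the case split on whether $X^2$ or $B^*(1+t)^{1-k_2}$ dominates is the same argument unwound, and both versions tacitly use $D\ge 0$ at the identical spot.
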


\begin{proof}

 We first observe that denoting $Y(t):=B^*\,(1+t)^{-k_{2}+1}$
and  $c_1 := (B^*)^{-\f25}\,(k_{2}-2)$, the following differential inequality holds:
\beno \forall t \in [0,T],
\qquad  \f{d}{dt} Y(t)+c_1\, (1+t)^{\frac{2 k_2 - 7}5}\, Y^{\f{7}5}(t)\le -B^*\,(1+t)^{-k_2}. \eeno
Therefore for some $C_6>0$ depending only on $C_1, B^*, k_2$, the following differential inequality also holds:
$$ \forall t \in [0,T], \qquad   \frac{d}{dt} \bigg[X(t)^2+B^*\,(1+t)^{1-k_2}\bigg]  + C_6\, (1+t)^{k}\, \bigg[X(t)^2+B^*\,(1+t)^{1-k_2}\bigg]^{7/5}  $$
$$ \leq  D(t) \bigg[X(t)^2+B^*\,(1+t)^{1-k_2}\bigg]^{7/5}. $$
The differential inequality stated in the Lemma is then obtained by dividing this differential inequality by $\bigg[X(t)^2+B^*\,(1+t)^{1-k_2}\bigg]^{7/5}$.
\end{proof}
 
Next we turn to the following consequence of Lemma \ref{edo1}:

\begin{lem}\label{LemODI1} 
Let $X, H$ be  $C^1$ functions from $[0,T]$ to $\R_+$ (for $T \in ]0, +\infty]$), $C_1,B^*,k_1>0$, $k_2> 7/2$, and $D := - H'$, such that the differential inequality
(\ref{ODE}) holds.
\begin{itemize}
\item If 
$ H(0)\, [X(0)^2 + B^*]^{2/5} \le \f52$, 
then for  some constant $C_6>0$ depending only on $C_1, B^*, k_2$,
\beno \forall t \in [0,T], \qquad  X(t)\le (\f{2}{5})^{-\f54}\bigg(H(t)+\f{C_6}{k+1}\, \bigg[(1+t)^{1+k}-1 \bigg]\bigg)^{-\f54}; \eeno

\item If $ H(0)\, [X(0)^2 + B^*]^{2/5} > \f52$,  then for
$$T^* := \bigg(\f{1+k}{C_6} \bigg[  H(0)-\f52\, [X(0)^2 + B^*]^{-2/5} \bigg] +1\bigg)^{\f1{k+1}}-1 , $$
 one has (for $T > T^*$) $H(T^*) \le \f52\, [X(T^*)^2 + B^*\, (1+T^*)^{1-k_2}]^{-2/5}$ and for $t\in [0,T - T^*]$,
 \[
X(T^* +t) \le  (\f{2}{5})^{-\f54}\bigg(H(T^*)+\f{C_6}{k+1}\, \bigg[(1+T^*+t)^{1+k}-(1+T^*)^{1+k} \bigg]\bigg)^{-\f54}.\] 
\end{itemize}

\end{lem}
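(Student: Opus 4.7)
The plan is to extract both bullets from the single monotonicity $\mathcal{M}'(t)\leq -C_6(1+t)^k$ already supplied by Lemma~\ref{edo1}, where
\[
\mathcal{M}(t) := H(t) - \tfrac{5}{2}\bigl[X(t)^2 + B^*(1+t)^{1-k_2}\bigr]^{-2/5}.
\]
Integrating over any subinterval $[s,t]\subset[0,T]$ gives
\[
\mathcal{M}(t) \;\leq\; \mathcal{M}(s) - \frac{C_6}{k+1}\bigl[(1+t)^{k+1} - (1+s)^{k+1}\bigr],
\]
and the two bullets are then purely algebraic consequences of this single scalar bound.

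For the first bullet, the hypothesis $H(0)[X(0)^2+B^*]^{2/5}\leq 5/2$ is exactly $\mathcal{M}(0)\leq 0$. Taking $s=0$, dropping the non-positive $\mathcal{M}(0)$, and rearranging yields
\[
\tfrac{5}{2}\bigl[X(t)^2 + B^*(1+t)^{1-k_2}\bigr]^{-2/5} \;\geq\; H(t) + \frac{C_6}{k+1}\bigl[(1+t)^{k+1}-1\bigr].
\]
The right-hand side is nonnegative (both summands are), so the $-2/5$ power may be safely inverted; combined with the trivial bound $X(t)\leq \bigl(X(t)^2 + B^*(1+t)^{1-k_2}\bigr)^{1/2}$, this delivers the claimed inequality, with constant $(2/5)^{-5/4}=(5/2)^{5/4}$.

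For the second bullet, $\mathcal{M}(0)>0$, and the formula for $T^*$ is the unique positive solution of
\[
\mathcal{M}(0) \;=\; \frac{C_6}{k+1}\bigl[(1+T^*)^{k+1}-1\bigr],
\]
existence and uniqueness following from the strict monotonicity of the right-hand side in $T^*$. Integrating $\mathcal{M}$ on $[0,T^*]$ then forces $\mathcal{M}(T^*)\leq 0$, which unpacks exactly as the asserted bound $H(T^*)\leq \tfrac{5}{2}[X(T^*)^2 + B^*(1+T^*)^{1-k_2}]^{-2/5}$. Restarting the integration at $s=T^*$, and using $\mathcal{M}(T^*)\leq 0$ exactly as $\mathcal{M}(0)\leq 0$ was used above, reduces the analysis to the first-bullet situation shifted in time, producing the stated upper bound on $X(T^*+t)$ after the same rearrangement-and-inversion step.

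There is no real obstacle here: the argument is a clean dichotomy driven entirely by Lemma~\ref{edo1}, and no further analytic machinery enters. The only care-points are the unique solvability of the equation defining $T^*$ (immediate by monotonicity), and the verification that the right-hand side of the rearranged inequality is strictly positive before inverting the $-2/5$ power (which is automatic once the integrated dissipation term $\frac{C_6}{k+1}[(1+t)^{k+1}-(1+s)^{k+1}]$ is activated).
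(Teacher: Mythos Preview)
Your proof is correct and follows essentially the same route as the paper: both integrate the differential inequality of Lemma~\ref{edo1} over $[t_1,t_2]$, then read off the first bullet by taking $t_1=0$ and using $\mathcal{M}(0)\le 0$, and the second bullet by first solving for $T^*$ so that the integrated dissipation exactly cancels $\mathcal{M}(0)$, then restarting the argument from $T^*$. The algebraic manipulations (inverting the $-2/5$ power, dropping the nonnegative $B^*(1+t)^{1-k_2}$ term) are identical in spirit.
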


\begin{proof} 
By integrating both sides of inequality \eqref{InqM1} on the interval $[t_{1},t_{2}],\; T > t_{2}> t_{1}\geq 0$, we see that
$$   H(t_2)-\f52\, [X(t_2)^2 + B^*\, (1+t_2)^{1-k_2}]^{-2/5} +\f{C_6}{1+k}\, \bigg[(1+t_2)^{1+k}-(1+t_1)^{1+k} \bigg] $$
\ben\label{Mt} 
\le  H(t_1)-\f52\, [X(t_1)^2 + B^*\, (1+t_1)^{1-k_2}]^{-2/5} . 
\een  
Taking $t_1=0,\;t_2=t$, and using the condition
 $\f52\, [X(0)^2 + B^*]^{-2/5} \ge H(0) $, we rewrite the above inequality as  
\beno \f52\, [X(t)^2 + B^*\, (1+t)^{1-k_2}]^{-2/5} \ge H(t) +\f{C_6}{1+k}\, \bigg[(1+t)^{1+k}-1 \bigg].\eeno
From this, we get
\beno 
X(t)\le \bigg[ (\f{2}{5})^{-\f52}\bigg(H(t)+\f{C_6}{k+1}\, \bigg[(1+t)^{1+k}-1 \bigg]\bigg)^{-\f52}-B^* \,(1+t)^{1-k_2}\bigg]^{\f12}, 
\eeno
which proves the first result.
\medskip

The second result follows from estimate \eqref{Mt} by taking $t_1=0$ and $t_{2}=T^*$, and solving 
\[
\f{C_6}{1+k}\, \bigg[(1+T^*)^{1+k}-1 \bigg]=H(0)-\f52\, [X(0)^2 + B^*]^{-2/5} .
\]
Now let $t_1=T^*$ and $t_{2}=T^*+t,\;t>0$, then $H(T^*) - \f52\, [X(T^*)^2 + B^*\, (1+T^*)^{1-k_2}]^{-2/5}\leq 0$
 implies that 
\[
X(T^*+t) \le  (\f{2}{5})^{-\f54}\bigg(H(T^*)+\f{C_6}{k+1}\, \bigg[(1+T^*+t)^{1+k}-(1+T^*)^{1+k} \bigg]\bigg)^{-\f54},
\]
which gives the estimate for $X$ after the time $T^*$ described in the Lemma.
\end{proof}

\subsection{End of the proof of 
 Theorem \ref{maintheorem1}} \label{sub26}
  We now are in a position to prove  
  Theorem \ref{maintheorem1}. 
We show  that  the {\it a priori} estimates obtained in Subsections 2.1 to 2.5 can be used to build a solution to eq. (\ref{landau})  -- (\ref{13d}), thanks to their application to the smooth solutions of an approximated equation. 
\medskip


We introduce therefore the unique solution $f^\epsilon : = f^\epsilon(t,v) \ge 0$ to the approximated equation
 \ben\label{approeq}
 \pa_t f^\epsilon=Q^\epsilon(f^\epsilon, f^
 \epsilon),
 \een 
where $Q^\epsilon$ is defined by 
\beno Q^\epsilon(g,h)=\nabla_v \cdot {\Big (}[ a^\epsilon*g ]\;\nabla_v h- [ a^\epsilon*\nabla g ] \; h{\Big )},
\eeno
with 
\ben\label{ae}
a^\epsilon(z)=(|z|^2+\epsilon^2)^{-\f12} \, \left(Id -\frac{z\otimes z}{|z|^2} \right).
\een 
 We are therefore still considering a Landau equation, but with a regularized cross section. We also introduce  smooth and quickly decaying (when $|v| \to \infty$)  initial data, converging when $\epsilon \to 0$ towards
the original initial data $f_0$. The problem (\ref{approeq}) -- (\ref{ae}) 
 satisfies the same conservation properties (propagation of nonnegativity, conservation of mass, momentum and kinetic energy, decay of the entropy) as the original equation
 (\ref{landau})  -- (\ref{13d}).
 \medskip


Next we briefly explain how to prove the

\begin{prop}\label{thmappro}  For $\epsilon>0$,  estimates (\ref{MonoFom}), (\ref{smoesti}) and (\ref{nnn}) -- (\ref{noblowupht})  
hold for the unique smooth ($C^2_t(\mathcal{S})$) solution of  equation \eqref{approeq} -- (\ref{ae})
(with smoothed initial data),  with  constants which do not depend on $\epsilon$.
\end{prop}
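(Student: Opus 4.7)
The strategy is to reproduce, at the level of the regularized equation \eqref{approeq}--\eqref{ae}, each step of Subsections~\ref{sub21}--\ref{sub25}, checking that every constant can be chosen independent of $\epsilon$, and then apply the ODE analysis of Lemmas~\ref{edo1} and~\ref{LemODI1} to the smooth solution $f^{\epsilon}$. First, fix a sequence $f_{0}^{\epsilon} \in \mathcal{S}(\R^{3})$ of nonnegative Schwartz approximations of $f_{0}$ converging to $f_{0}$ in $L^{1}_{55} \cap L\log L \cap \dot{H}^{1}$, preserving the normalization \eqref{f0}, and with $\|f_{0}^{\epsilon}\|_{L^{1}_{55}} + \|f_{0}^{\epsilon}\|_{L\log L} \le K$ uniformly. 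Since $a^{\epsilon}$ and $b^{\epsilon} := \nabla \cdot a^{\epsilon}$ are smooth, bounded, globally Lipschitz matrix/vector fields, the corresponding quasilinear parabolic problem is uniformly parabolic for each fixed $\epsilon$, and classical theory (Arsenev--Peskov type existence together with moment and Schwartz-decay estimates, bootstrapped to arbitrary regularity) produces a unique global $C^{2}_{t}(\mathcal{S})$ solution $f^{\epsilon} \ge 0$ preserving mass, momentum, energy and satisfying the $H$-theorem for the regularized dissipation $D^{\epsilon}(f^{\epsilon})$.

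Next, I would verify that the $\epsilon$-independent analogues of all the key functional estimates of Section~2 hold. The coercivity Proposition~\ref{coercivity} goes through because on $\{|v-v_{*}| \ge \epsilon\}$ one has $(|v-v_{*}|^{2}+\epsilon^{2})^{-1/2} \ge (2|v-v_{*}|^{2})^{-1/2}$, while the small region $\{|v-v_{*}| \le \epsilon\}$ contributes a vanishing correction that can be absorbed into lower-order terms; one obtains Corollary~\ref{coercivity3} with constants depending only on $K$. The entropy-dissipation bound \eqref{Dfdissipation}, in the form $D^{\epsilon}(f^{\epsilon}) + 1 \ge C_{D,2}\|f^{\epsilon}\|_{L^{3,1}_{-3}}$, and the long-time estimates of Theorem~\ref{thm:decay} (i.e.\ the decay of $H^{\epsilon}(t)$ and moment propagation) are also known to be $\epsilon$-stable, because the proofs in \cite{D}, \cite{CDH} rely only on the lower bound of $a^{\epsilon}$ away from the diagonal and on uniform upper bounds of the moments, both of which remain uniform in $\epsilon$. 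With these ingredients, Propositions~\ref{EstI0}, \ref{EstI1}, \ref{EstII}, \ref{Esum2} and finally \ref{esqml} transfer verbatim to $f^{\epsilon}$; the only subtlety is the identity $\sum_{i}\partial_{i}b_{i} = -8\pi \delta_{0}$ appearing in $I_{2,2}$ and $I_{3,2}$, which is replaced by a smooth approximation of $\delta_{0}$ with bounded mass, giving a uniform control of the corresponding term.

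Having thus established \eqref{EE1} for $X^{\epsilon}(t) := \|\nabla h^{\epsilon}(t)\|_{L^{2}}$ and $H^{\epsilon}(t) := H(f^{\epsilon}(t)\,|\,\mu)$ with constants $C_{1}, C_{2}, C_{3}, k_{1}, k_{2}$ independent of $\epsilon$, I choose $\eta := C_{3}^{-1}$ and set $B^{\ast} := \mathcal{B}(\eta)$, $k := \min\{k_{1}, \tfrac{2k_{2}-7}{5}\}$. Lemma~\ref{edo1} then yields the monotonicity inequality \eqref{MonoFom} for $f^{\epsilon}$ on $[0,T]$, and Lemma~\ref{LemODI1} produces both the stable-regime bound \eqref{smoesti} (when $\mathcal{M}^{\epsilon}(0)\le 0$) and the post-$T^{\ast}$ bounds \eqref{nnn}--\eqref{noblowupht} (when $\mathcal{M}^{\epsilon}(0)>0$), with $T^{\ast}$ as in \eqref{DEFT}. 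Since all the constants and the threshold are controlled purely by $K$, the bounds are uniform in $\epsilon$, as claimed.

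The principal obstacle is the coercivity/dissipation step: one must make sure that Proposition~\ref{coercivity} and the Lorentz-space bound \eqref{Dfdissipation} are $\epsilon$-robust, as both rely sensitively on the singularity of $|z|^{-1}$ and $|z|^{-3}$ at the origin. This is, however, a technical verification rather than a conceptual difficulty: the weighted projection structure of $a^{\epsilon}$ is preserved, and the argument of Proposition~\ref{coercivity} only uses that $|v-v_{*}|^{-3}$ can be replaced by $(|v-v_{*}|^{2}+\epsilon^{2})^{-3/2}$ in the integrand, which still dominates by the same constant on the relevant region after one localizes away from the diagonal. Once this is in hand, everything else is the literal repetition of the computations of Subsections~\ref{sub21}--\ref{sub25}.
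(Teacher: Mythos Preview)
Your approach is the same as the paper's: establish smooth global solutions of the regularized equation, verify that all the building blocks of Section~2 (coercivity, the entropy-dissipation lower bound \eqref{Dfdissipation}, the long-time decay from Theorem~\ref{thm:decay}, and the upper bounds on the remainder terms) hold with $\epsilon$-independent constants, deduce the differential inequality \eqref{EE1} for $f^{\epsilon}$, and then apply Lemmas~\ref{edo1}--\ref{LemODI1}.

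The one place where your sketch does not close as written is the coercivity step. Splitting into $\{|v-v_*| \gtrless \epsilon\}$ and saying the inner region ``contributes a vanishing correction that can be absorbed into lower-order terms'' is the wrong direction: you are proving a \emph{lower} bound on $I_{1,1}^{\epsilon}$, so discarding the small ball only weakens the estimate, and the piece of the \emph{unregularized} integral you would need to subtract is singular, not small, so there is nothing to absorb. The paper instead reruns the proof of Proposition~\ref{coercivity} directly with $a$ replaced by $a^{\epsilon}$: the projection structure $\mathrm{Id}-z\otimes z/|z|^{2}$ is untouched, and in the Cauchy--Schwarz step the complementary factor $|v-v_*|^{3}$ becomes $(|v-v_*|^{2}+\epsilon^{2})^{1/2}|v-v_*|^{2}$, which for $\epsilon\le 1$ is still bounded by $C(1+|v-v_*|^{3})$, so the constant in \eqref{rr} remains controlled by $\|f\|_{L^1_5}$ uniformly in $\epsilon$. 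For the upper bounds in Subsection~\ref{sub23} the paper simply uses the pointwise inequalities $|a^{\epsilon}|\le|a|$ and $|b_i^{\epsilon}|\le|b_i|$, together with the observation that $\sum_i\partial_i b_i^{\epsilon}$ is a nonpositive function of total mass $-8\pi$; this is enough to reproduce Propositions~\ref{EstI0}--\ref{Esum2} without change of constants.
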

\begin{proof} \noindent{\it Step 1:}  Since (for $\epsilon>0$), there is no singularity in $a^\epsilon$,  equation \eqref{approeq} -- (\ref{ae}) behaves (from the point of view of regularity) like the Landau equation with Maxwell molecules (that is, when $\gamma=0$ in \eqref{agamma}). Hence, smoothness and moments can be proved to be propagated globally for this equation. This is easily checked by following the strategy used in \cite{HE12,HE14}. Thus equation (\ref{approeq})  -- (\ref{ae}) admits a unique (global) smooth solution (the initial data being themselves smooth).

\smallskip

\noindent{\it Step 2:} Using Theorem 3 in \cite{D}, we see that estimates \eqref{Dfdissipation11}  and \eqref{Dfdissipation}  hold  when $a$ is replaced by $a^\epsilon$, with a constant that does not depend on $\epsilon$. 
 It is then  possible to show, using the same method as in \cite{CDH},  that the long-time behavior estimates
 are  the same for the solution to equation \eqref{approeq}  -- (\ref{ae})  as those for the solution to
 Landau equation with Coulomb potential (\ref{landau})  -- (\ref{13d}). In other words,  Theorem \ref{thm:decay} holds for 
the unique smooth solutions to
 equation \eqref{approeq}  -- (\ref{ae}), with constants which do not depend on $\epsilon$. 


\smallskip

\noindent{\it Step 3:} We show that Proposition \ref{esqml} holds for the unique smooth solutions of  equation \eqref{approeq}  -- (\ref{ae}), with constants in the estimate which do not depend on $\epsilon$.

This amounts  to showing that the estimates in the proof still hold when $a$ is replaced by $a^\epsilon$. Noticing that 
$ b^\epsilon_i(z): = \sum_{j=1}^3 \partial_j a^\epsilon_{ij}(z) = -2 \,z_i\, |z|^{-2}(|z|^2+\epsilon^2)^{-\f12}$, 
 $ \sum_{i=1}^3 \pa_ib^\epsilon_i(z) =-2 \epsilon^{-3} |\epsilon^{-1} z|^{-2}\\\times(|\epsilon^{-1} z|^2+1)^{-\f32}$,
 we see that
$|a^\epsilon|\le |a|$, $|b_i^\epsilon|\le |b_i|$,
and those inequalities can be used to show that the estimates from above in Subsections \ref{sub23}, \ref{sub24} can be reproduced with the same constants for the approximated problem as for the original problem. 
 
\par
We then can directly check by inspecting the proofs that the coercivity estimate appearing in 
Proposition \ref{coercivity} and  Corollary \ref{coercivity2} can be reproduced when $a$ is replaced by $a^\epsilon$, with constants
that do not depend on $\epsilon$.


 \par
Since for $\epsilon>0$, the solution $f^\epsilon$ is smooth and quickly decaying when $|v| \to \infty$, the assumptions of Proposition \ref{esqml}  are fulfilled, so that 
estimate (\ref{EE1}) holds (for this solution),
 with constants in the estimate which do not depend on $\epsilon$. 
 \smallskip

\noindent{\it Step 4:} We now can apply Lemmas \ref{edo1} and \ref{LemODI1} to $X = \|\nabla h^\epsilon\|_{L^2}$, and obtain the estimates of Theorem \ref{maintheorem1}, 
for the unique smooth  solution of  equation \eqref{approeq}  -- (\ref{ae}), with constants in the estimate which do not depend on $\epsilon$.


%
\end{proof}

Finally we give the end of the proof of Theorem \ref{maintheorem1}.

\begin{proof}[End of the proof of Theorem \ref{maintheorem1}:] 
Note first that part (i) of Theorem \ref{maintheorem1} is immediately obtained (without using the approximation problem) by the use of 
Proposition \ref{esqml} and Lemma \ref{edo1}. 
\medskip

We now turn to parts (ii) and (iii).
As in \cite{D}, we consider $f^\epsilon$ the unique smooth solution of eq. (\ref{approeq}) -- (\ref{ae}) with initial data strongly converging to $f^0$.
  It is then possible to pass to the limit (in a weighted weak $L^1$ space, and up
to extracting a subsequence) when $\var \to 0$ in $f^\epsilon$, and get in this way a (well constructed) weak solution $f$
to the original 
equation  eq. (\ref{landau})  -- (\ref{13d}) with initial data $f^0$. 
\smallskip

Due to the convexity of $x \mapsto x\log x$ and the lower semi-continuity of the weak convergence in $\dot{H}^1$, we obtain that 
\beno 
H(t)\le  \liminf\limits_{\epsilon\rightarrow0} H^\epsilon(t),\quad \|\na h\|_{L^2}\le\liminf\limits_{\epsilon\rightarrow0}\|\na h^\epsilon\|_{L^2},
\eeno
where $H^\epsilon(t)$ is the relative entropy of $f^\epsilon$ at time $t$. 
\medskip

Thanks to these properties, we can pass to the limit in the following estimates:
\begin{itemize}
\item
For the initial data under the threshold,
 \[ \forall t \ge 0,  \qquad 
\|h^\epsilon(t)\|_{\dot{H}^1} \le  (\f{2}{5})^{-\f54}\bigg( H^\epsilon(t) + \f{C_6}{k+1}\, \bigg[(1+t)^{1+k}- 1 \bigg]\bigg)^{-\f54};
\]
\item 
For general suitable initial data and $t>T^*$,
\beno 
 H^\epsilon(t) \le \f52\, \bigg[\|h^\epsilon(t)\|_{\dot{H}^1}^2 + B^*\, (1+t)^{1-k_2} \bigg]^{-2/5}.
 \eeno
  \[
\|h^\epsilon(t)\|_{\dot{H}^1} \le  (\f{2}{5})^{-\f54}\bigg( \f{C_6}{k+1}\, \bigg[(1+t)^{1+k}-(1+T^*)^{1+k} \bigg]\bigg)^{-\f54}.
\]
\end{itemize}
We conclude thus the proof 
of
 statements $(ii)$ and $(iii)$ of 
Theorem \ref{maintheorem1}.


\end{proof}

\section{Local solutions: proof of Proposition \ref{localwell}}

We present in this section the 
Proof of Proposition \ref{localwell}. We start with the following Proposition, which is a variant of Proposition \ref{Esum2}:

 \begin{prop} \label{Esum22} 
Let $f \ge 0$  (and $h= f - \mu$) such that  $ \|f\|_{L^1_{2}(\R^3)} = 4$ and  $ \|f\|_{L^1_{\tau}(\R^3)} +   \|f\|_{L\log L} \le K$ with $K>0$ and $\tau >31$.
Then
 the following estimate holds: 
\begin{equation}\label{sumlast}  I_1 + I_2 + I_3 + I_4 \le  -\f{C(K)}4 \,\|\na^2 h\|_{L^2_{-\frac32}}^2  - \f{C(K)}2 \, \|h\|_{L^1_{15/4}}^{-\f45} \, \|\na h\|_{L^2}^{\f{14}5} 
- \frac12\, I_{1,1}  
\end{equation}
$$ +\, C^*(K) \, \|h\|_{L^1_{99/4}}^{2}  +   \|\nabla h\|_{L^2}^{\f{18}5} , $$
where $C^*(K), C(K)>0$ only depend on $K$ and $\tau$.
\end{prop}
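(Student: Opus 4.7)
The proof will follow the structure of Proposition \ref{Esum2}, but with two tactical modifications designed to sidestep the small-$\eta$/large-$A$ trade-off present there. First, I would split $-I_{1,1} = -\frac{1}{2}I_{1,1} - \frac{1}{2}I_{1,1}$, keeping the first half untouched on the right-hand side (this is what produces the $-\frac{1}{2}I_{1,1}$ term in the statement), and applying Corollary \ref{coercivity3} only to the second half. This yields a coercive contribution $-\frac{C(K)}{2}\|\nabla^2 h\|^2_{L^2_{-3/2}} - \frac{C(K)}{2}\|h\|^{-4/5}_{L^1_{15/4}}\|\nabla h\|^{14/5}_{L^2}$ together with a harmless $\|h\|^2_{L^1}$ term to be absorbed.

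Second, for the remainder terms $I_{1,2}, I_2, I_3, I_4$, I would redo the bounds from the proofs of Propositions \ref{EstI1} and \ref{EstII} but replace the entropy-dissipation bound \eqref{Dfdissipation} by the weaker inequality \eqref{f31e}: $\|f\|_{L^{3,1}_{-3}} \le C(\|\nabla h\|^{4/5}_{L^2}+1)$, and drop the $A$-splitting altogether (morally, $A=e$ fixed). The bound on $\|f\|_{L^{3,1}_6}$ no longer comes via smallness of $\|f^A\|_{L^1_{31}}$ but simply from Proposition \ref{interpL31}: $\|f\|_{L^{3,1}_6} \le C\|f\|^{1/5}_{L^1_{31}}\|f\|^{4/5}_{H^1}\le C(K)(\|\nabla h\|^{4/5}_{L^2}+1)$, using $\tau > 31$ so that $\|f\|_{L^1_{31}}$ is controlled by $K$. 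Plugging these into the Cauchy--Schwarz/O'Neil treatment of $\mathcal{I}_1$ and into $\mathcal{II}$ (with a fixed, no-longer-small Young parameter) and combining with the easy bound on $\mathcal{I}_2$ yields
\[
\mathcal{I} + \mathcal{II} \le \frac{C(K)}{8}\|\nabla^2 h\|^2_{L^2_{-3/2}} + C(K)\bigl(\|\nabla h\|^{18/5}_{L^2}+\|\nabla h\|^2_{L^2}+1\bigr),
\]
where the exponent $18/5 = 14/5 + 4/5$ is precisely the price of giving up $D(f)$.

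For $I_3+I_4$, I would apply Proposition \ref{EstI0} with $\eta$ chosen as a fixed small constant (say $\eta = C(K)/32$). The resulting weighted lower-order terms $\|h\|^2_{L^2_2}$ and the $\|\nabla h\|^2_{L^2_6}$ that may appear, I handle via Proposition \ref{interpH1a} with $m=6$ followed by Young's inequality, absorbing the $\|\nabla^2 h\|^{10/7}_{L^2_{-3/2}}$ factor into the remaining coercive share and producing a residual $C^*(K)\|h\|^2_{L^1_{99/4}}$. The stray $\|\nabla h\|^2_{L^2}$ and constant contributions are absorbed using the trivial bound $x^2 \le 1+x^{18/5}$. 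Assembling these estimates together with the coercivity extracted in the first step produces \eqref{sumlast}.

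The main obstacle is essentially bookkeeping: verifying that every error term produced by dropping $D(f)$ and the $A$-splitting lands in one of the three compartments allowed on the right-hand side, namely the negative $\|\nabla^2 h\|^2_{L^2_{-3/2}}$, the weighted $\|h\|^2_{L^1_{99/4}}$, or the single super-critical power $\|\nabla h\|^{18/5}_{L^2}$. The conceptual cost of these simplifications is that the right-hand side is no longer $o(1)$ as $t\to\infty$; the conceptual payoff is that Proposition \ref{Esum22} is a \emph{pointwise-in-time} closed inequality, which is exactly what the local well-posedness analysis of Proposition \ref{localwell} requires (and the kept $-\frac{1}{2}I_{1,1}$ term provides the residual dissipation needed to control higher-order quantities there).
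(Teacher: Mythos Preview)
Your overall strategy is sound and close to the paper's, but there is one substantive discrepancy: by dropping the $A$--splitting you cannot recover the coefficient $1$ in front of $\|\nabla h\|_{L^2}^{18/5}$ that the statement demands. In your scheme the dominant error term comes from
\[
C\eta^{-1}\,\|f\|_{L^{3,1}_{-3}}\,\|f\|_{L^{3,1}_6}\,\|\nabla h\|_{L^2}^{2}
\;\le\;
C(K)\,\eta^{-1}\bigl(\|\nabla h\|_{L^2}^{4/5}+1\bigr)^{2}\|\nabla h\|_{L^2}^{2},
\]
and since $\eta$ must already be taken of size $C(K)$ to absorb the $\eta\|\nabla^2 h\|_{L^2_{-3/2}}^2$ piece into the coercive term, the resulting prefactor on $\|\nabla h\|_{L^2}^{18/5}$ is a fixed $C^*(K)$, not $1$. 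The paper, by contrast, \emph{retains} the decomposition $f=f_A+f^A$: it still replaces $D(f)$ by \eqref{f31e}, but uses the smallness $\|f^A\|_{L^1_{31}}\le C(K)(\log A)^{-(\tau-31)/\tau}$ inside \eqref{aut2} to get a coefficient $C^*\eta+C^*(K)\eta^{-1}(\log A)^{-(\tau-31)/(5\tau)}$, which becomes $O(\eta)$ after the choice $(\log A)^{-(\tau-31)/(10\tau)}=\eta$. The price of the large $A$ is paid entirely by the $\eta^{-13}A^7$ constant in front of $\|h\|_{L^1_{99/4}}^2$, exactly where the statement allows an arbitrary $C^*(K)$. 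So the $\eta/A$ machinery is not discarded here; it is reused to shift weight from $\|\nabla h\|_{L^2}^{18/5}$ to $\|h\|_{L^1_{99/4}}^2$.

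Two minor points. First, your displayed bound contains a ``$+1$'' that should not be there: every term produced is a genuine power of $\|\nabla h\|_{L^2}$ (the cross term $\|\nabla h\|_{L^2}^{14/5}$ is handled by $x^{14/5}\le\tfrac12 x^{18/5}+\tfrac12 x^{2}$), and a free additive constant could not be absorbed into the right-hand side of \eqref{sumlast}. Second, once the $A$--splitting is dropped there is no $\|\nabla h\|_{L^2_6}^2$ term at all; only $\|\nabla h\|_{L^2}^2$ and $\|h\|_{L^2_2}^2$ survive, and those are indeed absorbed into $\|h\|_{L^1_{99/4}}^2$ via Proposition~\ref{interpH1a} plus Young. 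Your simpler route therefore proves the inequality with $\|\nabla h\|_{L^2}^{18/5}$ replaced by $C^*(K)\|\nabla h\|_{L^2}^{18/5}$, which is adequate for Proposition~\ref{localwell} (only the constants in $\mathcal{T}$ change), but not the literal statement of Proposition~\ref{Esum22}.
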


\begin{proof}
Using estimates (\ref{f31e}) and (\ref{aut}), we see that
$$  \|f\|_{L^{3,1}_{-3}}^2\|\na h\|_{L^2}^2 \le C\,\bigg( \|\na h\|_{L^2}^2 + \|\na h\|_{L^2}^{18/5}  \bigg). $$
 Then, recalling estimates (\ref{f31e}) and (\ref{aut2}),  we also see that (for $A>e$)
 $$ \|f\|_{L^{3,1}_{-3}}\|f^A\|_{L^{3,1}_6}\|\na h\|_{L^2}^2\le   C(K)\,(\log A)^{-\f{\tau-31}{5\tau}} \,\bigg( \|\na h\|_{L^2}^2 +  \|\na h\|_{L^2}^{\f{18}5} \bigg). $$
Using the notation (\ref{DefI}) and bounds (\ref{bi2}) and (\ref{bi1}), this leads to the bound (for all $\eta \in ]0,1[$ and  $A>e$)
 \beno \label{newI}
\mathcal{I}&\le& \frac{\eta}2\, \|\nabla^2 h\|_{L^2_{-\f32}}^2+ \bigg( C\,\eta\,
+C(K)\eta^{-1}(\log A)^{-\f{\tau-31}{5\tau}}\bigg)\,
\|\nabla h\|_{L^2}^{\f{18}5}\\
&& + \, C\, (\eta + \eta^{-1} + \eta^{-3}A^2)\, \|\nabla h\|_{L^2_6}^2 +C(K)\, \eta^{-1}(\log A)^{-\f{\tau-31}{5\tau}}\|\nabla h\|_{L^2}^2 .
 \eeno
\medskip

Using the notation (\ref{DefII})  and estimate (\ref{aut3}), we also get the estimate (for all $\eta \in ]0,1[$ and  $A>e$)
 \beno \label{newII}
\mathcal{II}&\le& \frac{\eta}4\,\|\nabla^2 h\|_{L^2_{-\f32}}^2+ C(K)\,\eta^{-1}(\log A)^{-\f{\tau-31}{5\tau}} \, \|\nabla h\|_{L^2}^{\f{18}5}
+ C\, \eta^{-1}A^2\|h\|_{L^2_{\f32}}^2\\
&&+\, C(K)\, \eta^{-1}(\log A)^{-\f{\tau-31}{5\tau}}\|\nabla h\|_{L^2}^2,
 \eeno
where $C(K)>0$ only depend on $K$ and $\tau$, and $C>0$ only depends on $\tau$.
\medskip

Recalling now estimates (\ref{i3i4}) and inequality (\ref{i12i2}) (together with notations (\ref{DefI}) and    (\ref{DefII})), and remembering that $\eta <1$, $\log A >1$ and $-\f{\tau-31}{5\tau} <0$, we end up with the estimate  
$$ I_{1,2} + I_2 + I_3 + I_4 \le  \eta\, \|\nabla^2 h\|_{L^2_{-\frac32}}^2 
+ (C\,\eta + C(K)\,\eta^{-1}(\log A)^{-\f{\tau-31}{5\tau}})\,  \|\nabla h\|_{L^2}^{\f{18}5} $$
\begin{equation}\label{n12}
+\, C\, \eta^{-1}A^2\|h\|_{L^2_{2}}^2 +\, C(K)\, ( \eta^{-1} + \eta^{-3}\, A^2)\,  \|\nabla h\|_{L^2_6}^2, 
\end{equation}
where $C(K)>0$ only depend on $K$ (and $\tau$) and $C>0$ only depends on $\tau$.

Using estimates (\ref{rrter}),  (\ref{nineg}) and (\ref{nineg2}), we see that (using $C^*$ for constants which can be replaced by larger constants, and $C$ for constants which can be replaced by smaller constants)
\begin{equation}\label{sum1b}
  I_1 + I_2 + I_3 + I_4 \le   (2\eta  - \frac12\,C(K)) \,\|\na^2 h\|_{L^2_{-\frac32}}^2   - \frac12\,C(K) \, \|h\|_{L^1_{15/4}}^{-\f45} \, \|\na h\| _{L^2}^{\f{14}5} - \frac12\, I_{1,1}
\end{equation}
$$  +  \bigg[ \frac12\, C^*(K) + C^*\, \eta^{-1}\, A^2 \,(1+ \eta^{-5}\,A^5) ) + C^*(K)\, \eta^{-3}\, A^2 \,(1+ \eta^{-10}\,A^5)  \bigg]\, \|h\|_{L^1_{99/4}}^{2} $$
$$ + (C^*\,\eta + C^*(K)\,\eta^{-1}(\log A)^{-\f{\tau-31}{5\tau}})\,   \|\nabla h\|_{L^2}^{\f{18}5} , $$
so that when $\eta < C(K)/8$, 
\begin{equation}\label{sum2b}
I_1 + I_2 + I_3 + I_4 \le  -\f{C(K)}4 \,\|\na^2 h\|_{L^2_{-\frac32}}^2  - \f{C(K)}2 \, \|h\|_{L^1_{15/4}}^{-\f45} \, \|\na h\|_{L^2}^{\f{14}5} 
- \frac12\, I_{1,1}  
\end{equation}
$$  + \,C^*(K)\,  \eta^{-13}\,A^7 \, \|h\|_{L^1_{99/4}}^{2} + (C^*\,\eta + C^*(K)\,\eta^{-1}(\log A)^{-\f{\tau-31}{5\tau}})\,     \|\nabla h\|_{L^2}^{\f{18}5} . $$
 Selecting $\eta >0$ sufficiently small, and  $A>e$ such that $(\log A)^{-\f{\tau-31}{10\tau}} = \eta$, we see that estimate (\ref{sumlast}) holds.
\end{proof}

\begin{proof}[End of the proof of Proposition \ref{localwell}:]  We observe that inequality  (\ref{sumlast}) still holds
 when the kernel of the Landau equation is replaced by the kernel of the approximated 
equation (\ref{approeq}) -- (\ref{ae}), with all constants not depending on $\epsilon$. 
Then, when $f^\epsilon$ (and $h^\epsilon = f^\epsilon - \mu$) is the unique smooth solution of eq. (\ref{approeq}) -- (\ref{ae}) (with regularized initial data), and proceeding as in the proof
of Proposition \ref{esqml}, we get the estimate
\begin{equation}\label{avn}
  \f{d}{dt}  \|\nabla h^\epsilon\|_{L^2}^2+ C_{12} \,\|\na^2 h^\epsilon\|_{L^2_{-\frac32}}^2
+\f14I_{1,1}^\epsilon +\f14C_{10}\,(1+t)^{k_1} \, \|\na h^\epsilon\|_{L^2}^{\f{14}5}
\le    \|\nabla h^\epsilon \|_{L^2}^{\f{18}5}+C_{11}(1+t)^{-k_2}, 
\end{equation}
where $k_1>0$ and $k_2>7/2$ are defined as in Prop. \ref{esqml}, and $C_{10}$, $C_{11}$, $C_{12}>0$ only depend on $K$ such that
$\|f_0\|_{L^1_{55}} + \|f_0\|_{L\,\log L} \le K$.
\medskip

This differential inequality implies that 
$$  \f{d}{dt}  \|\nabla h^\epsilon\|_{L^2}^2 \le    \|\nabla h^\epsilon \|_{L^2}^{\f{18}5}+C_{11}, $$
so that 
$$  \f{d}{dt}  \bigg(  \|\nabla h^\epsilon\|_{L^2}^2 + C_{11}^{5/9} \bigg)  \le  \bigg(  \|\nabla h^\epsilon \|_{L^2}^2 + C_{11}^{5/9} \bigg)^{9/5} . $$
Therefore, for $t \le \mathcal{T} := \frac54\,  ( \|\nabla h^\epsilon(0) \|_{L^2}^2 + 2C_{11}^{5/9})^{-4/5}$,
\begin{equation}\label{nsd}
  \|\nabla h^\epsilon(t) \|_{L^2}^2 \le \bigg[ ( \|\nabla h^\epsilon(0) \|_{L^2}^2 + C_{11}^{5/9})^{-4/5}  - \frac45\,t \bigg]^{-5/4} -  C_{11}^{5/9} . 
\end{equation}

Passing to the limit when $\epsilon \to 0$ as in the end of the Proof of Theorem \ref{maintheorem1}, we get the existence of a weak solution of Landau equation (\ref{landau}) -- (\ref{13d}) on the interval 
$[0, \mathcal{T}]$ which is in fact strong in the sense that it lies in $L^{\infty}([0, \mathcal{T}]; H^1(\R^3))$. Note indeed that the first time of blowup (in $H^1$ norm) is {\it{strictly}} bigger than
$\mathcal{T}$ since part of the dissipative terms  were not used in the differental inequality in order to get the bound (\ref{nsd}).
%
\medskip 

We now focus on the regularity of the obtained solution, and the consequences concerning the issue of uniqueness.
 Using Theorem \ref{thm:decay}, we see that  on the time interval $[0,\mathcal{T}]$, one has $h \in L^\infty_t(L^1_{55})$. Then  the estimates \eqref{nsd} and \eqref{avn} imply that
 $\nabla h \in  L^\infty_t(L^2)$, and $\na^2h \in L^2_t(L^2_{-\f32})$. Thanks to a Sobolev embedding,  we see that $h \in L^\infty_t(L^6)$.
 Interpolating with the estimate stating that $ h \in L^\infty_t(L^1_{55})$, we see that  $h\in L^\infty_t(L^2_{22})$. Interpolating again this estimate with the statement 
 $\na^2h \in L^2_t(L^2_{-\f32})$, we see that $h \in L^{16/7}_t(H^{7/4}_{23/16})$. 
 Thanks to yet another Sobolev embedding,  we obtain that $h\in L^2_t(L^\infty)$, which is sufficient to apply the stability result in \cite{Fournier}, and get the uniqueness of the strong
 solution built above, on the concerned interval of time. 
\medskip

We finally prove that  $f\in C([0,\mathcal{T}];\dot{H}^1)$.  Using estimate (\ref{avn}), we see that 
 $\na^2h \in L^2([0,\mathcal{T}];L^2_{-\f32})$, and $I_{1,1}\in L^1([0,\mathcal{T}])$. Recalling identities (\ref{i14}), (\ref{i1}) and estimate (\ref{n12}), we observe
 that $ \f{d}{dt}  \|\nabla h\|_{L^2}^2\in L^1([0,\mathcal{T}])$, so that $t \mapsto \|\nabla h(t)\|_{L^2}^2$ is continuous on the interval $[0,\mathcal{T}]$.
\par 
 Remembering the weak formulation \eqref{Qweak1} and the fact $\nabla h \in  L^\infty_t(L^2)$,  it is not difficult to check that  $ t \mapsto \int_{\R^3} \pa_ih(t,v)\, \phi(v) \, dv$ is continuous on $[0,\mathcal{T}]$, for any smooth and compactly supported function $\phi$.  We can conclude that $h\in C([0,\mathcal{T}]; \dot{H}^1)$ by patching together the above facts.
 Indeed, thanks to the continuity of  $t \mapsto \|\nabla h(t)\|_{L^2}^2$, we know that
\beno
\lim_{s\rightarrow t}\|\na (h(t)-h(s))\|_{L^2}^2=2\|\na h(t)\|_{L^2}^2-2\lim_{s\rightarrow t}(\na h(s),\na h(t)).
 \eeno
We conclude by approximating $\na h(t)$ in $L^2$ by a sequence  $\phi_n\in C_c^\infty$.
Note finally that the formula appearing in the definition of $\mathcal{T}$ in Proposition \ref{localwell} is obtained by defining $C_7 := \f12C_{11}^{-5/9}$.
\end{proof} 


\section{Weighted $H^1$ estimates and  proof of Proposition \ref{mainresult2}}

The main goal of this section is to get  estimates for weighted $H^1$ norms of solutions to the Landau equation with Coulomb potential (\ref{landau}) -- (\ref{13d}),  and then to use them in order to prove 
Proposition~\ref{mainresult2}. 

\subsection{Weighted $\dot{H}^{1}$ estimate}

Multiplying the equation for the derivatives of the Landau equation with Coulomb potential (\ref{landau}) -- (\ref{13d}), that is
(remembering that $h= f - \mu$ and that $\mu$ is the normalized Maxwellian given by (\ref{Defmu})). 
\begin{equation}\label{Eqpah}
\partial_t (\partial_k h)=Q(f,\partial_k h)+Q(\partial_k f, h)+Q(\partial_k h, \mu)+Q(h,\partial_k \mu),
\end{equation}
by $\lr{v}^{m}\,\pa_{k}h$, integrating with respect to $v$ and 
summing for $k=1,2,3$, we obtain (at the formal level)
\begin{equation}\label{weighted-h11}
\frac{1}{2}\frac{d}{dt}\|\nabla h\|^{2}_{L^{2}_{m/2}}=W_{1}+W_{2}+W_{3}+W_{4},
\end{equation}  
where $W_{1}, W_{2}, W_{3}$ and $W_{4}$ correspond to the terms of the right-hand side of~\eqref{Eqpah}.
\medskip 

We start our study by estimating the most significant terms, that is  $W_1$ and $W_2$.

\subsubsection{Estimate for $W_1$ and $W_2$} 

 The  following proposition enables to treat a large part of the terms coming out of $W_{1}$ and $W_{2}$:
\begin{prop}\label{E22}
Let $f$ be a nonnegative function satisfying the normalization (\ref{f0}), and $h = f - \mu$.
\medskip

 Then, the following estimates hold (for all $m \ge 0$ and some (absolute) constant $C>0$):
\begin{equation}\label{weight1}
 \iint  \bigg[ |v-v_{*}|^{-1}+|v-v_{*}|^{-2} \bigg]\, f(v_{*})\; |\nabla h(v)|^{2} \, \lr{v}^{m}\;dv_{*}dv 
 \leq C \, (1+\|\nabla h\|_{L^{2}}) \, \|\nabla h\|^{2}_{L^{2}_{m/2}},
\end{equation}
and 
\begin{equation}\label{weight2}
 \begin{split}
 &\iint |v-v_{*}|^{-2} |\nabla f(v_{*})|\; |h(v)|\, |\nabla h(v)|\, \lr{v}^{m}  \;dv_{*}dv \\
 &\leq C \, (1+\|\nabla h\|^{2}_{L^2}) \|h\|_{H^{1}_{m/2}}\|\nabla h\|_{L^2_{m/2}}.
\end{split}
\end{equation}
\end{prop}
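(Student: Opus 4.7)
My plan is to split both integrals over $(v,v_*)$ into a near region $\{|v-v_*|\leq 1\}$ and a far region $\{|v-v_*|\geq 1\}$, exploiting the equivalence $\langle v\rangle\sim\langle v_*\rangle$ in the near region, as in the proof of Proposition \ref{EstI1}. The auxiliary fact used throughout is that under the normalization \eqref{f0}, one has $\|h\|_{L^2}\leq C(1+\|\nabla h\|_{L^2})$ (via \eqref{ies} and $\|h\|_{L^1}\leq 2$), so $\|f\|_{H^1}\leq C(1+\|\nabla h\|_{L^2})$.

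For \eqref{weight1}, in the far region both kernels are bounded by $2$, so Fubini and $\|f\|_{L^1}=1$ give a bound by $C\|\nabla h\|_{L^2_{m/2}}^2$. In the near region, one can move all the weight $\langle v\rangle^m$ onto the $\nabla h$ factors thanks to $\langle v\rangle\sim\langle v_*\rangle$, and the problem reduces to estimating $\sup_v\int_{|v-v_*|\leq 1}|v-v_*|^{-2}f(v_*)\,dv_*$. Since $1_{\{|\cdot|\leq 1\}}|\cdot|^{-2}\in L^{3/2,\infty}(\R^3)$, O'Neil's inequality (Proposition \ref{oneil}) bounds this supremum by $C\|f\|_{L^{3,1}}$; real interpolation $L^{3,1}=(L^1,L^6)_{4/5,1}$ combined with the Sobolev embedding $H^1\hookrightarrow L^6$ then yields $\|f\|_{L^{3,1}}\leq C\|f\|_{L^1}^{1/5}\|f\|_{H^1}^{4/5}\leq C(1+\|\nabla h\|_{L^2})^{4/5}$, finishing the proof of \eqref{weight1}.

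For \eqref{weight2}, I first split $|\nabla f|\leq|\nabla h|+|\nabla\mu|$. The $|\nabla\mu|$ part is immediate: by the Gaussian decay of $\nabla\mu$, the convolution $|\cdot|^{-2}*|\nabla\mu|$ is bounded by $C\langle v\rangle^{-2}$, and Cauchy-Schwarz returns $C\|h\|_{L^2_{m/2-1}}\|\nabla h\|_{L^2_{m/2-1}}\leq C\|h\|_{H^1_{m/2}}\|\nabla h\|_{L^2_{m/2}}$. For the $|\nabla h|$ part in the far region, one uses $1_{\{|\cdot|\geq 1\}}|\cdot|^{-2}\in L^2$ and Young's inequality to bound the convolution with $|\nabla h|$ in $L^\infty$ by $C\|\nabla h\|_{L^2}$, producing $C\|\nabla h\|_{L^2}\|h\|_{L^2_{m/2}}\|\nabla h\|_{L^2_{m/2}}$ after Cauchy-Schwarz.

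The main obstacle is the near region of the $|\nabla h|$ contribution, since the product $|h(v)||\nabla h(v)|$ prevents a direct $L^2$ pairing. Using $\langle v\rangle^m\leq C\langle v_*\rangle^{m/2}\langle v\rangle^{m/2}$ on $\{|v-v_*|\leq 1\}$ and Young's inequality with $1_{\{|\cdot|\leq 1\}}|\cdot|^{-2}\in L^{6/5}$ (note $12/5<3$), one bounds $F(v):=\int_{|v-v_*|\leq 1}|v-v_*|^{-2}|\nabla h(v_*)|\langle v_*\rangle^{m/2}\,dv_*$ in $L^3$ by $C\|\nabla h\|_{L^2_{m/2}}$. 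A three-factor H\"older with exponents $(3,6,2)$ combined with $\|h\|_{L^6}\leq C\|h\|_{H^1}\leq C(1+\|\nabla h\|_{L^2})$ then gives the near-region bound $C(1+\|\nabla h\|_{L^2})\|\nabla h\|_{L^2_{m/2}}^2$. Collecting all contributions and using $\|\nabla h\|_{L^2_{m/2}}\leq C\|h\|_{H^1_{m/2}}$ yields the claim, with the factor $(1+\|\nabla h\|_{L^2}^2)$ in the statement being slightly larger than what we actually obtain. This H\"older chain is quite tight: it must use only the $H^1_{m/2}$-regularity of $h$ (never $H^2$), which constrains the choice of Sobolev exponents and of the ambient $L^{6/5}/L^3$ Young pair.
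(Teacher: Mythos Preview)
Your proof is correct and follows essentially the same near/far splitting strategy as the paper, with the same H\"older--Young--Sobolev chain for the near region of \eqref{weight2}. The only notable difference is that for the near region of \eqref{weight1} the paper avoids Lorentz spaces entirely: since $1_{\{|\cdot|\leq 1\}}|\cdot|^{-2}\in L^{6/5}(\R^3)$, ordinary Young's inequality gives $\|1_{\{|\cdot|\leq 1\}}|\cdot|^{-2}*f\|_{L^\infty}\leq C\|f\|_{L^6}\leq C\|\nabla f\|_{L^2}$ directly, and for \eqref{weight2} the paper works with $\nabla f$ throughout rather than splitting $\nabla f=\nabla h+\nabla\mu$, bounding $\|\nabla f\|_{L^2_{m/2}}\leq C(1+\|\nabla h\|_{L^2_{m/2}})$ at the end.
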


\begin{proof} 
For estimate~\eqref{weight1}, we bound  the integral over  $|v-v_{*}|\leq 1$  in the following way:
\begin{equation} \label{p1e}
\begin{split} 
& \iint_{|v - v_*| \le 1}  \bigg[ |v-v_{*}|^{-1}+|v-v_{*}|^{-2} \bigg]\, f(v_{*})\; |\nabla h(v)|^{2} \, \lr{v}^{m}\;dv_{*}dv \\
& \le {\big\|} |(|\cdot|^{-1}_{|\cdot|\leq 1}+|\cdot|^{-2}_{|\cdot|\leq 1})*f{\big \|}_{L^{\infty}}\|\nabla h\|^{2}_{L^{2}_{m/2}} \\
& \leq \||\cdot|^{-1}_{|\cdot|\leq 1}+|\cdot|^{-2}_{|\cdot|\leq 1}\|_{L^{\frac{6}{5}}}\|f\|_{L^{6}}\|\nabla h\|^{2}_{L^{2}_{m/2}} \\
&\leq C\,\|\nabla f\|_{L^{2}}\|\nabla h\|^{2}_{L^{2}_{m/2}} \\
&\leq C \, (1+\|\nabla h\|_{L^{2}})\|\nabla h\|^{2}_{L^{2}_{m/2}}.
\end{split}
\end{equation}
The integral over  $|v-{v_{*}}|\geq 1$ satisfies
\begin{equation} \label{p2e}
 \iint_{|v - v_*| \ge 1}  \bigg[ |v-v_{*}|^{-1}+|v-v_{*}|^{-2} \bigg]\, f(v_{*})\; |\nabla h(v)|^{2} \, \lr{v}^{m}\;dv_{*}dv
  \le C\|f\|_{L^{1}} \|\nabla h\|^{2}_{L^{2}_{m/2}}.
\end{equation}
 Then, estimate  \eqref{weight1} is a consequence of the bounds (\ref{p1e}) and (\ref{p2e}). 
\medskip

For estimate~\eqref{weight2}, using  $ \frac1{\sqrt{3}}\, \langle v\rangle \le \langle v_*\rangle  \le \sqrt{3} \, \langle v\rangle$
 when $|v-v_{*}|\leq 1$, we see that the integral over  $|v-v_{*}|\leq 1$ is bounded by
\begin{equation} \label{q1e}
\begin{split}
&\iint_{|v - v_*| \le 1} |v-v_{*}|^{-2} |\nabla f(v_{*})|\; |h(v)|\, |\nabla h(v)|\, \lr{v}^{m}  \;dv_{*}dv \\
& \le  {\big \|} |\cdot|^{-2}_{|\cdot| \le 1}*| \lr{\cdot}^{\frac{m}{2}}\nabla f| {\big \|}_{L^3} {\big \|}  |h| |\nabla h| \lr{\cdot}^{\frac{m}{2}} {\big \|}_{L^{\frac{3}{2}}}\\
&\leq  \|\nabla f\|_{L^2_{m/2}} {\big \|} |\cdot|^{-2}_{|\cdot|<1}{\big \|}_{L^{\frac{6}{5}}}\|h \|_{L^6} \|\nabla h\|_{L^2_{m/2}}\\
&\leq C\,(1+\|\nabla h\|_{L^2_{m/2}})\|\nabla h\|_{L^2}\|\nabla h\|_{L^2_{m/2}}.
\end{split}
\end{equation}
Since $|\cdot|^{-2}_{|\cdot| \ge 1}$ lies in $L^{2}$, the integral over  $|v-v_{*}|\geq 1$ is bounded in the following way:
\begin{equation} \label{q2e}
\begin{split}
&  \iint_{|v - v_*| \ge 1} |v-v_{*}|^{-2} |\nabla f(v_{*})|\; |h(v)|\, |\nabla h(v)|\, \lr{v}^{m}  \;dv_{*}dv\\
& \le  C\, \| \nabla f\|_{L^{2}} \|h\|_{L^{2}_{m/2}}  \|\nabla h\|_{L^2_{m/2}} \\
&  \leq C\, \|h\|_{L^{2}_{m/2}} \|\nabla h\|_{L^2_{m/2}} +   C\,  \|\nabla h\|_{L^2}  \|h\|_{L^{2}_{m/2}}\|\nabla h\|_{L^2_{m/2}}.
\end{split}
\end{equation}
We get estimate (\ref{weight2}) by collecting the bounds (\ref{q1e}) and (\ref{q2e}). 
\end{proof}
\medskip

Next we estimate the terms $W_{1}$ and $W_{2}$. We start with the

\begin{prop}\label{E222}
Let $f \ge 0$ be such that $\int_{\R^3} f(v) \, dv =1$,   $\int_{\R^3} f(v)\, |v|^2 \, dv =3$, and such that 
$ \|f\|_{L^1_{15/2}} +  \| f\|_{L\,\log L} \le K$, 
for some $K>0$. We denote $h=f - \mu$. 
Then for all $m \ge 0$ and some constants $C^*(K)$, $C(K)>0$ depending only on $K$:
\begin{equation}\label{nnf}
W_{1} :=\lr{Q(f,\pa_{k}h),\lr{v}^{m}\pa_{k}h} \le  - \frac78\, C(K)\,\|\nabla^2 h\|^{2}
_{L^{2}_{m/2-3/2}} +  C^*(K)\, (1+\|\nabla h\|_{L^{2}}^2) \, \|\nabla h\|^{2}_{L^{2}_{m/2}} . 
\end{equation}
\end{prop}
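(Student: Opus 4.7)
\emph{Plan.} The strategy is to integrate by parts in $W_1$, extract a coercive quadratic-in-$\nabla^2 h$ term that dominates the weighted $H^2$ seminorm, and then bound every remainder either by absorption into this coercive reserve or via Proposition~\ref{E22}. Writing $Q(f,\pa_k h) = \nabla\cdot([a*f]\nabla\pa_k h - [b*f]\pa_k h)$, integrating by parts against $\lr{v}^m\pa_k h$, and expanding $\nabla\lr{v}^m = m\lr{v}^{m-2}v$, I obtain the decomposition $W_1 = W_{1,1}+W_{1,2}+W_{1,3}+W_{1,4}$, where
\begin{itemize}
\item $W_{1,1} := -\sum_k\int \lr{v}^m(a*f):\nabla\pa_k h\otimes\nabla\pa_k h\,dv$ is the main coercive piece;
\item $W_{1,2} := -m\sum_k\int \lr{v}^{m-2}\,((a*f)\nabla\pa_k h)\cdot v\,\pa_k h\,dv$;
\item $W_{1,3} := \sum_k\int \lr{v}^m (b*f)\cdot\nabla\pa_k h\,\pa_k h\,dv$;
\item $W_{1,4} := m\sum_k\int \lr{v}^{m-2}(b*f)\cdot v\,(\pa_k h)^2\,dv$.
\end{itemize}

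Corollary~\ref{coercivity2} (with weight $m$) gives $W_{1,1} \le -C(K)\|\nabla^2 h\|^2_{L^2_{m/2-3/2}}$; I keep $\tfrac{7}{8}$ of this as the advertised negative term and reserve $\tfrac{1}{8}C(K)\|\nabla^2 h\|^2_{L^2_{m/2-3/2}}$ for absorbing errors. The decisive observation for $W_{1,2}$ is the algebraic identity $a(v-v_*)(v-v_*)=0$, which allows one to replace $(a*f)(v)\,v$ by $\int a(v-v_*)\,v_*\,f(v_*)\,dv_*$: without this step, a naive Young splitting would produce the forbidden weight $\lr{v}^{m+1}$ on $|\nabla h|^2$, whereas with it the extraneous power of $v$ is transferred onto $v_*$ and paid by one moment of $f$. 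Cauchy--Schwarz / Young then splits $W_{1,2}$ into an absorbable $\eta\|\nabla^2 h\|^2_{L^2_{m/2-3/2}}$ plus an integral of exactly the same shape as the left-hand side of~\eqref{weight1}, but with $\lr{v_*}f(v_*)$ in place of $f(v_*)$, which the argument of Proposition~\ref{E22} bounds by $C(K)(1+\|\nabla h\|_{L^2})\|\nabla h\|^2_{L^2_{m/2}}$.

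The terms $W_{1,3}$ and $W_{1,4}$ are handled analogously via $|(b*f)(v)|\le C\int f(v_*)|v-v_*|^{-2}dv_*$ and a direct appeal to~\eqref{weight1}. Integrating $W_{1,3}$ by parts once more to symmetrise $\nabla\pa_k h\,\pa_k h = \tfrac12\nabla(\pa_k h)^2$ exposes, via $\nabla\cdot(b*f)=-8\pi f$, the residual $4\pi\int f|\nabla h|^2\lr{v}^m\,dv$. I split $f=\mu+h$: the Maxwellian contribution is $\lesssim\|\nabla h\|^2_{L^2}$ since $\mu\lr{v}^m$ is bounded, and for the $h$ part the Gagliardo--Nirenberg inequality $\|u\|^2_{L^3}\le C\|u\|_{L^2}\|\nabla u\|_{L^2}$ yields
$$\int |h|\lr{v}^m|\nabla h|^2\,dv \le \|h\|_{L^3}\,\|\lr{v}^{m/2}\nabla h\|^2_{L^3} \le C(1+\|\nabla h\|_{L^2})\,\|\nabla h\|_{L^2_{m/2}}\bigl(\|\nabla^2 h\|_{L^2_{m/2-3/2}} + \|\nabla h\|_{L^2_{m/2}}\bigr).$$
A final Young inequality absorbs the $\|\nabla^2 h\|$ factor into the reserve, and the bound $(1+\|\nabla h\|_{L^2})^2\le 2(1+\|\nabla h\|^2_{L^2})$ produces the quadratic factor of the statement.

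The main obstacle is precisely the weight bookkeeping just described: a direct integration by parts against $\lr{v}^m$ overshoots by one power of $\lr{v}$, because the kernel $|v-v_*|^{-1}$ of $a$ does not by itself suppress the extra $v$ coming from $\nabla\lr{v}^m$. The rescue is the orthogonality $a(z)z=0$, a structural feature specific to the Landau collision operator, which reroutes the excess weight onto moments of $f$ rather than weighted norms of $\nabla h$.
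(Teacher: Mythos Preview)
Your overall strategy---extract the coercive piece via Corollary~\ref{coercivity2}, absorb cross terms into a reserve of it, and bound the rest by Proposition~\ref{E22}---is the right one, and matches the paper. But there is a genuine weight-accounting gap in your treatment of the residual $4\pi\int f|\nabla h|^2\lr{v}^m\,dv$. Applying Gagliardo--Nirenberg to $u=\lr{v}^{m/2}\nabla h$ yields $\|u\|_{L^3}^2\le C\|u\|_{L^2}\|\nabla u\|_{L^2}$, and $\|\nabla u\|_{L^2}$ contains $\|\lr{v}^{m/2}\nabla^2 h\|_{L^2}=\|\nabla^2 h\|_{L^2_{m/2}}$, \emph{not} $\|\nabla^2 h\|_{L^2_{m/2-3/2}}$ as you wrote. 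The resulting second-derivative term carries $3/2$ too many powers of $\lr{v}$ and cannot be absorbed into the reserve $\tfrac18 C(K)\|\nabla^2 h\|^2_{L^2_{m/2-3/2}}$. The missing $\lr{v}^{-3/2}$ has to come from somewhere; in the paper it comes from the hypothesis $\|f\|_{L^1_{15/2}}\le K$, which you never use. Concretely, the paper splits $\lr{v}^m=\lr{v}^{3/2}\cdot\lr{v}^{m/2}\cdot\lr{v}^{m/2-3/2}$, bounds
\[
\int f|\nabla h|^2\lr{v}^m\,dv\;\le\;\|f\|_{L^3_{3/2}}\,\|\nabla h\|_{L^2_{m/2}}\,\bigl\|\nabla\bigl((\nabla h)\lr{\cdot}^{m/2-3/2}\bigr)\bigr\|_{L^2},
\]
and interpolates $\|f\|_{L^3_{3/2}}\le\|f\|_{L^1_{15/2}}^{1/5}\|f\|_{L^6}^{4/5}\le C(K)(1+\|\nabla h\|_{L^2})^{4/5}$. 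This is exactly where the $15/2$ in the hypothesis enters.

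A second, more minor, point concerns $W_{1,2}$. The $a(z)z=0$ device is correct, but your claim that what remains is ``exactly the shape of \eqref{weight1} with $\lr{v_*}f(v_*)$ in place of $f(v_*)$'' and is bounded by $C(K)(1+\|\nabla h\|_{L^2})\|\nabla h\|^2_{L^2_{m/2}}$ is not quite right: rerunning the proof of Proposition~\ref{E22} with $\lr{\cdot}f$ requires $\|\lr{\cdot}f\|_{L^6}\lesssim\|\nabla f\|_{L^2_1}$ rather than $\|\nabla f\|_{L^2}$, so you do not get the unweighted factor $(1+\|\nabla h\|_{L^2})$ back out. The paper avoids this entirely by one more integration by parts: since $a_{ij}=a_{ji}$, moving the $\pa_j$ off $\pa_j\pa_k h$ in $\sum_{i,j,k}\int(a_{ij}*f)(\pa_j\pa_k h)(\pa_k h)\,\pa_i\lr{\cdot}^m\,dv$ reproduces the same integral with $i\leftrightarrow j$, so the cross term equals $-\tfrac12$ times terms containing only $(\pa_k h)^2$ against $(b_i*f)\,\pa_i\lr{\cdot}^m$ or $(a_{ij}*f)\,\pa_i\pa_j\lr{\cdot}^m$. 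These carry weight at most $\lr{v}^{m-1}$ on $|\nabla h|^2$ and are handled directly by \eqref{weight1} with no structural identity needed.
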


\begin{proof} 
Using an integration by parts, we see that
\[
\begin{split}
W_{1}&=\lr{Q(f,\pa_{k}h),   \lr{\cdot}^m\, \pa_{k}h}\\
&=-{\Big (}\sum_{k,i,j} \int (a_{ij}*f) (\pa_{j}\pa_{k}h)[\pa_{i}( \lr{\cdot}^m\, \pa_{k}h)]dv {\Big )}+{\Big (}\sum_{k,i}\int 
(b_{i}*f)(\pa_{k} h)[\pa_{i}(  \lr{\cdot}^m\, \pa_{k}h)] dv {\Big )}\\
&=- {\Big (}\sum_{k} \int (a*f):(\nabla\pa_{k} h)\otimes (\nabla\pa_{k}h) \,  \lr{\cdot}^m \;dv
-\frac{1}{2} \sum_{k,i} \int (b_{i}*f) \,[\pa_{k}h]^2 \pa_{i} \,  \lr{\cdot}^m\, dv\\
&-\frac{1}{2}\int \sum_{i,j,k}(a_{ij}*f) \,[\pa_{k}h]^2\, \pa_{j}\pa_{i} \lr{\cdot}^m \,dv {\Big )}+ {\Big (}\frac{1}{2}\sum_{k,i} \int (b_{i}*f) \, [\pa_{k} h]^2\, \pa_{i}  \lr{\cdot}^m \,  dv\\
&+4\pi \sum_{k} \int f\, [\pa_{k} h]^2 \,  \lr{\cdot}^m\, dv{\Big )} .
\end{split}
\]
Thanks to Corollary 2.1, the first term of the expression above satisfies 
\[
\sum_{k} \int (a*f):(\nabla\pa_{k} h)\otimes (\nabla\pa_{k}h) \,  \lr{\cdot}^m \;dv\geq C(K)\,\|\nabla^2 h\|^{2}_{L^{2}_{m/2-3/2}}.
\]
Then, thanks to  H\"{o}lder's inequality and Sobolev embedding (${\dot{H}}^1 \subset L^6$), \par
 
\[
\begin{split}
& {\Big |} \sum_{k} \int f\,[\pa_{k} h]^2 \,  \lr{\cdot}^m\, dv{\Big |}\leq C^* \|f\|_{L^3_{3/2}} \|\na h\|_{L^2_{m/2}}\|\nabla((\na h)\lr{\cdot}^{m/2-3/2})\|_{L^{2}} \\
&  \le  C^* \|f\|_{L^1_{15/2}}^{1/5} \,  \|f\|_{L^6}^{4/5}  \|\na h\|_{L^2_{m/2}} \bigg( \|\nabla^2 h\|_{L^{2}_{m/2-3/2}} 
+ \|\nabla h\|_{L^2_{m/2 - 5/2}}\bigg)  \\
& \le C^*(K)\,  \|\nabla f\|_{L^2}^{4/5}  \|\na h\|_{L^2_{m/2}}  \|\nabla^2 h\|_{L^{2}_{m/2-3/2}}  
 +  C^*(K)\,  \|\nabla f\|_{L^2}^{4/5}  \|\na h\|_{L^2_{m/2}}^2 \\
&  \le  \frac{C^*(K)}8 \, \|\nabla^2 h\|^{2}_{L^{2}_{m/2-3/2}} + C^*(K)  \,(1 + \|\nabla h\|_{L^2})^{8/5}  \|\na h\|_{L^2_{m/2}}^2
 + C^*(K)   \,(1 + \|\nabla h\|_{L^2})^{4/5}  \|\na h\|_{L^2_{m/2}}^2  .
\end{split}
\]
Using the estimates above and Proposition~\ref{E22}, eq. (\ref{weight1}),  we end up with estimate (\ref{nnf}).

\end{proof}

We now turn to  the

\begin{prop}\label{E223}
Let $f \ge 0$ be such that $\int_{\R^3} f(v) \, dv =1$,   $\int_{\R^3} f(v)\, |v|^2 \, dv =3$, and such that 
$ \|f\|_{L^1_{45}} +  \| f\|_{L\,\log L} \le K$ 
for some $K>0$. We denote $h=f - \mu$. Let $C(K)$ be the same constant as in Proposition~\ref{E222} and $0\leq m\leq 76$.
Then there exists some constant $C^*(K)$ depending only on $K$ such that:
\begin{equation}\label{nnf3}
\begin{split}
W_{2} &:= \lr{Q(\pa_{k} f,h), \lr{v}^{m}\, \pa_{k}h}    \le   \frac{3C(K)}{8}\, \|\nabla^2 h\|^{2}
_{L^{2}_{m/2-3/2}} \\
& +\,C^*(K)\,  (1+\|\nabla h\|^{2}_{L^2}) \|h\|_{H^{1}_{m/2}}\|\nabla h\|_{L^2_{m/2}}.
\end{split}  
\end{equation}
\end{prop}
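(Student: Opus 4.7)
The plan is to mirror the structure used for $W_1$ in Proposition \ref{E222} and to produce the $\|\nabla^2 h\|_{L^{2}_{m/2-3/2}}^{2}$ contribution with a coefficient strictly less than $C(K)/2$, so that it can later be absorbed by the $-\frac{7}{8}C(K)\|\nabla^2 h\|_{L^{2}_{m/2-3/2}}^{2}$ coming from \eqref{nnf}. First I would integrate by parts against the test function $\langle v\rangle^{m}\partial_{k}h$ to write
\[
W_{2}= -\sum_{k,i,j}\int (a_{ij}*\partial_{k}f)(\partial_{j}h)\,\partial_{i}(\langle v\rangle^{m}\partial_{k}h)\,dv + \sum_{k,i}\int (b_{i}*\partial_{k}f)\,h\,\partial_{i}(\langle v\rangle^{m}\partial_{k}h)\,dv,
\]
and distribute the outer derivative as $\partial_{i}(\langle v\rangle^{m}\partial_{k}h)=\langle v\rangle^{m}\partial_{i}\partial_{k}h + m\,v_{i}\langle v\rangle^{m-2}\partial_{k}h$. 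This yields four pieces: two ``principal'' ones carrying a factor of $\nabla^{2}h$, and two ``lower order'' ones involving only first derivatives of $h$ paired with the weight derivative.

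For the lower-order pieces, which schematically have the form $\iint |v-v_{*}|^{-\alpha}|\nabla f(v_{*})||h(v)||\nabla h(v)|\langle v\rangle^{m-1}|v|\,dv_{*}dv$ with $\alpha\in\{1,2\}$, I would apply Proposition \ref{E22} (estimate \eqref{weight2}) directly when $\alpha=2$, and the easier $|z|^{-1}$ analog (proved by the same near/far split, using Young's inequality with $|z|^{-1}_{|z|\le 1}\in L^{6/5}$ on the near region and $\|\nabla f\|_{L^{2}}\le 1 + \|\nabla h\|_{L^{2}}$ on the far region) when $\alpha=1$. This accounts for the $C^{*}(K)(1+\|\nabla h\|^{2}_{L^{2}})\|h\|_{H^{1}_{m/2}}\|\nabla h\|_{L^{2}_{m/2}}$ piece of the target bound.

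For the two principal terms, I would first transfer the $\partial_{k}$ via $a_{ij}*\partial_{k}f=(\partial_{k}a_{ij})*f$ (resp.\ $b_{i}*\partial_{k}f=(\partial_{k}b_{i})*f$), so that both reduce to convolutions of a $|z|^{-2}$ kernel against $f$ (resp.\ a $|z|^{-3}$-kernel-times-$f$ piece that is handled by keeping the derivative on $f$) paired with $|\nabla h||\nabla^{2}h|\langle v\rangle^{m}$. Then I would extract the factor $|\nabla^{2}h(v)|\langle v\rangle^{m/2-3/2}$ by Cauchy--Schwarz in $v$; the remaining weighted $L^{2}$ norm of the convolution is controlled as in Proposition \ref{E22}: near $v_{*}=v$, Young's inequality with $|z|^{-2}_{|z|\le 1}\in L^{6/5}$ together with the embedding $\dot{H}^{1}\hookrightarrow L^{6}$ gives $\|f\|_{L^{6}}\le C(1+\|\nabla h\|_{L^{2}})$, while far away one uses $\|f\|_{L^{1}}$ and the weighted moment $\|f\|_{L^{1}_{45}}\le K$. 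A final application of $ab\le \varepsilon a^{2}+\varepsilon^{-1}b^{2}$ with $\varepsilon$ small enough yields precisely $\frac{3C(K)}{8}\|\nabla^{2}h\|^{2}_{L^{2}_{m/2-3/2}}$, the remainder being folded into the second summand of \eqref{nnf3}.

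The main obstacle is the weight bookkeeping: the Cauchy--Schwarz split redistributes $\langle v\rangle^{m}$ asymmetrically as $\langle v\rangle^{(m+3)/2}\otimes \langle v\rangle^{(m-3)/2}$, so the residue involves $\|\nabla h\|_{L^{2}_{(m+3)/2}}$ rather than the target $\|\nabla h\|_{L^{2}_{m/2}}$. Re-expressing it through $\|h\|_{H^{1}_{m/2}}$ requires interpolation with the weighted $L^{1}$ moment $\|f\|_{L^{1}_{45}}\le K$, and this is exactly the role of the restriction $m\le 76$ in the hypothesis (so that $(m+3)/2$ remains below the threshold dictated by $\ell=45$ in the interpolation).
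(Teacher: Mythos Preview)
Your overall architecture---integrate by parts, split into two principal terms (with $\nabla^2 h$) and two lower-order terms, then absorb via Young's inequality---matches the paper. The treatment of the lower-order pieces via Proposition~\ref{E22} is fine. The gap is in your near-region estimate for the principal $a$-term, and in your diagnosis of where the constraint $m\le 76$ enters.

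For the principal $a$-term you use $|z|^{-2}_{|z|\le 1}\in L^{6/5}$ and $f\in L^{6}$ to put the convolution in $L^{\infty}$, which indeed forces the residual weight $\langle v\rangle^{3/2}$ onto $\nabla h$, leaving you with $\|\nabla h\|_{L^{2}_{(m+3)/2}}$. Your proposed fix---interpolate this against $\|f\|_{L^{1}_{45}}$ to recover $\|h\|_{H^{1}_{m/2}}$---does not work: a high-weight $L^{2}$ norm of $\nabla h$ is \emph{not} controlled by a lower-weight $H^{1}$ norm plus $L^{1}$ moments of $h$ (take $h$ supported far from the origin with large gradient; the moment bound gives no leverage on $\nabla h$). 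The paper avoids this by shifting the excess weight onto $f$ \emph{before} applying Young: on $|v-v_*|\le 1$ one has $\langle v\rangle^{3/2}\sim\langle v_*\rangle^{3/2}$, and then $|z|^{-2}_{|z|\le 1}\in L^{4/3}$ paired with $f\langle\cdot\rangle^{3/2}\in L^{4}$ puts the convolution in $L^{\infty}$ with the bound $\|f\|_{L^{4}_{3/2}}\le C\|f\|_{L^{1}_{15}}^{1/10}\|f\|_{L^{6}}^{9/10}$. This leaves only $\|\nabla h\|_{L^{2}_{m/2}}$ as required.

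Two further discrepancies. First, the principal $b$-term is paired with $|h||\nabla^2 h|\langle v\rangle^{m}$, not $|\nabla h||\nabla^2 h|\langle v\rangle^{m}$; the paper handles it by writing $f=h+\mu$, integrating the $\mu$-part by parts, and for the $h$-part using $|z|^{-2}_{|z|\le 1}\in L^{4/3}$ together with $\|h\|_{L^{4}_{3/2}}$ in the near region. Second, the restriction $m\le 76$ does not come from your weight-split issue; it arises in the \emph{far} region of this same $b$-term, where one interpolates $\|h\|_{L^{14/5}_{m/2+3/2}}\le C\|h\|_{L^{1}_{m/2+105/16}}^{8/35}\|\nabla h\|_{L^{2}_{m/2}}^{27/35}$ and needs $m/2+105/16\le 45$.
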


\begin{proof} 

Using an integration by parts, we see that
\[
\begin{split}
W_{2}&=\lr{Q(\pa_{k} f,h), \lr{v}^{m} \pa_{k}h}\\
&=\sum_{k,i,j} \int -(\pa_{k}a_{ij}*f)(\pa_{j}h) \bigg[(\pa_{i}\pa_{k}h) \, \lr{\cdot }^{m}+(\pa_{k}h) \pa_{i}  \lr{\cdot }^{m} \bigg] dv\\
&+\sum_{k,i}\int(b_{i}*\pa_{k}f)(h) {\bigg[} (\pa_{i}  \lr{\cdot }^{m})\pa_{k}h+ (\pa_i\pa_k h)   \lr{\cdot }^{m} \,{\bigg]} \; dv .
\end{split}
\]
Using first  Proposition~\ref{E22}, eq. (\ref{weight1}), we obtain the estimate 
\[
{\Big |}\sum_{k,i,j}\int (\pa_{k}a_{ij}*f)(\pa_{j}h)(\pa_{k}h) \,\lr{\cdot }^{m}\, dv {\Big |}\leq C (1+\|\nabla h\|_{L^{2}})\|\nabla h\|^{2}_{L^{2}_{m/2}}.
\]
 
Also, still treating separately $|v-v_*| \le 1$ and $|v-v_*|>1$, and observing that $|\cdot|^{-2}_{|\cdot|<1}\in L^{{4/3}}$, we 
compute
\[
\begin{split}
&{\Big |} \sum_{k,i,j} \int (\pa_{k}a_{ij}*f)(\pa_{j}h) (\pa_{i}\pa_{k}h) \lr{\cdot }^{m} dv{\Big |} \\
&\leq C^*{\Big (} \|  f\|_{L^4_{3/2}} \|\nabla h\|_{L^2_{m/2}} 
\|\nabla^2 h\|_{L^2_{m/2-3/2}}+\|f\|_{L^1_2}\|\nabla h\|_{L^2_{m/2-1/2}}
\|\nabla^2 h\|_{L^2_{m/2-3/2}}  {\Big )} \\
& \leq C^*\bigg( \|f\|_{L^1_{15}}^{1/10} \,  \|f\|_{L^6}^{9/10}  \|\na h\|_{L^2_{m/2}} +\|\nabla h\|_{L^2_{m/2-1/2}}\bigg)
\|\nabla^2 h\|_{L^{2}_{m/2-3/2}}\\
&   \le  \frac{C(K)}8 \, \|\nabla^2 h\|^{2}_{L^{2}_{m/2-3/2}} + C^*(K)  \,(1 + \|\nabla h\|_{L^2})^{9/5}  \|\na h\|_{L^2_{m/2}}^2 .
\end{split}
\]

\medskip

Then, thanks to Proposition~\ref{E22} again, 
\[
 {\Big |} \sum_{k,i}\int(b_{i}*\pa_{k}f)(h) (\pa_{i} \,\lr{\cdot }^{m}) \, \pa_{k}h \; dv {\Big |}
\leq C^* \, (1+\|\nabla h\|^{2}_{L^2}) \|h\|_{H^{1}_{m/2}}\|\nabla h\|_{L^2_{m/2}}.
\]
 
\par
Finally, we estimate  $|\sum_{k,i}\int(b_{i}*\pa_{k}f)(h) (\pa_i\pa_k h)\,\lr{\cdot }^{m}\,  \; dv|$. Recall that $f=h+\mu$. 
Thanks to an integration by parts, we have 
\[
\begin{split}
&\sum_{k,i}\int(b_{i}*\pa_{k}\mu)(h) (\pa_i\pa_k h)\,\lr{\cdot }^{m}\,  \; dv\\
&=-\sum_{k,i}\int(b_{i}*\pa_{k}\mu) (\pa_k h)\, \pa_i(h\lr{\cdot }^{m})\,  \; dv+8\pi\sum_{k}\int (\pa_{k}\mu) (\pa_k h)\, (h\lr{\cdot }^{m})\,dv.
\end{split}\]
Therefore, we deduce that
\[
 \bigg|\sum_{k,i}\int(b_{i}*\pa_{k}\mu)(h) (\pa_i\pa_k h)\,\lr{\cdot }^{m}\,  \; dv \bigg| \le C^*\, \|h\|_{H^1_{m/2}}\|\na h\|_{L^2_{m/2}}.
\]

 We now turn to the term $|\sum_{k,i}\int(b_{i}*\pa_{k}h)(h) (\pa_i\pa_k h)\,\lr{\cdot }^{m}\,  \; dv|$. 
The integral over $|v-v_{*}|\leq 1$ is bounded by 
\[
\begin{split}
&C^*\iint_{|v - v_*| \le 1} |v-v_{*}|^{-2} |\nabla h(v_{*})|\; |h(v)|\, |\nabla^2 h(v)|\, \lr{v}^{m}  \;dv_{*}dv \\
& \le C^* \, {\big \|} |\cdot|^{-2}_{|\cdot| \le 1}*| \lr{\cdot}^{\frac{m}{2}}\nabla h| {\big \|}_{L^4} {\big \|}  |h| |\nabla^2 h| \lr{\cdot}^{\frac{m}{2}} 
{\big \|}_{L^{\frac{4}{3}}}\\
&\leq C^*\, \|\nabla h\|_{L^2_{m/2}} {\big \|} |\cdot|^{-2}_{|\cdot|<1}{\big \|}_{L^{\frac{4}{3}}}\|h \|_{L^4_{3/2}} \|\nabla^2 h\|_{L^2_{m/2-3/2}}\\
&\leq C^*\, \|\nabla h\|_{L^2_{m/2}}\|h\|^{1/10}_{L^{1}_{15}} \|\nabla h\|^{9/10}_{L^2}\|\nabla^2 h\|_{L^2_{m/2-3/2}}\\
&\leq \frac{C(K)}{8}\|\nabla^2 h\|^{2}_{L^2_{m/2-3/2}}+ C^*(K)(1+\|\nabla h\|^{2}_{L^2})\|\nabla h\|^{2}_{L^2_{m/2}}.
\end{split}
\]
Notice now that $|\cdot|^{-2}_{|\cdot|\geq 1}\in L^{{14/9}}$. Then,
the integral over $|v-v_{*}|\geq 1$ is bounded by 
\[
\begin{split}
&C^*\iint_{|v - v_*| \geq 1} |v-v_{*}|^{-2} |\nabla h(v_{*})|\; |h(v)|\, |\nabla^2 h(v)|\, \lr{v}^{m}  \;dv_{*}dv \\
& \le {\big \|} |\cdot|^{-2}_{|\cdot| \geq 1}*| \nabla h| {\big \|}_{L^7} {\big \|}  |h| |\nabla^2 h| \lr{\cdot}^{m} 
{\big \|}_{L^{\frac{7}{6}}}\\
&\leq \|\nabla h\|_{L^2} {\big \|} |\cdot|^{-2}_{|\cdot| \geq 1}{\big \|}_{L^{\frac{14}{9}}}\|h \|_{L^{14/5}_{m/2+3/2}} \|\nabla^2 h\|_{L^2_{m/2-3/2}}\\
&\leq C^*\, \|\nabla h\|_{L^2}\|h\|^{8/35}_{L^{1}_{m/2+105/16}} \|\nabla h\|^{27/35}_{L^2_{m/2}}\|\nabla^2 h\|_{L^2_{m/2-3/2}}\\
&\leq \frac{C(K)}{8}\|\nabla^2 h\|^{2}_{L^2_{m/2-3/2}}+ C^*(K) (1+\|\nabla h\|^{2}_{L^2})\|\nabla h\|^{2}_{L^2_{m/2}},
\end{split}
\]
since $m/2+105/16<45$ when $m\leq 76$.
\medskip

Finally, we get estimate (\ref{nnf3}) by regrouping all the estimates above.
\end{proof}

\subsubsection{ Estimates for $W_{3}$ and $W_{4}$}  

We now estimate jointly  the terms $W_{3}$ and $W_{4}$. 

\begin{prop}\label{E234}
Let $f \ge 0$ be such that $\int_{\R^3} f(v) \, dv =1$,   $\int_{\R^3} f(v)\, |v|^2 \, dv =3$.
Then for all $m \ge 2$,  and some (absolute) constant $C>0$:
\begin{equation}\label{nnf34}
W_{3} + W_{4} : =\lr{Q(\pa_{k} h,\mu), \lr{\cdot }^{m} \,\pa_{k}h}  + \lr{Q( h, \pa_{k}\mu)\, \lr{\cdot }^{m}\, \pa_{k}h}
 \le  C\,\|\nabla h\|_{L^{2}}\|h\|_{H^{1}_{m/2}}. 
\end{equation}
\end{prop}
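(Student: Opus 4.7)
The central observation is that every integrand occurring in $W_3$ and $W_4$ carries an explicit Gaussian factor (either $\mu$, $\partial_k \mu$, or $\partial_j\partial_k\mu$), so that the polynomial weight $\lr{v}^m$ as well as any further polynomial picked up along the way can be absorbed freely by the Maxwellian. My plan is to apply the weak formulation of $Q$ (one integration by parts) to each piece, giving
\begin{align*}
W_3 &= -\sum_{i,j,k}\int (a_{ij}*\partial_k h)\,\partial_j\mu\,\partial_i(\lr{v}^m\partial_k h)\,dv + \sum_{i,k}\int (b_i*\partial_k h)\,\mu\,\partial_i(\lr{v}^m\partial_k h)\,dv,\\
W_4 &= -\sum_{i,j,k}\int (a_{ij}*h)\,\partial_j\partial_k\mu\,\partial_i(\lr{v}^m\partial_k h)\,dv + \sum_{i,k}\int (b_i*h)\,\partial_k\mu\,\partial_i(\lr{v}^m\partial_k h)\,dv.
\end{align*}
After writing $\partial_i(\lr{v}^m\partial_k h) = m v_i\lr{v}^{m-2}\partial_k h + \lr{v}^m\partial_i\partial_k h$, the pieces containing $\partial_i\partial_k h$ are the dangerous ones, since the target bound \eqref{nnf34} contains no $\nabla^2 h$; for these I would perform a \emph{second} integration by parts in $v_i$, redistributing the $\partial_i$ onto the convolution, the Gaussian, or the weight.

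Whenever $\partial_i$ falls on a convolution I would exploit the identities $\partial_i(a_{ij}*g) = b_j*g$ and $\sum_i \partial_i(b_i*g) = -8\pi g$ (the latter coming from the Poisson identity $\sum_i \partial_i b_i = -8\pi\delta_0$). Whenever $\partial_i$ falls on the Maxwellian or on $\lr{v}^m$, the outcome retains the form $P(v)\mu(v)$ with $P$ a polynomial. No $\nabla^2 h$ then survives, and every remaining integrand collapses to one of the model shapes
$$\int (a_{ij}*g_1)\,P(v)\mu(v)\,g_2\,dv,\quad \int (b_i*g_1)\,P(v)\mu(v)\,g_2\,dv,\quad \int g_1\,P(v)\mu(v)\,g_2\,dv,$$
with $g_1,g_2\in\{h,\partial_k h\}$. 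I would estimate each of these by splitting the convolution kernel at $|z|=1$ (the singular pieces $|z|^{-1}\mathbf{1}_{|z|\le 1}\in L^{6/5}$ and $|z|^{-2}\mathbf{1}_{|z|\le 1}\in L^{4/3}$, as already used in the proofs of Propositions~\ref{E22}--\ref{E223}, and the tails lying in $L^2\cap L^\infty$), applying Young's convolution inequality to bound $\|K*g_1\|_{L^p}\lesssim \|g_1\|_{L^2}+\|g_1\|_{L^1}$ for a suitable $p$, and finally pairing with $P\mu\, g_2$ via H\"older. Since $P\mu$ lies in every $L^r(\R^3)$ with $r\in[1,\infty]$, the Gaussian absorbs both $P$ and $\lr{v}^m$, and each contribution is dominated by $C\,\|\nabla h\|_{L^2}\|h\|_{H^1_{m/2}}$.

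The main obstacle is purely organizational: carrying out the second integration by parts carefully enough that no $\nabla^2 h$ remains in the final inequality, and grouping the (fairly numerous) resulting terms under the three model forms listed above. Once that bookkeeping is done, the Maxwellian does all the work of absorbing the weights, and estimate \eqref{nnf34} follows by summing the model-form bounds.
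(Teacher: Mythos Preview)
Your proposal is correct and matches the paper's own argument almost exactly: the paper likewise performs integrations by parts until no $\nabla^2 h$ remains (using precisely the identities $\partial_i a_{ij}=b_j$ and $\sum_i\partial_i b_i=-8\pi\delta_0$), reduces $W_3+W_4$ to integrals of the form $\int (|a|\text{ or }|b|)*(|h|\text{ or }|\nabla h|)\cdot|\nabla h|\,\mu^{1/2}\,dv$, splits the kernel at $|z|=1$, and closes with Young's inequality plus Gaussian absorption of all polynomial weights. Your three ``model forms'' are exactly the shapes the paper arrives at, so the remaining work is just the bookkeeping you describe.
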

\begin{proof} 
Using integrations by parts, we compute 
\[
\begin{split}
W_{3}&=\lr{Q(\pa_{k} h,\mu), \lr{\cdot }^{m}\,\pa_{k}h}\\
&=\sum_{k,j} \int ( b_{j}*\pa_{k}h)(\pa_{j}\mu)  \lr{\cdot }^{m}\,\pa_{k} h \,dv+
\sum_{k,i,j} \int ( \pa_{k} a_{ij}*h)(\pa_{i}\pa_{j}\mu)  \lr{\cdot }^{m}\,\pa_{k} h\, dv\\
&+\sum_{k} 8\pi\int \mu \; |\pa_{k} h|^{2}\,\lr{\cdot }^{m}\,dv-\sum_{k,i} \int (b_{i}*\pa_{k}h)(\pa_{i}  \mu) (\pa_{k} h) \,
\lr{\cdot }^{m}\,  dv,
\end{split}
\]
and 
\[
\begin{split}
W_{4}&=\lr{Q( h, \pa_{k}\mu), \lr{\cdot }^{m}\,\pa_{k}h}\\
&=\sum_{k,j} \int  (b_{j}*h)(\pa_{j}\pa_{k}\mu)(\pa_{k} h) \, \lr{\cdot }^{m}\, dv
+\sum_{k,i,j} \int (a_{ij}*h)(\pa_{i}\pa_{j}\pa_{k}\mu) (\pa_{k} h)\, \lr{\cdot }^{m}\,dv \\
&-\sum_{k,i} {\Big (}\int (b_{i}*\pa_{i} h)( \pa_{k}\mu) (\pa_{k} h)\, \lr{\cdot }^{m}\, dv+
\int (b_{i}* h)( \pa_{i}\pa_{k}\mu) (\pa_{k} h) \, \lr{\cdot }^{m}\, dv{\Big )}.\\
\end{split}
\]
As a consequence, using the elementary inequality $<v_*>^2 \,1_{|v - v_*| \le 1} \le |v-v_*|^2\, e^{v^2/4}\, 1_{|v - v_*| \le 1}$,
\[
\begin{split}
W_{3} + W_{4}  & \le C\, \bigg[\, \int |b| * |\nabla h|\, |\nabla h| \, \mu^{1/2} 
+ \int |b| * |h|\,\, |\nabla h| \, \mu^{1/2} + \int  |\nabla h|^2 \, \mu^{1/2} \bigg] \\
& \le C\, ||\,  [ |b|\,1_{|\cdot| \le 1}] * (|h| + |\nabla h|)\, ||_{L^2}\, ||\nabla h ||_{L^2}\\
& +\, C \iint_{|v - v_*|\ge 1} \lr{v_{*}}^{-2}\bigg[ |h(v_*)| + |\nabla h(v_*)| \bigg] \, |\nabla h(v)| \, \mu^{1/2}(v)\, dv dv_* + 
C\, \|\nabla h\|^{2}_{L^{2}}\\
&\le C\, \|\nabla h\|^{2}_{L^{2}} + C\, \|h\|_{L^{2}}\, \|\nabla h\|_{L^{2}}  + C\,  \|\nabla h\|_{L^{2}} \,( ||h||_{L^2_1} + 
 ||\nabla h||_{L^2_1} ) \\
&  \le  C\,  \|\nabla h\|_{L^{2}} \,( ||h||_{L^2_{m/2}} + ||\nabla h||_{L^2_{m/2}} )\leq C\,\|\nabla h\|_{L^{2}}\|h\|_{H^{1}_{m/2}},
\end{split}
\]
remembering that $m\geq 2$.
\end{proof}
 
\subsection{$L^{2}$ estimate}

\begin{prop}\label{E25p}
Let $f \ge 0$ be such that $\int_{\R^3} f(v) \, dv =1$,   $\int_{\R^3} f(v)\, |v|^2 \, dv =3$.
 and such that $ \|f\|_{L^1_{45}(\R^3)} +  \| f\|_{L\,\log L} \le K$ for some $K>0$. We denote $h=f - \mu$. 
 Let $C(K)$ be the same constant as in Proposition~\ref{E222}.
Then for all $m \ge 4$,  and some constant $C^*(K)$ depending only on $K$:
\begin{equation}\label{nnf35}
 \lr{Q(f, h),  \lr{\cdot }^{m}\, h}+\lr{Q( h, \mu),  \lr{\cdot }^{m}\, h}
\end{equation}
$$  \le   - C(K) \, \|\nabla h\|^{2}_{m/2-3/2} +
C^*(K)\, (1+\|\nabla h\|_{L^{2}})(\|\nabla h\|^{2}_{L^{2}_{m/2}}+\| h\|^{2}_{L^{2}_{m/2}}) . $$
\end{prop}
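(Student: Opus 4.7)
The proof follows the template of Propositions \ref{E222}, \ref{E223}, and \ref{E234}, adapted from the $\dot H^1$ to the $L^2$ level: testing $Q(f,h) + Q(h,\mu)$ against $\lr{v}^m h$ in place of $\lr{v}^m \pa_k h$ means the coercivity now controls $\|\nabla h\|_{L^2_{m/2-3/2}}$ rather than $\|\nabla^2 h\|_{L^2_{m/2-3/2}}$, so the overall structure is simpler (one derivative less throughout).

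First I would decompose $\lr{Q(f,h), \lr{v}^m h}$ by integrations by parts (mimicking the opening of Proposition \ref{E222}) as
\[
\lr{Q(f,h), \lr{v}^m h} = -\int (a*f):\nabla h \otimes \nabla h \,\lr{v}^m dv + R_1 + R_2 + R_3,
\]
where $R_1 := -\int (a*f):\nabla h \otimes h\, \nabla\lr{v}^m dv$, $R_2 := 4\pi \int f h^2 \lr{v}^m dv$ (coming from $\nabla\cdot b = -8\pi\delta_0$), and $R_3 := \tfrac12 \int (b*f)\cdot h^2 \nabla\lr{v}^m dv$. Applying Proposition \ref{coercivity} directly with $p = h$ (valid since the entropy and moment bounds give $A_j(f) \ge c(K)>0$) yields $-\int (a*f):\nabla h \otimes \nabla h \,\lr{v}^m dv \le -C(K)\|\nabla h\|^2_{L^2_{m/2-3/2}}$, which is exactly the coercive part of \eqref{nnf35}. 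For $R_2$, H\"older with exponent triple $(3,3,3)$, Sobolev $\dot H^1 \hookrightarrow L^6$, and the interpolation $\|f\|_{L^3_{3/2}} \le \|f\|_{L^1_{15/2}}^{1/5}\|f\|_{L^6}^{4/5} \le C(K)(1+\|\nabla h\|_{L^2})^{4/5}$ yield $R_2 \le C(K)(1+\|\nabla h\|_{L^2})(\|h\|^2_{L^2_{m/2}} + \|\nabla h\|^2_{L^2_{m/2}})$. For $R_1$ and $R_3$, I use $|\nabla\lr{v}^m| \le C\lr{v}^{m-1}$ together with an $L^\infty$ bound on $|a|*f$ and $|b|*f$ obtained by splitting $|v-v_*|\le 1$ (where $|\cdot|^{-k}_{|\cdot|\le 1}\in L^{6/5}$ for $k\in\{1,2\}$, controlled by $\|f\|_{L^6}$) versus $|v-v_*|\ge 1$ (controlled by $\|f\|_{L^1}=1$), as in the proof of Proposition \ref{E22}; finishing with Young's inequality on the crossed $\|\nabla h\|_{L^2_{m/2}}\|h\|_{L^2_{m/2}}$ terms gives a bound of the same form.

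For $\lr{Q(h,\mu), \lr{v}^m h}$, using the form \eqref{Qbis} one has $Q(h,\mu) = \sum_{i,j} (a_{ij}*h)\,\pa_{ij}\mu + 8\pi\, h\,\mu$. The Gaussian decay of $\pa^2\mu$ and $\mu$ absorbs the polynomial weight $\lr{v}^m$, and bounding $\|a*h\|_{L^\infty} \le C(\|h\|_{L^6} + \|h\|_{L^1}) \le C(\|\nabla h\|_{L^2} + \|h\|_{L^2_{m/2}})$ — the $L^1$ estimate via $\lr{v}^{-m/2}\in L^2$ since $m\ge 4$ — a final Young splitting produces
\[
\lr{Q(h,\mu), \lr{v}^m h} \le C(1+\|\nabla h\|_{L^2})\,(\|h\|^2_{L^2_{m/2}} + \|\nabla h\|^2_{L^2_{m/2}}).
\]
Summing all contributions yields \eqref{nnf35}. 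The main technical point that distinguishes this argument from the $\dot H^1$ analysis is that the coercive norm $\|\nabla h\|_{L^2_{m/2-3/2}}$ is \emph{weaker} than the commutator norm $\|\nabla h\|_{L^2_{m/2}}$ sitting on the right-hand side of \eqref{nnf35}, so no delicate cross-weight absorption into the dissipation is required: once the $L^\infty$ bounds on $a*f$ and $b*f$ are in hand and the short-range singularities are treated by Proposition \ref{E22}, only standard Young splittings are needed to close the argument, and the moment assumption $\|f\|_{L^1_{45}}\le K$ provides ample room for all the interpolation steps.
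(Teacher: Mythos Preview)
Your proposal is correct and follows essentially the same approach as the paper: the coercivity via Proposition~\ref{coercivity} with $p=h$, the near/far splitting for the convolution kernels, and the interpolation $\|f\|_{L^3_{3/2}}\le \|f\|_{L^1_{15/2}}^{1/5}\|f\|_{L^6}^{4/5}$ are exactly the ingredients the paper uses. Two minor remarks: your H\"older triple should read $(3,2,6)$ rather than $(3,3,3)$ (consistent with the $\|f\|_{L^3_{3/2}}$ and Sobolev $L^6$ you then invoke); and for $\lr{Q(h,\mu),\lr{v}^m h}$ the paper performs one integration by parts on the divergence form instead of using the non-divergence form $Q(h,\mu)=(a*h):\nabla^2\mu+8\pi h\mu$ as you do, but both routes lead to the same bound since all derivatives can be absorbed by the Gaussian.
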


\begin{proof} 
 Using integration by parts, we obtain the decomposition: 
\[
\begin{split}
\lr{Q(f, h),  \lr{\cdot }^{m}\,h}&=-\int (a*f):(\nabla h)\otimes (\nabla h) \,  \lr{\cdot }^{m}\,\;dv\\
&-\sum_{i,j}\int (a_{ij}*f) (\pa_{j} h) h (\pa_{i}  \lr{\cdot }^{m}\,) dv
+\sum_i \int (b_{i}*f)( h)[\pa_{i}( \lr{\cdot }^{m}\, h)] dv\\
&=: -\mathbb{E}_1-\mathbb{E}_2+\mathbb{E}_3. \\
\end{split}
\]
Using Proposition 2.1 (and keeping in mind the arguments used in the proof of Corollary 2.1), we see that
\begin{equation}\label{coercivity-l2}
\mathbb{E}_{1}\geq C(K)\, \|\nabla h\|^{2}_{m/2-3/2} .
\end{equation}
Using then the same computations as in the proof of Proposition~\ref{E22}, we see that
\begin{equation}\label{U1b}
|\mathbb{E}_2| \le C\,  \|\nabla f\|_{L^{2}}\, \|h\|_{L^{2}_{m/2}}\|\nabla h\|_{L^{2}_{m/2}} 
+ C\, ||f||_{L^1}  \, \, \|h\|_{L^{2}_{m/2}}\|\nabla h\|_{L^{2}_{m/2}}  
\end{equation}
$$ \le 
 C  \, (1+\|\nabla h\|_{L^{2}}) \, \|h\|_{L^{2}_{m/2}}\|\nabla h\|_{L^{2}_{m/2}}. $$
Similarly 
 \begin{equation}\label{U1bb}
|\mathbb{E}_3|\le 
 C  \, (1+\|\nabla h\|_{L^{2}})(\|h\|_{L^{2}_{m/2}}\|\nabla h\|_{L^{2}_{m/2}}+\| h\|^{2}_{L^{2}_{m/2}}).
\end{equation}

Using again an integration by parts, we see also that 
\begin{equation}\label{U2}
|\lr{Q( h, \mu),  \lr{\cdot }^{m}\, h}|
 ={\Big |}\sum_{i} \int {\Big (}\sum_{j}-(a_{ij}*h)(\pa_{j}\mu)  [\pa_{i} ( \lr{\cdot }^{m}\, h)]+
(b_{i}*h)( \mu)[\pa_{i} ( \lr{\cdot }^{m}\, h)] {\Big )} dv {\Big |}
\end{equation}
$$
 \le C\, \int (|a| +|b|) * |h|\, (|h| + |\nabla h|) \, \mu^{1/2} \, dv $$
 $$ \le C\, \|\,  [(|a| + |b|)\,1_{|\cdot| \le 1}] * |h| \, \|_{L^2}\,(\|h \|_{L^2} +  \|\nabla h \|_{L^2})$$
$$ +\, C \iint_{|v - v_*|\ge 1} \lr{v_{*}}^{-1} |h(v_*)|  \, \bigg[ |h(v)| +  |\nabla h(v)| \bigg] \, \mu^{1/4}(v)\, dv dv_* $$
$$ \le C\, \bigg( \|h\|_{L^{2}}  +  \|\nabla h\|_{L^{2}} \bigg) \, \|h\|_{L^2_2}, $$
where $|v-v_{*}|^{-1}\mathrm{1}_{|v-v_*|\ge1}\leq \lr{v_{*}}^{-1}\lr{v}$ is used.
\medskip

\par
Collecting all terms and remembering that $m \ge 4$, we conclude the proof of Proposition~\ref{E25p}. 
\end{proof}

\subsection{End of the proof of Proposition \ref{mainresult2}}

 For the end of the proof, we perform the computations for a smooth $C^2_t(\mathcal{S})$ solution $f\ge 0$ of Landau equation with Coulomb potential (\ref{landau}) -- (\ref{13d}).
 We should in fact repeat here the process of approximation presented in the proofs of Theorem \ref{maintheorem1} and Proposition \ref{localwell}. We do not write it for the sake of readability, since no new
argument is used to deal with the approximation process. 
\medskip

 We first observe that thanks to the   assumptions of Theorem~ \ref{mainresult2}
 and Lemma~\ref{ml}, there exists a constant $K>0$ such that
 \[
 \sup_{t>0} (\|f(t)\|_{L^{1}_{45}}+\|f(t)\|_{L\log L})\leq K.
 \]
 Then we compute
 (for $4\le  m \le 76$) the quantity $\frac{1}{2}\frac{d}{dt}\|\nabla h\|^{2}_{L^{2}_{m/2}}$.  By
 using the computations (\ref{Eqpah}), (\ref{weighted-h11}) and Proposition  \ref{E222}, Proposition \ref{E223}, Proposition \ref{E234}, we end up with the estimate
 \begin{equation}\label{weighted-h13}
 \frac{1}{2}\frac{d}{dt}\|\nabla h\|^{2}_{L^{2}_{m/2}} + \frac{C(K)}2 \,\|\nabla^2 h\|^{2}_{L^{2}_{m/2-3/2}}
\leq C^*(K) \, (1+\|\nabla h\|^{2}_{L^2}) \|h\|_{H^{1}_{m/2}}\|\nabla h\|_{L^2_{m/2}}.
\end{equation}

Then, multiplying eq.~\eqref{Eqh} by $ \lr{\cdot }^{m}$, and integrating with respect to $v$, we compute
\begin{equation}\label{h11}
\frac{1}{2}\frac{d}{dt}\| h\|^{2}_{L^{2}_{m/2}}=\lr{Q(f, h),   \lr{\cdot }^{m}\, h}+\lr{Q( h, \mu),   \lr{\cdot }^{m}\, h}.
\end{equation}  
Using Proposition \ref{E25p} and computation (\ref{h11}),  we get the differential inequality   
\begin{equation}\label{h11-in}
\frac{1}{2}\frac{d}{dt}\| h\|^{2}_{L^{2}_{m/2}}+ C(K)\,\|\nabla h\|^{2}_{m/2-3/2}\leq 
C^*(K) \, (1+\|\nabla h\|_{L^{2}})(\|\nabla h\|^{2}_{L^{2}_{m/2}}+\| h\|^{2}_{L^{2}_{m/2}}).
\end{equation}  

 Patching together inequalities \eqref{h11-in} and
  \eqref{weighted-h13}, we finally obtain  the differential inequality 
\begin{equation}\label{weightedH1}
 \frac{1}{2}\frac{d}{dt}\| h\|^{2}_{H^{1}_{m/2}} + \frac{C(K)}2 \,\|\nabla  h\|^{2}
_{H^{1}_{m/2-3/2}} 
 \le C^*(K)\,
 (1+\|\nabla h\|_{L^{2}}^2)\|  h\|^{2}_{H^{1}_{m/2}}.
\end{equation} 
We emphasize that from the proof of Proposition  \ref{E222}, Proposition \ref{E223}, Proposition \ref{E234} and Proposition \ref{E25p}, 
the constants $C^*(K)$, $C(K)>0$ in the above inequality only depend on $K$ such that
 $\|f_0\|_{L^1_{55}} + \|f_0\|_{L\log L} \le K$. 
\smallskip 

Thanks to Proposition \ref{interpH1a}, we know that, for some $C, C_3,C_4 >0$, and some $k_3>2/5$ (we take $l=55$, $\theta = 15/4+7$ and $ q_{l,\theta} \sim -3.79$ with the notations of Lemma \ref{ml}, then $k_3>3$), 
\beno 
 \frac{C(K)}2\|\nabla  h\|^{2}
_{H^{1}_{1/2}}\ge C_3\, \|h\|_{H^1_{2}}^{\f{14}5}\|h\|_{L^1_{15/4+7}}^{-\f45}-C\|h\|_{L^1_{-3/2}}^2\ge   C_4\,(1+t)^{k_3}\|h\|_{H^1_{2}}^{\f{14}5} -C\|h\|_{L^2_2}^2. 
\eeno 
In the inequality above and in the rest of the proof, we do not make explicit the (existing) dependence of $C, C_3,C_4 >0$, and $k_3>2/5$ with respect to $K$. 
\medskip

 Denoting $Y^2(t):=\|h(t)\|_{H^1_{2}}^2$, we therefore get the differential inequality (for some $C_5>0$ only depending on $K$):
\ben\label{WODE}\frac{d}{dt}Y^2(t)+C_4\,(1+t)^{k_3}\,Y(t)^{\f{14}5}\le C_5\, (Y^4(t)+Y^2(t)).\een 
Remembering that $\|h_{0}\|_{L^{1}_{45}}$  is bounded
and that the initial condition is supposed to satisfy  $ \|h_0\lr{\cdot}^2\|_{\dot{H^1}}\le \epsilon_0 \ll 1$, we see that 
by interpolation,  the differential  inequality (\ref{WODE}) is complemented with the initial datum
 $Y^2(0)=\tilde{\epsilon} \ll 1$ (note that here and in the sequel,  the way in which $\tilde{\epsilon}$ is small depends in fact (only) on $K$).

 We now consider $T^*:=\sup\{t>0\,\,|\,\,Y^4(t)\le Y^2(t)\} = \sup\{t>0\,\,|\,\,Y(t)\le 1\} $.
 For $t\in[0,T^*]$, the differential  inequality 
\beno \frac{d}{dt}Y^2(t)\le 2\,C_5\, Y^2(t)\eeno
holds. 
It implies that $\forall t\in[0, T_1 := (2\,C_5)^{-1}|\log (\f12|\log\tilde{\epsilon}|^{-1}\tilde{\epsilon})^{-1}|]$, 
the inequality $Y^2(t)\le \f12|\log\tilde{\epsilon}|^{-1}\le1$ also holds.  Thus, $T^*\ge T_1$.
\medskip

 We now use a contradiction argument in order to show that solutions of inequality \eqref{WODE} globally exist.
 If the set $\{t>0\, |\, Y^2(t)=|\log \tilde{\epsilon}|^{-1}\}$ is empty, then  this is automatically true.
If it is not the case, we define 
$T^{**}:=\inf\{t>0\,\,|\,\,Y^2(t)=|\log \tilde{\epsilon}|^{-1}\}$. Then there exists a time $T_2$ defined by
 $T_2:=\sup\{t\le T^{**}\,\,|\,\,Y^2(t)=\f12|\log \tilde{\epsilon}|^{-1}\}$. 
Because of the definition of $T_1$ and $T_2$, we see that  $T^* > T^{**}> T_2 \ge T_1$, and $Y^2(t)\, |\log \tilde{\epsilon}| \in [1/2, 1]$ when $t\in[T_2,T^{**}]$.  In particular, in the interval $[T_2,T^{**}]$, we have
\beno \frac{d}{dt}Y^2(t)+C_4(1+T_1)^{k_3}Y(t)^{\f{4}5} Y^2(t)\le 2\,C_5\, Y^2(t), \eeno
where
\beno  (1+|\log \tilde{\epsilon}|)^{k_3}|\log \tilde{\epsilon}|^{-\f25} = O_{\tilde{\epsilon} \to 0} \bigg[  C_4(1+T_1)^{k_3}Y(t)^{\f{4}5} \bigg].\eeno
It implies that if $k_3>\f25$ and $\tilde{\epsilon} >0$ is sufficiently small (depending on $K$ again),
 $C_4(1+T_1)^{k_3}Y(t)^{\f{4}5}\ge 2C_5$. Then $Y^2$ is decreasing on the interval $[T_2,T^{**}]$, so that   $Y^2(T^{**})\le Y^2(T_2)=\f12|\log \tilde{\epsilon}|^{-1}$. This is not compatible   with the definition of $T^{**}$, which entails that the set $\{t>0\,\,|\,\,Y^2(t)=|\log \tilde{\epsilon}|^{-1}\}$ is empty.
As a consequence,  we get the global existence for solutions of \eqref{WODE}, and those solutions moreover satisfy the bound
$\sup_{t\ge0}Y^2(t)\le |\log \tilde{\epsilon}|^{-1}$.
\medskip

They satisfy therefore the following modified differential inequality
\beno \frac{d}{dt}Y^2(t)+C_4(1+t)^{k_3}Y(t)^{\f{14}5}\le 2\,C_5 \,Y^2(t).\eeno
Splitting the interval $[0,\infty]$ into the two sets  $\{t>0\,\,|\,\,C_4(1+t)^{k_3}Y(t)^{\f{14}5}\le 4\,C_5\, Y^2(t)\}$ and 
$\{t>0\,\,|\,\,C_4(1+t)^{k_3}Y(t)^{\f{14}5}>  4\,C_5\, Y^2(t)\}$, we conclude that for some constant $C>0$ (remembering that $k_3>3$)
\beno Y(t)\le C\,(1+t)^{- \f54k_3} \le C\,(1+t)^{- \f{15}4}. \eeno
\medskip

 We recall that the estimates obtained in this subsection hold for a smooth solution of the Landau equation  (\ref{landau}) -- (\ref{13d}), and that,
 as in Proposition \ref{thmappro}, they also hold uniformly w.r.t. $\epsilon \in ]0,1[$ for smooth solutions of 
the approximated equation (\ref{approeq}) -- (\ref{ae}), with suitably mollified initial datum (we recall that such solutions are known to exist and be unique).  It is then possible to pass to the (weak weighted $L^1$) limit in the final estimate  
$$  \|h^\epsilon\|_{H^1_2} \le C\,(1+t)^{- \f54k_3} \le C\,(1+t)^{- \f{15}4}, $$ and get the existence of the strong global nonnegative solution to  Landau equation  (\ref{landau}) -- (\ref{13d}) announced in the Proposition \ref{mainresult2}  . The uniqueness is obtained 
thanks to a variant of the arguments used  in the proof of Theorem \ref{maintheorem1} and Proposition \ref{localwell}.

\section{Investigation of a potential blowup}

Here, we prove Proposition \ref{des}, which provides estimates describing the potential blowup (in $\dot{H}^1$) of solutions to eq. (\ref{landau}) -- (\ref{13d}).
\medskip

 We present first the following (abstract) Lemma:

\begin{lem} \label{LemODI2}
Let $\bar{T} >0$, $X, H$ be  $C^1$ functions from $[0, \bar{T}[$ to $\R_+$, $C_1, C_3, k_1>0$, $k_2> 7/2$, and $D := - H'$. We  suppose that $X$ is  solution to the following ordinary differential inequality for all $\eta>0$ small enough:
\ben\label{InqetaX} \f{d}{dt}X^2(t)+C_1(1+t)^{k_{1}}X(t)^{\f{14}5}\le  \eta\,C_3 \,D(t)\,X(t)^{\f{14}5}+\mathcal{B}(\eta)(1+t)^{-k_{2}}, \een
and that $ \lim_{t \to  \bar{T} } X(t) = +\infty$.  In the estimate above, $\mathcal{B}$ is a continuous decreasing nonnegative function.
\medskip

 Then the following quantitative estimates hold for some $c, C>0$ (depending on $C_1, C_3, k_1, k_2$ and $\mathcal{B}$) and $k:= \min( \frac{2k_2 -7}5, k_1)$, when 
$\bar{T} -t>0$  is small enough:
\ben\label{blowest1} &&X(t)\ge C\, (H(t)- \bar{H})^{-\f54} \quad\mbox{while}\quad H(t)-\bar{H}\ge C\,(\bar{T}-t)\, (1+\bar{T})^{k}, \\
&& \inf_{s\in[t,\bar{T}]} X(s)\le \bigg(\mathcal{B}(c\,[\bar{T}-t])\, \f{2}{C_1} \,(1+\bar{T})^{-(k_{1}+k_{2})}\bigg)^{\f5{14}}.\label{blowest2} \een 
In the estimates above, we used the notation $\bar{H} := \lim_{t \to \bar{T}} H(t)$.
\end{lem}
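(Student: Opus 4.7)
\medskip

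\noindent\textbf{Proof sketch.} The plan is to treat the two estimates by complementary uses of the differential inequality \eqref{InqetaX}: a \emph{fixed} choice of $\eta$ yields \eqref{blowest1} via Lemma \ref{edo1}, while a \emph{$t$-dependent} choice, combined with division by $X^{14/5}$, yields \eqref{blowest2}. In both cases the decisive input is the fact that $X(s)\to+\infty$ as $s\to\bar T$, which controls the boundary terms when one passes to the limit in $s$.

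For \eqref{blowest1}, I would pick any fixed $\eta_0>0$ small enough that $\eta_0 C_3\le 1$ and that \eqref{InqetaX} holds; setting $B^*:=\mathcal{B}(\eta_0)$ and using $\eta_0 C_3\, D\, X^{14/5}\le D\, X^{14/5}$ puts \eqref{InqetaX} into the form \eqref{ODE} of Lemma \ref{edo1}, yielding the monotonicity
\[
\f{d}{dt}\!\left[H(t)-\f52\!\left(X^2(t)+B^*(1+t)^{1-k_2}\right)^{-2/5}\right]+C_6(1+t)^k\le 0.
\]
Integrating from $t$ to $s\in[t,\bar T[$ and letting $s\to\bar T$, where $X(s)\to\infty$ forces the correction term to vanish and $H(s)\searrow\bar H$, produces
\[
\bar H+C_6\int_t^{\bar T}(1+u)^k\,du\le H(t)-\f52\!\left(X^2(t)+B^*(1+t)^{1-k_2}\right)^{-2/5}.
\]
Dropping the nonnegative subtracted term and using $(1+t)^k\ge\tfrac12(1+\bar T)^k$ for $\bar T-t$ small immediately gives $H(t)-\bar H\ge C(\bar T-t)(1+\bar T)^k$. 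Keeping it produces $(X^2+B^*(1+t)^{1-k_2})^{-2/5}\le\tfrac25(H(t)-\bar H)$; since $H(t)\to\bar H$ while $B^*(1+t)^{1-k_2}$ stays bounded, the term $X^2$ must dominate the correction, yielding $X(t)\ge C(H(t)-\bar H)^{-5/4}$.

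For \eqref{blowest2}, I would keep $\eta$ free in \eqref{InqetaX} and divide through by $X^{14/5}(t)$, noting the algebraic identity $X^{-14/5}\f{d}{dt}X^2=-\f52\f{d}{dt}(X^{-4/5})$ which converts the derivative term into a total derivative. Integrating from $t$ to $s$ and sending $s\to\bar T$ (so that $X^{-4/5}(s)\to 0$), then dropping the nonnegative $\f52 X^{-4/5}(t)$ term on the left, one obtains
\[
C_1\int_t^{\bar T}(1+u)^{k_1}\,du\le \eta C_3[H(t)-\bar H]+\mathcal{B}(\eta)\int_t^{\bar T}(1+u)^{-k_2}X^{-14/5}(u)\,du.
\]
The plan is then to bound $X^{-14/5}(u)\le (\inf_{[t,\bar T]}X)^{-14/5}$, use $\int_t^{\bar T}(1+u)^{k_1}du\ge(1-\epsilon)(\bar T-t)(1+\bar T)^{k_1}$ and $\int_t^{\bar T}(1+u)^{-k_2}du\le(1+\epsilon)(\bar T-t)(1+\bar T)^{-k_2}$ for $\bar T-t$ small, and absorb $\eta C_3[H(t)-\bar H]\le \eta C_3 H(0)$ into the LHS by choosing $\eta=c(\bar T-t)$ with $c$ small. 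Dividing by $\bar T-t$ then leaves precisely the claimed bound on $\inf_{[t,\bar T]}X$.

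The main obstacle is the calibration of $\eta$ in the second estimate: it must be taken small enough that $\eta C_3(H(t)-\bar H)$ is absorbed into the dissipation $C_1\int(1+u)^{k_1}du$, yet not so small that $\mathcal{B}(\eta)$ (which in the concrete Landau application blows up like $\exp\{7\eta^{-450/14}\}$) becomes uncontrollable. The linear dependence of the dissipation integral on $\bar T-t$ dictates the scaling $\eta\sim(\bar T-t)$, and the factor $\mathcal{B}(c(\bar T-t))$ appearing in \eqref{blowest2} is the direct manifestation of this trade-off between smallness of $\eta$ and the concomitant growth of $\mathcal{B}(\eta)$.
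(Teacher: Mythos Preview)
Your proof is correct and follows essentially the same approach as the paper's own proof. The paper likewise fixes $\eta=C_3^{-1}$ and invokes Lemma~\ref{edo1} to obtain \eqref{blowest1} by integrating the monotonicity formula up to $\bar T-\delta$ and letting $\delta\to 0$, and for \eqref{blowest2} it divides \eqref{InqetaX} by $X^{14/5}$, integrates, bounds $\int D$ by $H(0)$, and chooses $\eta=c(\bar T-t)$ exactly as you describe; the only cosmetic difference is that the paper uses the lower endpoint bounds $(1+t)^{k_1}$ and $(1+t)^{-k_2}$ rather than your $(1\pm\epsilon)(1+\bar T)^{\pm k_i}$, which are equivalent for $\bar T-t$ small.
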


\begin{proof}
We can first use Lemma \ref{edo1} with $\eta = C_3^{-1}$ (up to choosing $C_3>0$ large enough), $B^* := \mathcal{B}(\eta)$. Estimate  \eqref{InqM1} implies that  
for $\delta>0$ small enough, and some $C_6>0$ given by Lemma \ref{edo1},
\ben\label{Xtblow1}  H(\bar{T} - \delta)+C_6\int_{t}^{\bar{T} - \delta} (1+s)^{k}ds\le H(t)-\f52[X(t)^2 + B^*\, (1+t)^{1-k_2}]^{-\f25}, \een 
which is enough to get the first part of estimate \eqref{blowest1}, by letting $\delta \to 0$.
\medskip

Using again estimate (\ref{Xtblow1}) and letting $\delta \to 0$, we see that 
$$ X(t)^2  +  B^*\, (1+t)^{1-k_2} \ge \bigg[ \frac25\, (H(t) - \bar{H}) \bigg]^{-5/2}. $$
By definition, $\lim_{t \to \bar{T}} H(t) =  \bar{H}$ so that $ (1+t)^{1-k_2}  = o_{t \to \bar{T}}   (H(t) - \bar{H})^{-5/2}$, 
and we get therefore the second part of estimate (\ref{blowest1}).
\medskip

In order to prove estimate \eqref{blowest2}, we go back to assumption \eqref{InqetaX}. Dividing it  by $X^{-\f{14}5}$, we get
\beno  -\f52\f{d}{dt}X(t)^{-\f45}+C_1(1+t)^{k_{1}} \le \eta \, C_3\,D(t)+\mathcal{B}(\eta)(1+t)^{-k_{2}}X(t)^{-\f{14}5}, \eeno 
which gives
\ben\label{Xtblow2} &&\f52X(t)^{-\f45}  -  \f52X(\bar{T} - \delta)^{-\f45} +C_1\int_t^{\bar{T}-\delta}(1+s)^{k_{1}}ds\notag\\&&\le \eta \, C_3\, H(0)+\mathcal{B}(\eta)\int_t^{\bar{T} - \delta}(1+s)^{-k_{2}}X(s)^{-\f{14}5}\, ds.\een
From this, we deduce  (remember that $ \lim_{t \to \bar{T}} X(t) = +\infty$) that
\beno  \sup_{s\in[t, \bar{T}]} \big(X^{-\f{14}5}(s)\big)(1+ t)^{-k_{2}}(\bar{T}-t)\,
\mathcal{B}(\eta)\ge C_1 (1+t)^{k_{1}}(\bar{T}-t)-\eta\, C_3\, H(0).\eeno
Let $\eta:=c\,(\bar{T}-t)$, for $c>0$ chosen small enough. Then for $\bar{T}-t$ small enough, we get
\beno  \sup_{s\in[t,\bar{T}]} X^{-1}(s) \ge \bigg(\big(\mathcal{B}(c\,[\bar{T}-t])\big)^{-1} \f{C_1}{2}\,(1+ \bar{T})^{k_{1}+k_{2}}\bigg)^{\f5{14}},\eeno 
which yields estimate \eqref{blowest2}.
\end{proof}
\medskip

\begin{rmk} 
The entropy $H$ plays an important role in the estimates giving hints about the way that a possible blowup could occur for
eq. \eqref{landau}.  We
observe that this quantity is continuous (with respect to time, on $[0, \bar{T})$) under our assumptions (namely when 
 $f:= f(t,v)$ is a nonnegative solution to eq. \eqref{landau} lying in $C([0, \bar{T});\dot{H}^1)\cap L^\infty_{loc}([0, \bar{T});L^1_5)$).
From the inequality
\beno |a\log a-b\log b|\le C_p|a-b|^{1/p}+|a-b|\log^{+}(|a-b|)+2\sqrt{a\wedge b}\sqrt{|a-b|},
\eeno
which is proved in Proposition \ref{logineqprop} (for $p>1$, $C_p=p/(e(p-1))$ and $a\wedge b=\min\{a,b\}$), used
when  $p:= 4/3$, we see that (for $0 \le t_1, t_2 <\bar{T}$)
$$ |H(t_1)-H(t_2)|\le C\,\bigg(\|f(t_1)-f(t_2)\|_{L^{\f32}_{3}}^{\f34}  +\, \|f(t_1)-f(t_2)\|^2_{L^2}+\|f(t_1)-f(t_2)\|_{L^{\f32}}^{\f12}\|f(t_1)+f(t_2)\|_{L^{\f32}_{3}}^{\f12} \bigg). $$
 Thanks to the interpolation inequalities (based on H\"older's inequality and Sobolev embeddings), 
\beno \|f\|_{L^{\f32}_3}\le \|f\|_{L^1_5}^{\f35} \, \|f\|_{L^6}^{\f25}\le C\, \|f\|_{L^1_5}^{\f35} \, \|\na f\|_{L^2}^{\f25},
\qquad 
 \|f\|_{L^{2}}\le \|f\|_{L^1}^{\f25}\, \|\na f\|_{L^2}^{\f35},
\eeno
 we finally get the estimate (for some $C$ depending on $\|f\|_{L^\infty_t (L^1_5)}$ and $ \|f\|_{L^\infty_t (\dot{H}^1)}$, those norms being taken on $[0, \sup(t_1,t_2)]$)
\ben\label{compactnessH1}\qquad  |H(t_1)-H(t_2)| \le C\, \bigg( \|f(t_1)-f(t_2)\|_{\dot{H}^1}^{\f3{10}}+ \|f(t_1)-f(t_2)\|_{\dot{H}^1}^{\f65} +\,\|f(t_1)-f(t_2)\|_{\dot{H}^1}^{\f15}\bigg), \een
which is sufficient  to conclude.
 \end{rmk}

We are in a position to prove the result.

\begin{proof}[Proof of Proposition \ref{des}] 
We begin with this proof in the case when $f$ is a smooth and quickly decaying (when $|v| \to \infty$) solution to eq. (\ref{landau}) -- (\ref{13d}) on a time interval $[0, \bar{T}[$. Thanks to estimate (\ref{EE1}) in Proposition~\ref{esqml}, we see that assumption 
(\ref{InqetaX}) holds with  $\mathcal{B}(x) := C_2\,x^{-13} \exp\{7x^{-\f{450}{14}}\}$. We can then apply Lemma \ref{LemODI2} 
to $X(t) := \|\nabla f(t)\|_{L^2}$. 

\medskip

We now briefly explain how to prove Proposition \ref{des} without assuming that $f$ is smooth and quickly decaying (when $|v| \to \infty$).
We consider an interval of time on which $f\in L^{\infty}_t(H^1 \cap L^1_{55})$. We  first observe that thanks to Proposition \ref{localwell}, we have $f \in L^2_t(H^2_{-3/2})$. 
 Since $f\in L^\infty_t(L^1_{55})$,  we see that thanks to Proposition \ref{interpH1a}, $f\in  L^2_t(H^1_{12})$.   Using  now estimate \eqref{weightedH1} and the uniqueness result,
 we see then that 
$f \in L^{\infty}_t(H^1_{19/2}) \cap L^2_t(H^2_8)$ on  all compact intervals of $[t_0,\bar{T}[$ where $t_0>0$.

Using the equation satisfied by second order derivatives of $f$ and computing the time derivative of the square of the $H^2$ norm of $f$,
we can use Corollary \ref{coercivity2} and estimates like in Propositions \ref{E22} to \ref{E234}, 
and end up with the bound
 \beno  &&\f12\f{d}{dt} \|f\|_{\dot{H}^2}^2+ C(K)\|\na f\|_{\dot{H}^2_{-\f32}}^2\le C (\|f\|_{H^2}^2+\|f\|_{L^2}^4)\|f\|_{H^2_{\f92}}^2.\eeno
Thanks to the fact $f \in  L^2_t(H^2_8)$, we see that 
 $f \in L^{\infty}_t(H^2) \cap L^2_t(H^3_{-\frac32})$   on  all compact intervals of $[t_0,\bar{T}[$ where ${t}_0>0$.
\medskip

Using the estimates above for solutions $f^\epsilon$ of the approximated problem (\ref{approeq}) -- (\ref{ae}),
 we get that $f^\epsilon$ is bounded  in $L^{\infty}(H^2)$ on any interval $[t_1, t_2] \subset ]0, \bar{T}[$. Using also
 \eqref{compactnessH1},
this is sufficient to pass to the limit in the inequality 
\ben\label{IntegratedMFe} \mathcal{M}^\epsilon(t_2)+C_6\,\int_{t_1}^{t_2}  (1+t)^{k}dt\le  \mathcal{M}^\epsilon(t_1),
 \een
where $\mathcal{M}(t)=H^\epsilon(t)-\f52\bigg(\|h^\epsilon(t)\|_{\dot{H}^1}^2+ B^*\, (1+t)^{- k_2 +1}\bigg)^{-\f25}$.
We end up with the inequality (\ref{MonoFom}) in an "integrated in time" form:
\ben\label{IntegratedMF} \mathcal{M}(t_2)+C_6\,\int_{t_1}^{t_2}  (1+t)^{k}dt\le  \mathcal{M}(t_1).
 \een
The same construction can be used to obtain estimates (\ref{Xtblow1}) and (\ref{Xtblow2}) and conclude the proof of Proposition \ref{des}
when $f\in L^{\infty}_t(H^1 \cap L^1_{55})$ on all compact intervals of $[0, \bar{T}[$.
\end{proof}

\section{Appendix}

In this appendix, we present some results which are used in the paper. We start with interpolation results and properties of Lorentz spaces.

\subsection{Dyadic decompositions}

 We  start by recalling some aspects of the Littlewood-Paley decomposition.
Let $B_{{4}/{3}}:=  \{x \in \R^3 ~|~ |x| < {4}/{3} \} $
and $ R_{3/4, 8/3}:=     \{ x \in \R^3 ~|~ {3}/{4} < |x| < {8}/{3} \} $. Then one introduces two radially symmetric functions $ \psi \in
C_0^{\infty}(B_{{4}/{3}})$ and $ \varphi \in C_0^{\infty}(R_{3/4, 8/3})$  which satisfy
\begin{eqnarray}\label{defpsivarphi} \psi, \varphi \ge0,\quad \mbox{and} \quad \psi(x) + \sum_{j\geq 0} \varphi(2^{-j} \,x) =1,~\qquad x \in \R^3.
  \end{eqnarray}

The dyadic operator $\mathcal{P}_j$ is defined  for $j\geq -1$ by
  \begin{eqnarray*} \mathcal{P}_{-1}f(x) :=  \psi(x)f(x),~~~~\qquad 
\mathcal{P}_{j}f(x) := \varphi(2^{-j}x)f(x)  ,~(j\geq 0). \end{eqnarray*}
We recall that $\mathcal{P}_{j}\mathcal{P}_{k}=0$ if $|j-k|>N_0$, for some $N_0 \in \N$.
\medskip


We present a  norm based on the dyadic decomposition which is equivalent to the usual norm of the   weighted Sobolev spaces $H^s_l(\R^3)$:

\begin{prop}\label{baslem3}(\cite{HE16}) Let $s,l\in \R$. Then for   $f\in H^s_l$,
$ \sum_{k=-1}^\infty 2^{2kl}\|\mathcal{P}_{k}f\|_{H^s}^2\sim \|f\|_{H^s_l}^2.$ 
\end{prop}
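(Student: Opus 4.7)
The plan is to exploit the simple geometric observation that on the support of $\mathcal{P}_j$ (for $j \ge 0$) the weight $\langle v\rangle^l$ is comparable to $2^{jl}$, while $\mathcal{P}_{-1}$ is supported on a bounded set where $\langle v\rangle \sim 1$; together with the finite-overlap property of the partition of unity $\{\psi, \varphi(2^{-k}\cdot)\}_{k\ge 0}$, this reduces matters to a statement about unweighted Sobolev norms of the pieces. I would start with $s=0$, where a direct computation gives
\[
\|f\|_{L^2_l}^2 = \int \langle v\rangle^{2l}|f(v)|^2\,dv \sim \sum_{k\ge -1}\int \langle v\rangle^{2l}|\mathcal{P}_k f(v)|^2\,dv \sim \sum_{k\ge -1} 2^{2kl}\|\mathcal{P}_k f\|_{L^2}^2,
\]
using $\sum_{k}|\varphi(2^{-k}v)|^2 + |\psi(v)|^2 \sim 1$ (by the partition of unity together with finite overlap) and $\langle v\rangle^{2l}\sim 2^{2kl}$ on $\mathrm{supp}\,\varphi(2^{-k}\cdot)$.

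For integer $s\ge 1$, two directions must be checked. For the upper bound $\sum_k 2^{2kl}\|\mathcal{P}_k f\|_{H^s}^2 \lesssim \|f\|_{H^s_l}^2$, I would apply Leibniz,
\[
\partial^\alpha\bigl(\varphi(2^{-k}v)\,f(v)\bigr) = \sum_{\beta\le\alpha}\binom{\alpha}{\beta}\,2^{-k|\beta|}(\partial^\beta\varphi)(2^{-k}v)\,\partial^{\alpha-\beta}f(v),
\]
and observe that on the support of $(\partial^\beta\varphi)(2^{-k}\cdot)$ we have $2^{-k|\beta|}\sim \langle v\rangle^{-|\beta|}$ and $2^{kl}\sim \langle v\rangle^l$. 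After squaring, multiplying by $2^{2kl}$, integrating, and summing over $k$ with finite overlap, this bounds the left-hand side by a sum of terms of the form $\int |\partial^\gamma f|^2\langle v\rangle^{2(l-(s-|\gamma|))}\,dv$ with $|\gamma|\le s$, which in turn is dominated by $\|f\|_{H^s_l}^2$ via the standard equivalence $\|f\|_{H^s_l}^2 \sim \sum_{|\gamma|\le s}\|\langle v\rangle^{l}\partial^\gamma f\|_{L^2}^2$ (itself obtained by expanding $\partial^\alpha(f\langle v\rangle^l)$ by Leibniz and using $|\partial^\delta\langle v\rangle^l|\lesssim \langle v\rangle^{l-|\delta|}$). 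The reverse inequality is symmetric: write $f=\sum_k \mathcal{P}_k f$, expand $\partial^\alpha(\langle v\rangle^l \mathcal{P}_k f)$ by Leibniz, exploit $\langle v\rangle^l\sim 2^{kl}$ on the support, and invoke finite overlap to sum the squares.

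The main obstacle is the case of non-integer $s\in\R$, where Leibniz is unavailable. I would handle this by interpolation: the desired equivalence having been established at every integer $s=m$, and both the weighted Sobolev scale $\{H^s_l\}_{s\in\R}$ and the scale $\{\sum_k 2^{2kl}\|\mathcal{P}_k\cdot\|_{H^s}^2\}$ being interpolation scales (of the same order) in $s$, the result at intermediate and negative $s$ follows. An alternative route, closer in spirit to the proof for integers, would be to replace the Leibniz expansion by a commutator estimate $[\langle D\rangle^s,\varphi(2^{-k}\cdot)]$ controlled via the symbolic calculus, combined with the $2^{kl}\sim \langle v\rangle^l$ localization, but the interpolation route is cleaner. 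In every case, no new idea is required beyond the localization $\langle v\rangle^l\sim 2^{kl}$ on $\{|v|\sim 2^k\}$ and the almost-orthogonality of the supports of the $\mathcal{P}_k$.
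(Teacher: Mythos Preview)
The paper does not actually prove this proposition: it is quoted from \cite{HE16} and no argument is given in the present text, so there is no ``paper's proof'' to compare your sketch against. Your outline is the standard route to such equivalences and is sound in its essentials. The $s=0$ case and the integer $s\ge 1$ case via Leibniz plus the localisation $\langle v\rangle \sim 2^{k}$ on $\mathrm{supp}\,\mathcal P_k$ and finite overlap are exactly right.

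Two small points where your sketch would need tightening if written out in full. First, you write that interpolation, once the equivalence is known ``at every integer $s=m$'', yields the result for all $s\in\R$, including negative $s$; but your Leibniz argument only treats $s\in\N$, so negative integers still require a separate step (duality $H^{-s}_l \simeq (H^{s}_{-l})'$, or a direct Fourier argument after reducing to the unweighted case via the equivalence $\|f\|_{H^s_l}\sim\|\langle v\rangle^l f\|_{H^s}$). Second, the interpolation step itself, while routine, needs a clean formulation: one convenient way is to check that the map $f\mapsto (2^{kl}\mathcal P_k f)_{k\ge -1}$ is bounded $H^{m}_l\to \ell^2(H^{m})$ at two integer endpoints, and that a retraction $(g_k)_k\mapsto \sum_k 2^{-kl}\widetilde{\mathcal P}_k g_k$ (with a fattened cutoff $\widetilde{\mathcal P}_k$) is bounded in the reverse direction, so that complex interpolation applies. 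None of this is a genuine obstacle, and for the applications in this paper (Propositions~\ref{interpL31} and~\ref{interpH1a}) only $s=1$ is actually used, where your Leibniz argument is already complete.
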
 
 
\medskip


\subsection{Definition, norms and quasi-norms of Lorentz spaces}\label{S:Lorentz} 

For the convenience of the readers and the sake of self content, we collect some facts about Lorentz spaces  
from~\cite{AdFu, SW} which are useful for us. 
Considering $\R^{n}$ with Lebesgue measure $|\cdot|$.
In Section 1, we define the norm in Lorentz space $L^{p,q},\;p \in[1,\infty[, q\in[1,\infty]$ (or $p=q=\infty$, using the convention $t^{1/\infty}=1,\;t\geq 0$)
\begin{equation}
 \|f\|_{L^{p,q}}:= 
 \left\{
 \begin{array}{l}
 \bigg(\int_0^{\infty} \big(t^{1/p}f^{**}(t) \big)^{q}\frac{dt}{t}\bigg)^{1/q},\;1\leq q<\infty\\
 \sup\limits_{t>0}\; t^{1/p}f^{**}(t), \;q=\infty,
 \end{array}
 \right. \tag{\ref{Lorentz}}
 \end{equation}
which is different from the following (commonly used) definition
 \begin{equation}\label{Lorentz2}
 \|f\|^{*}_{L^{p,q}}:= 
 \left\{
 \begin{array}{l}
 \bigg(\int_0^{\infty} \big(t^{1/p}f^{*}(t) \big)^{q}\frac{dt}{t}\bigg)^{1/q},\;1\leq q<\infty\\
 \sup\limits_{t>0}\; t^{1/p}f^{*}(t), \;q=\infty.
 \end{array}
 \right.
 \end{equation}

Here 
\[
f^{**}(t)=\frac{1}{t}\int_{0}^{t} f^{*}(s)ds,\;f^{*}(s)=\inf\{t\geq 0:a_{f}(t)\leq s\},
\]
where $a_{f}$ is the distribution function of $f$ given by 
\[
a_{f}(t)=|\{x\in\R^{n}:|f(x)|>t\}|.
\]

For $p\in (1,\infty)$ and $q\in [1,\infty]$, we note that the functional $\|\cdot\|^{*}_{L^{p,q}}$ is a norm only 
when $q\leq p$ and a quasi-norm otherwise, 
on the other hand $\|\cdot\|_{L^{p,q}}$ is always a norm. For $p\in (1,\infty)$ and $q\in [1,\infty]$, the following 
comparison inequality holds:
\[
\|f\|^{*}_{L^{p,q}} \leq \|f\|_{L^{p,q}} \leq \frac{p}{p-1} \|f\|^{*}_{L^{p,q}}.
 \]
Clearly for $1<p<\infty$, we have $\|f\|^{*}_{L^{p,p}}= \|f\|_{L^{p}}$ and thus $L^{p,p}=L^{p}$. 
For $p=1$ the situation is different (See also~\cite{AdFu} p. 224), one can indeed check that
\[
\|f\|_{L^{1,\infty}}=\sup_{t>0} t f^{**}(t)=\sup_{t>0}\int_{0}^{t} f^{*}(s)ds=\int_{0}^{\infty}f^{*}(s)ds=\|f\|_{L^{1}}.
\]
Finally,  for $p=\infty$ (See also~\cite{AdFu} p. 224),  one can also check that 
\[
\|f\|_{L^{\infty,\infty}}=\sup_{t>0}  f^{**}(t)=
\sup_{t>0}\frac{1}{t}\int_{0}^{t} f^{*}(s)ds=f^*(0)=\|f\|_{L^{\infty}} .
\]

\subsection{Inequalities and Interpolation}

We  begin with Sobolev embedding theorem and O'Neil inequality in Lorentz spaces.

\begin{prop}[see \cite{AdFu} and \cite{ONeil}]\label{oneil} 
(i). If $f\in H^1(\R^3)$, then $f\in L^{6,2}(\R^3)$ and
\beno \|f\|_{L^{6,2}(\R^3)} \le C\|f\|_{H^1(\R^3)}. \eeno

(ii). For $p_1,p_2,q_1,q_2\in[1,\infty]$ with $1/p=1/p_1+1/p_2$ and $1/q\le1/q_1+1/q_2$, there exists a computable constant $C$ depending only on $p_1,q_1,p_2,q_2$ such that
 \beno
 \|fg\|_{L^{p,q}}&\le& C\, \|f\|_{L^{p_1,q_1}}\|g\|_{L^{p_2,q_2}}.
  \eeno

 (iii). If  $f\in L^{p,q_1}, g\in L^{p',q_2}$ where  $p,p',q_1,q_2\in[1,\infty]$ such that $1/p+1/p'=1$ and $1/q_1+1/q_2\ge1$, then
 $f*g\in L^\infty$ and
 \beno \|f*g\|_{L^\infty}\le \|f\|_{L^{p,q_1}}\|g\|_{L^{p',q_2}}. \eeno
\end{prop}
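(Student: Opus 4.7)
The plan is to handle the three statements separately, working from the general toward the specific, since the Sobolev-type embedding in (i) is best seen as a corollary of the convolution inequality in (iii).

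For (iii), my plan is to reduce the convolution bound to a Hölder-type inequality in Lorentz spaces. Since the rearrangement (and hence every Lorentz norm) is invariant under translation and reflection, for each fixed $x \in \R^n$ the function $y \mapsto g(x-y)$ shares the Lorentz norms of $g$. I would then apply the standard product Hölder inequality $\|\varphi\psi\|_{L^{r,s}} \le C\,\|\varphi\|_{L^{p_1,q_1}}\|\psi\|_{L^{p_2,q_2}}$ (valid when $1/r = 1/p_1 + 1/p_2$ and $1/s \le 1/q_1 + 1/q_2$) with $r=s=1$, $p_1 = p$, $p_2 = p'$; the hypothesis $1/q_1 + 1/q_2 \ge 1$ is precisely what is required for the second index. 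This yields $|f\ast g(x)| \le \|f\cdot g(x-\cdot)\|_{L^1} \le C\,\|f\|_{L^{p,q_1}}\|g\|_{L^{p',q_2}}$, uniformly in $x$.

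For (ii), the O'Neil product inequality, I would follow the classical route based on rearrangements. The key ingredients are the pointwise bound $(fg)^*(t_1 + t_2) \le f^*(t_1)\, g^*(t_2)$ and Hardy's inequality in the form
\[
\int_0^\infty \Bigl(t^{\alpha}\int_0^t h(s)\,\frac{ds}{s}\Bigr)^{q}\,\frac{dt}{t} \le C\int_0^\infty \bigl(t^{\alpha} h(t)\bigr)^q\,\frac{dt}{t}
\]
for a suitable range of $\alpha$. Combining these with a careful splitting of $\int f^*(s)g^*(s)\,ds$ into contributions near and far from $t$ gives the stated inequality; a detailed exposition is in Adams--Fournier. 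For (i), my plan is to represent $f = G\ast u$, where $G$ is the Bessel kernel of order one in $\R^3$ and $u:=(\mathrm{Id}-\Delta)^{1/2}f \in L^2$ with $\|u\|_{L^2}\le \|f\|_{H^1}$. Since $G(x)\sim |x|^{-2}$ near $0$ and decays exponentially at infinity, one has $G \in L^{3/2,\infty}(\R^3)$. Applying a generalized Young/Hunt convolution inequality in Lorentz spaces,
\[
\|G\ast u\|_{L^{6,2}} \le C\,\|G\|_{L^{3/2,\infty}}\,\|u\|_{L^{2,2}},
\]
which is admissible since $1/6 = 2/3 + 1/2 - 1$ and $1/2 \le 0 + 1/2$, and using $L^{2,2} = L^2$, yields $\|f\|_{L^{6,2}} \le C\|f\|_{H^1}$.

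The main obstacle I anticipate is the bookkeeping with second indices and constants in the Hunt/O'Neil convolution theorem, together with the fact that this paper adopts the norm definition based on $f^{**}$ rather than the more common quasi-norm based on $f^*$; the two are equivalent when $p>1$, but one must keep track of the comparison constant $p/(p-1)$ throughout so that the final inequalities come out with clean constants. Once this is handled, all three statements reduce to routine applications of the rearrangement toolkit recalled in Subsection \ref{S:Lorentz}.
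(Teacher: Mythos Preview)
The paper does not actually prove this proposition: it is stated with a citation to Adams--Fournier and O'Neil and left without argument. Your sketch is a correct outline of the standard proofs found in those references.

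Two minor remarks. First, your argument for (iii) invokes (ii) with output exponent $p=1$; that endpoint case is really just the Hardy--Littlewood rearrangement inequality $\int|fg|\le\int_0^\infty f^*(t)g^*(t)\,dt$ followed by weighted H\"older on $(0,\infty)$ with the splitting $t^{1/p}\cdot t^{1/p'}=t$, and it is cleaner to present it that way (this also recovers the sharp constant $1$ stated in (iii), whereas routing through the general (ii) would only give a constant $C$). Second, the convolution bound you use for (i) is the full Young--O'Neil/Hunt inequality, which is strictly stronger than (iii); it is the right tool, and your index check $1+\tfrac16=\tfrac23+\tfrac12$, $\tfrac12\le 0+\tfrac12$ is correct.
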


\medskip

Next  we will prove some useful interpolation inequalities which are widely used throughout  the paper. 

\begin{prop}\label{interpL31} For $m\in\R$, and some constant $C>0$ depending only on $m$,
 \beno
\|f\|_{L^{3,1}_m}\le C\, \|f\|_{L^1_{5m+1}}^{\f15}\|f\|_{H^1}^{\f45}.\eeno
\end{prop}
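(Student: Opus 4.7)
The idea is to combine an unweighted Lorentz interpolation with a physical-space dyadic decomposition, using the extra room in the weight exponent $5m+1$ (versus the naive $5m$) to produce a summable geometric series.

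First I would establish the unweighted model inequality
\begin{equation}\label{unwint}
\|g\|_{L^{3,1}} \le C\, \|g\|_{L^1}^{1/5}\, \|g\|_{H^1}^{4/5}
\end{equation}
for $g \in L^1(\R^3) \cap H^1(\R^3)$. By Proposition \ref{oneil}(i) we have $\|g\|_{L^{6,2}} \le C\|g\|_{H^1}$, so it suffices to interpolate between $L^1$ and $L^{6,2}$. The real interpolation identity for Lorentz scales, $(L^1,L^{6,2})_{4/5,1} = L^{3,1}$, which matches $\tfrac{1}{3} = \tfrac{1}{5}\cdot 1 + \tfrac{4}{5}\cdot \tfrac{1}{6}$, yields exactly \eqref{unwint}. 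Alternatively one can argue by hand, splitting the defining integral $\int_0^\infty t^{1/3}g^{**}(t)\,\tfrac{dt}{t}$ at $t_0 := (\|g\|_{L^1}/\|g\|_{L^{6,2}})^{6/5}$, using $g^{**}(t)\le t^{-1}\|g\|_{L^1}$ on $[t_0,\infty)$ and $g^{**}(t)\le t^{-1/6}\|g\|_{L^{6,2}}$ on $(0,t_0]$.

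Next I introduce a smooth partition of unity $\{\phi_k\}_{k\ge 0}$ with $\phi_k$ supported in $\{2^{k-1} \le \langle v\rangle \le 2^{k+1}\}$ for $k\ge 1$ and $\phi_0$ in $\{\langle v\rangle \le 2\}$, satisfying $|\phi_k|+|\nabla\phi_k| \le C$ uniformly in $k$. Writing $f = \sum_{k\ge 0} f\phi_k$ and using the triangle inequality in $L^{3,1}$ together with $\langle v\rangle \sim 2^k$ on $\supp\phi_k$, one obtains
\[
\|f\|_{L^{3,1}_m} \le \sum_{k\ge 0} \|f\phi_k\|_{L^{3,1}_m} \le C\sum_{k\ge 0} 2^{km}\,\|f\phi_k\|_{L^{3,1}}.
\]
Applying \eqref{unwint} to $g = f\phi_k$, together with the bounds
\[
\|f\phi_k\|_{L^1} \le C\,2^{-k(5m+1)}\,\|f\|_{L^1_{5m+1}}, \qquad \|f\phi_k\|_{H^1} \le C\,\|f\|_{H^1}
\]
(the first because $\langle v\rangle^{5m+1} \sim 2^{k(5m+1)}$ on $\supp\phi_k$, regardless of the sign of $5m+1$; the second because $\phi_k$ and $\nabla\phi_k$ are bounded uniformly in $k$), gives
\[
2^{km}\|f\phi_k\|_{L^{3,1}} \le C\, 2^{km - k(5m+1)/5}\,\|f\|_{L^1_{5m+1}}^{1/5}\|f\|_{H^1}^{4/5} = C\, 2^{-k/5}\,\|f\|_{L^1_{5m+1}}^{1/5}\|f\|_{H^1}^{4/5}.
\]
Summing the geometric series $\sum_{k\ge 0} 2^{-k/5} < \infty$ closes the proof.

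The only non-routine step is the unweighted interpolation \eqref{unwint}; once that is in place, the dyadic argument is mechanical, and the precise weight exponent $5m+1$ on the right-hand side is exactly what makes the $k$-sum converge — the extra $+1$ inside the $L^1$ weight is consumed by the $2^{-k/5}$ factor above. The only step to execute carefully is checking that the triangle inequality holds for the Lorentz quasi-norm used in the paper (which is in fact a genuine norm since it is built from the sublinear maximal function $f^{**}$), so that the sum over $k$ is legitimate.
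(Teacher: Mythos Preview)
Your proof is correct and follows essentially the same two-step strategy as the paper: first the unweighted model inequality $\|g\|_{L^{3,1}}\le C\|g\|_{L^1}^{1/5}\|g\|_{L^{6,2}}^{4/5}$ (the paper proves it by the same splitting-at-$R$ argument you sketch as the alternative), then a physical-space dyadic decomposition $f=\sum_k \mathcal{P}_k f$ with $\langle v\rangle\sim 2^k$ on each piece.

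The one noteworthy technical difference is in how the $k$-sum is closed. You use the crude uniform bound $\|f\phi_k\|_{H^1}\le C\|f\|_{H^1}$ and then sum the geometric series $\sum_k 2^{-k/5}$ directly. The paper instead applies H\"older in $k$ (exponents $5/3$ and $5/2$) to obtain
\[
\sum_k 2^{km}\|\mathcal{P}_k f\|_{L^1}^{1/5}\|\mathcal{P}_k f\|_{H^1}^{4/5}
\le C\Big(\sum_k 2^{5km/3}\|\mathcal{P}_k f\|_{L^1}^{1/3}\Big)^{3/5}\Big(\sum_k \|\mathcal{P}_k f\|_{H^1}^2\Big)^{2/5},
\]
and then invokes the $\ell^2$ equivalence $\sum_k\|\mathcal{P}_k f\|_{H^1}^2\sim\|f\|_{H^1}^2$ (Proposition~\ref{baslem3} with $l=0$) for the second factor, while the first factor becomes a geometric series in $2^{-k/3}$. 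Your route is slightly more elementary since it avoids Proposition~\ref{baslem3} altogether; the paper's route would be the natural template if one wanted a sharper weight exponent, but here the outcome is identical.
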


\begin{proof} We split the proof into two parts. The first step is devoted to showing that
\beno \|f\|_{L^{3,1}}\le \|f\|^{\f15}_{L^1}\|f\|_{L^{6,2}}^{\f45}\le  C\,   \|f\|^{\f15}_{L^1}\| f\|_{H^1}^{\f45}.\eeno  
By the definition of Lorentz spaces, one gets
\beno \|f\|_{L^{3,1}}&=&\int_0^\infty t^{\f13}f^{**}(t) \f{dt}{t}\\
&\le&(\int_0^R (t^{\f16}f^{**}(t))^2 \f{dt}{t}\big)^{\f12}\big(\int_0^R t^{\f13} \f{dt}{t}\big)^{\f12}+ (\sup_{t>0}tf^{**}(t))\int_R^\infty t^{-\f23}\f{dt}{t} \\
&\le&  \|f\|_{L^{6,2}}R^{\f16}+\|f\|_{L^{1,\infty}}R^{-\f23}. \eeno
We conclude  by optimizing $R$ and by using the identity $\|f\|_{L^{1,\infty}}=\|f\|_{L^{1}}$(see 
subsection~\ref{S:Lorentz} and~\cite{AdFu} p. 224).

In next step, we extend the above result to the general case (the one with weights appearing in the norms) using a dyadic decomposition. We observe that 
\beno \|f\|_{L^{3,1}_m}&=&\|f\lr{\cdot}^m\|_{L^{3,1}}\le\sum_{k=-1}^\infty \|\mathcal{P}_kf\lr{\cdot}^m\|_{L^{3,1}}\le C\sum_{k=-1}^\infty \|\mathcal{P}_kf\|_{L^{3,1}}\|\mathcal{P}_k\lr{\cdot}^m\|_{L^{\infty,\infty}}\\
&\le &C\sum_{k=-1}^\infty \|\mathcal{P}_kf\|_{L^{3,1}}2^{km}\le C \sum_{k=-1}^\infty \big(2^{5km}\|\mathcal{P}_kf\|_{L^1}\big)^{\f15}\|\mathcal{P}_kf\|_{H^1}^{\f45}\\&\le &
C \bigg(\sum_{k=-1}^\infty 2^{\f53km}\|\mathcal{P}_{k}f\|_{L^1}^{\f13}\bigg)^{\f35}\bigg(\sum_{k=-1}^\infty \|\mathcal{P}_{k}f\|_{H^1}^2\bigg)^{\f25},
\eeno where we use O'Neil inequality \eqref{oneil}  and the fact $ \|f\|_{L^{\infty,\infty}}=\|f\|_{L^\infty}$(see 
subsection~\ref{S:Lorentz} and~\cite{AdFu} p. 224).
From this together with the computation
\beno \sum_{k=-1}^\infty 2^{\f53km}\|\mathcal{P}_{k}f\|_{L^1}^{\f13}\le C\sum_{k=-1}^\infty 2^{\f53km}2^{-\f13(5m+1)k}\|f\|_{L^1_{5m+1}}^{\f13}\le C\|f\|_{L^1_{5m+1}}^{\f13},\eeno   we finally get the inequality  
\beno \|f\|_{L^{3,1}_m}\le C\|f\|_{L^1_{5m+1}}^{\f15}\|f\|_{H^1}^{\f45}.
\eeno

\end{proof}

\begin{prop}\label{interpH1a}
For $m\in\R$, 
\beno
\|f\|_{H^1_m}\le C\, \|f\|_{L^1_{15/4+7m/2}}^{\f27}(\|f\|_{L^1_{-3/2}}+\|\na^2 f\|_{L^2_{-3/2}})^{\f{5}7},
\eeno
\beno
\|f\|_{H^1_m}\le C\, \|f\|_{L^1_{5/4+7m/2}}^{\f27}(\|f\|_{L^1_{-1/2}}+\|\na^2 f\|_{L^2_{-1/2}})^{\f{5}7},
\eeno
where $C>0$ is a constant depending only on $m$.
\end{prop}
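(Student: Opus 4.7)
The plan is to reduce both inequalities to the classical unweighted Gagliardo-Nirenberg estimate on $\mathbb{R}^3$ via the physical-space Littlewood-Paley decomposition of Proposition \ref{baslem3}. As a preliminary I would establish the unweighted form
\[
\|g\|_{H^1}\le C\,\|g\|_{L^1}^{2/7}\,(\|g\|_{L^1}+\|\nabla^2 g\|_{L^2})^{5/7},
\]
by combining the two standard interpolations $\|\nabla g\|_{L^2}\le C\|g\|_{L^1}^{2/7}\|\nabla^2 g\|_{L^2}^{5/7}$ and $\|g\|_{L^2}\le C\|g\|_{L^1}^{4/7}\|\nabla^2 g\|_{L^2}^{3/7}$ (both with exponents forced by scaling of $\mathbb{R}^3$), merged with the trivial bound $\|g\|_{L^1}^{2/7}\|\nabla^2 g\|_{L^2}^{3/7}\le (\|g\|_{L^1}+\|\nabla^2 g\|_{L^2})^{5/7}$ to unify the $L^2$ and $\nabla$-$L^2$ contributions.

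The main step is to apply this pointwise to each dyadic piece $\mathcal{P}_k f$. Since $\mathcal{P}_k f$ is supported in $\{|v|\sim 2^k\}$, every weight $\lr{v}^s$ is essentially constant and equal to $2^{ks}$ on this support, so for any $g, p, s$ one has $\|\mathcal{P}_k g\|_{L^p_s}\sim 2^{ks}\|\mathcal{P}_k g\|_{L^p}$. Converting the $L^1$ factor to $L^1_{s_1}$ with $s_1=15/4+7m/2$ and the second factor to weighted $L^1_{-3/2}$, $L^2_{-3/2}$, the factors of $2^{k}$ cancel exactly because
\[
m-\tfrac{2}{7}\,s_1+\tfrac{5}{7}\cdot\tfrac{3}{2}= m-\bigl(\tfrac{15}{14}+m\bigr)+\tfrac{15}{14}=0,
\]
yielding the per-scale bound
\[
2^{km}\|\mathcal{P}_k f\|_{H^1}\le C\,\|\mathcal{P}_k f\|_{L^1_{s_1}}^{2/7}\bigl(\|\mathcal{P}_k f\|_{L^1_{-3/2}}+\|\nabla^2\mathcal{P}_k f\|_{L^2_{-3/2}}\bigr)^{5/7}.
\]
Squaring this, summing in $k$ via Proposition \ref{baslem3}, and applying H\"older with conjugate exponents $7/2$ and $7/5$, the two resulting $\ell^2$ sums collapse into global weighted norms: $\sum_k\|\mathcal{P}_k f\|_{L^1_{s_1}}^2\le(\sum_k\|\mathcal{P}_k f\|_{L^1_{s_1}})^2\lesssim \|f\|_{L^1_{s_1}}^2$ by the triangle inequality with finite overlap, and $\sum_k\|\nabla^2\mathcal{P}_k f\|_{L^2_{-3/2}}^2\lesssim \|\nabla^2 f\|_{L^2_{-3/2}}^2$ by the $L^2$ almost-orthogonality of the dyadic supports.

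The hard part will be the commutator between $\nabla^2$ and the multiplication operator $\mathcal{P}_k$: writing
\[
\nabla^2(\mathcal{P}_k f)=\mathcal{P}_k(\nabla^2 f)+2\cdot 2^{-k}\widetilde{\mathcal{P}}_k^{(1)}(\nabla f)+2^{-2k}\widetilde{\mathcal{P}}_k^{(2)}(f),
\]
with the tilded cutoffs still supported in $\{|v|\sim 2^k\}$, the two remainders translate (via $2^{-k}\sim \lr{v}^{-1}$ on the support) into weighted $L^2_{-5/2}$ and $L^2_{-7/2}$ norms of $\nabla f$ and $f$; these are strictly lower-order than $\|\nabla^2 f\|_{L^2_{-3/2}}$ and can be absorbed either by a small fraction of $\|f\|_{H^1_m}$ on the left-hand side via Young's inequality with exponents $2$ and $10/7$ (using $10/7<2$), or by a further unweighted interpolation at lower order. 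The second inequality follows from exactly the same scheme, with the replacements $-3/2\mapsto -1/2$ and $15/4+7m/2\mapsto 5/4+7m/2$: the identity $m-\tfrac{2}{7}(5/4+7m/2)+\tfrac{5}{7}\cdot\tfrac{1}{2}=0$ shows that the $2^k$-cancellation again takes place exactly, so the argument goes through verbatim.
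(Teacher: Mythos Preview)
Your approach matches the paper's: prove the unweighted base case, apply it to each dyadic piece $\mathcal{P}_k f$, then H\"older with exponents $7/2$ and $7/5$ and recombine via Proposition~\ref{baslem3}. (The paper obtains the unweighted inequality by a Fourier cutoff at radius $R$ and optimization rather than Gagliardo--Nirenberg, but your argument is equally valid.) You are in fact more explicit than the paper about the commutator $[\nabla^2,\mathcal{P}_k]$, which the paper simply passes over.

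There is, however, a small gap in your first proposed treatment of the commutator. Absorbing $\|\nabla f\|_{L^2_{-5/2}}+\|f\|_{L^2_{-7/2}}$ into the left-hand side $\|f\|_{H^1_m}$ via Young's inequality leaves a residual term $C\|f\|_{L^1_{s_1}}^2$, and for $s_1=15/4+7m/2>-3/2$ this is \emph{not} controlled by $\|f\|_{L^1_{s_1}}^{4/7}B_0^{10/7}$ with $B_0:=\|f\|_{L^1_{-3/2}}+\|\nabla^2 f\|_{L^2_{-3/2}}$. The correct route is rather your second (vaguely stated) option: bound the commutator terms directly by $CB_0$. Setting $F:=f\lr{\cdot}^{-3/2}$, one has $\|f\|_{L^2_{-7/2}}\le\|F\|_{L^2}$ and $\|\nabla f\|_{L^2_{-5/2}}\le C(\|\nabla F\|_{L^2}+\|F\|_{L^2})$; the unweighted Gagliardo--Nirenberg inequalities give
\[
\|F\|_{L^2}+\|\nabla F\|_{L^2}\le C\|F\|_{L^1}^{2/7}\|\nabla^2 F\|_{L^2}^{5/7}+C\|F\|_{L^1}^{4/7}\|\nabla^2 F\|_{L^2}^{3/7},
\]
and since $\|\nabla^2 F\|_{L^2}\le \|\nabla^2 f\|_{L^2_{-3/2}}+C(\|\nabla f\|_{L^2_{-5/2}}+\|f\|_{L^2_{-7/2}})$, a short bootstrap (Young on the self-referential terms, with exponents $7/5,7/2$ and $7/3,7/4$) yields $\|\nabla f\|_{L^2_{-5/2}}+\|f\|_{L^2_{-7/2}}\le CB_0$. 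With this in hand the rest of your argument goes through.
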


\begin{proof}  We first claim that
\beno \|f\|_{H^1}\le C\, \|f\|_{L^1}^{\f27}(\|f\|_{L^1}+\|\na^2f\|_{L^2})^{\f57}.\eeno
Indeed, since $\|f\|_{H^1}^2\sim \int_{\R^3} (1+|\xi|)^2\hat{f}(\xi)^2d\xi$, then (for $R\ge 1$)
\beno  \|f\|_{H^1}^2\le C\, ( R^5\|f\|_{L^1}^2+R^{-2}\|f\|_{H^2}^2).\eeno 
We conclude by taking $R^7=\|f\|_{H^2}^2/\|f\|_{L^1}^2+1$, recalling that  $\|f\|_{H^2}\sim \|f\|_{L^1}+\|\na^2f\|_{L^2}$. Thanks to Proposition
 \ref{baslem3}, we see that
\beno \|f\|_{H^1_m}^2&\sim& \sum_{k=-1}^\infty 2^{2km}\|\mathcal{P}_{k}f\|_{H^{1}}^2 \le C \sum_{k=-1}^\infty 2^{2km}\|\mathcal{P}_{k}f\|_{L^1}^{\f47}(\|\mathcal{P}_{k}f\|_{ \dot{H}^2}+\|\mathcal{P}_{k}f\|_{L^1})^{\f{10}7}\\
&\le&C \bigg(\sum_{k=-1}^\infty 2^{(7m+15/2)k}\|\mathcal{P}_{k}f\|_{L^1}^2\bigg)^{\f27}\bigg(\sum_{k=-1}^\infty 2^{-3k}(\|\mathcal{P}_{k}f\|^2_{\dot{H}^2}+\|\mathcal{P}_{k}f\|_{L^1}^2)\bigg)^{\f57}\\
&\le& C\|f\|_{L^1_{15/4+7m/2}}^{\f47}(\|f\|_{L^1_{-3/2}}+\|\na^2 f\|_{L^2_{-3/2}})^{\f{10}7}.\eeno 
The proof of the second inequality is similar.
\end{proof}

  \begin{prop}[\cite{HLP}]\label{logineqprop} For $a,b\ge 0$ and $1<p<\infty$, the following inequality holds:
\ben\label{logineq} |a\log a-b\log b|\le C_p\, |a-b|^{1/p}+|a-b|\log^{+}(|a-b|)+2\sqrt{a\wedge b}\sqrt{|a-b|},
\een where
$a\wedge b=\min\{a,b\},\; C_p := \f{p}{e(p-1)}$ and 
\[
\log^+ |x|=\left\{
\begin{array}{ll}
\log x & {\rm if}\;\; x\geq 1\\
0 & {\rm if}\;\; x<1.
\end{array}
\right.
\]
\end{prop}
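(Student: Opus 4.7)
My plan is to reduce by symmetry to $a \ge b \ge 0$, set $c := a - b$, set aside the trivial case $c = 0$ and the degenerate case $b = 0$, and base the main argument on the exact algebraic identity
\[
a\log a - b\log b \;=\; c\log c \;+\; b\log\!\Big(1 + \tfrac{c}{b}\Big) \;+\; c\log\!\Big(1 + \tfrac{b}{c}\Big),
\]
obtained by expanding $(b+c)\log(b+c)$ twice via $\log(xy) = \log x + \log y$. The two mixed terms on the right are nonnegative, so writing $I := a\log a - b\log b$ one immediately has the lower bound $I \ge c\log c$.

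Two elementary one-variable estimates will then close the argument. First, $\log(1+v) \le \sqrt{v}$ for $v \ge 0$: the difference $\sqrt{v} - \log(1+v)$ vanishes at $v = 0$ and has derivative $\tfrac{1}{2\sqrt{v}} - \tfrac{1}{1+v} \ge 0$, since $1 + v \ge 2\sqrt{v}$. Applied with $v = c/b$ and $v = b/c$ respectively, it gives $b \log(1+c/b) \le \sqrt{bc}$ and $c \log(1+b/c) \le \sqrt{bc}$, so the identity yields the upper bound $I \le c \log c + 2\sqrt{bc}$. Second, $\log y \le (\alpha e)^{-1} y^\alpha$ for all $y \ge 1$ and $\alpha \in (0,1)$, obtained by maximizing $y \mapsto \log y / y^\alpha$ at $y = e^{1/\alpha}$ (where the maximum equals $1/(\alpha e)$); the choice $\alpha = (p-1)/p$ produces exactly the constant $C_p = p/(e(p-1))$.

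The conclusion is a short case split on the size of $c$. If $c \ge 1$, then $c \log c = c\log^+\!c \ge 0$, so $0 \le I \le c\log^+\!c + 2\sqrt{bc}$ directly. If $c < 1$, then $c\log^+\!c = 0$ and $c\log c \le 0$, so the sandwich $c\log c \le I \le c\log c + 2\sqrt{bc}$ gives $|I| \le c\log(1/c) + 2\sqrt{bc}$, and applying the second estimate with $y = 1/c \ge 1$ bounds $c\log(1/c) \le C_p c^{1/p}$. The residual case $b = 0$ amounts to $|a\log a| \le C_p a^{1/p} + a\log^+\!a$, which again follows from the second estimate (applied with $y = 1/a$ when $a \le 1$, trivial otherwise). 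In every case the claimed bound $|a\log a - b\log b| \le C_p |a-b|^{1/p} + |a-b|\log^+\!|a-b| + 2\sqrt{(a\wedge b)|a-b|}$ follows. The main obstacle I expect is spotting the decomposition in the first paragraph; once that identity is in hand the proof reduces to two routine calculus facts, and as a sanity check the constant $C_p$ is sharp, with equality approached in the configuration $b = 0$, $a = e^{-p/(p-1)}$.
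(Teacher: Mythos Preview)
Your proof is correct and follows essentially the same route as the paper: both arguments rest on the two elementary estimates $\log(1+v)\le\sqrt{v}$ and $|\log x|\le (e\alpha)^{-1}x^{-\alpha}$ for $0<x\le 1$, and your exact identity $a\log a-b\log b=c\log c+b\log(1+c/b)+c\log(1+b/c)$ is just a one-step refinement of the paper's decomposition $|a\log a-b\log b|\le(a-b)|\log a|+b\log(a/b)$ (obtained by further writing $\log a=\log c+\log(1+b/c)$). The only cosmetic difference is that you work with the exact identity and a two-sided sandwich, whereas the paper applies the triangle inequality at the first step; both lead to the same three terms with the same constants.
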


\begin{proof} We first observe that the following inequalities hold:
\begin{equation}\label{ll}
 \log (1+x)\le \sqrt{x},\quad x\ge 0, \qquad
 |\log x|\le \f{1}{e\alpha}\, x^{-\alpha},\quad 0< x\le 1, \alpha>0 .
\end{equation} 


Then, let $q>1$ satisfy $1/p+1/q=1$.
In what follows, we  assume that $a>b>0$. 

We first observe that
$$ |a\log a-b\log b|
\le (a-b)|\log a|+b\log\Big(\f{a}{b}\Big) . $$
Using estimate \eqref{ll}, we see that
\beno&&  b\log\Big(\f{a}{b}\Big)=b\log\Big(1+\f{a-b}{b}\Big)
\le b\sqrt{\f{a-b}{b}}=\sqrt{b}\sqrt{a-b}.\eeno
Next we compute
$$
|\log a| =\Big|\log\Big((a-b)\Big(1+\f{b}{a-b}\Big)\Big)\Big|
\le\f{q}{e}(a-b)^{-1/q}+\log^{+}(a-b)+\sqrt{\f{b}{a-b}}, $$
where in the case when $a-b\le 1$, we use estimate
\eqref{ll} with $\alpha=1/q$.
This gives
$$(a-b)|\log a|\le \f{q}{e}(a-b)^{1/p}+(a-b)\log^{+}(a-b)+\sqrt{b}\sqrt{a-b},
$$
which enables to conclude.
\end{proof}

\subsection{A remark on initial data}

Finally we show that there exist initial data for Theorem~\ref{maintheorem1} whose initial relative entropy $H(0)$ is not big, while 
their $\dot{H}^{1}$ norm is large. 
 See also the last comment of Theorem~\ref{maintheorem1} in the introduction. 
\begin{prop}\label{example} Let $\epsilon,\eta\ll1$ and $\eta:=\epsilon^{11/9}$. 
Assume the Maxwellian $\mu$ and a smooth $\phi_0\geq 0$ both 
satisfy the normalization \eqref{f0}.  Then
\ben\label{Speexample2} \qquad f_0(v) := (1-\eta+\eta\epsilon^{2})^{\f32}\bigg[(1-\eta)\mu\big((1-\eta+\eta\epsilon^{2})^{\f12}v\big)+\eta\epsilon^{-3}\phi_0\big(\epsilon^{-1}(1-\eta+\eta\epsilon^{2})^{\f12}v\big)\bigg]\een
 also satisfies the normalization \eqref{f0},  and
\[
 \mathcal{M}(0) := H(0)- \f52\big(\|h(0)\|_{\dot{H}^1}^2+ B\, \big)^{-\f25}\leq 0,
\]
while $\|h_0\|_{\dot{H}^{\f12}}\sim \epsilon^{-\f{7}{9}}$ (where $h_0 = f_0 - \mu$, and $H(0)$ is the relative entropy of $f_0$).
\end{prop}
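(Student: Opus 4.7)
The plan is to verify separately the normalization \eqref{f0}, the bound $\|h_0\|_{\dot H^{1/2}} \sim \epsilon^{-7/9}$, and the inequality $\mathcal{M}(0) \le 0$; the choice $\eta = \epsilon^{11/9}$ is tuned so that $H(0) = O(\eta|\log \epsilon|)$ is dominated by $(\|h_0\|_{\dot H^1}^2 + B)^{-2/5} \sim \epsilon^{46/45}$. The normalization reduces to a direct change-of-variables computation with $\lambda := (1-\eta+\eta\epsilon^2)^{1/2}$: the two summands in the bracket, once multiplied by $\lambda^3$, have masses $1-\eta$ and $\eta$, vanishing first moments (by the centering of $\mu$ and $\phi_0$), and second moments $3(1-\eta)/\lambda^2$ and $3\eta\epsilon^2/\lambda^2$, which sum to $3$.

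For the Sobolev seminorms, I Fourier-transform $h_0$ to obtain
\[
\widehat{h_0}(\xi) = \bigl[(1-\eta)\widehat{\mu}(\xi/\lambda) - \widehat{\mu}(\xi)\bigr] + \eta\,\widehat{\phi_0}(\epsilon\xi/\lambda).
\]
The bracketed ``bulk'' term vanishes at $\eta = 0$ and obeys the pointwise estimate $|[\cdots]| \le C\eta\,(1+|\xi|^2)\,e^{-|\xi|^2/2}$, so its $\dot H^s$ norm is $O(\eta)$ uniformly in $\epsilon$. For the oscillating term, the substitution $\zeta = \epsilon\xi/\lambda$ yields
\[
\int |\xi|^{2s}\,\eta^2\,|\widehat{\phi_0}(\epsilon\xi/\lambda)|^2\,d\xi = \eta^2\,(\lambda/\epsilon)^{2s+3}\,\|\phi_0\|_{\dot H^s}^2,
\]
so that $\|h_0\|_{\dot H^s}^2 \sim \eta^2\epsilon^{-(2s+3)}$ for $s \in \{1/2, 1\}$ (the cross term being controlled by Cauchy--Schwarz and hence negligible for small $\epsilon$). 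With $\eta = \epsilon^{11/9}$ this gives $\|h_0\|_{\dot H^{1/2}} \sim \epsilon^{-7/9}$ and $(\|h_0\|_{\dot H^1}^2 + B)^{-2/5} \sim \epsilon^{46/45}$.

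Since $f_0$ and $\mu$ share mass and kinetic energy, $H(0) = \int f_0 \log f_0\,dv - \int \mu\log \mu\,dv$. I write $f_0$ as the convex combination $(1-\eta)\mu_\lambda + \eta\,\phi_0^{\epsilon,\lambda}$ of the probability densities $\mu_\lambda(v) := \lambda^3\mu(\lambda v)$ and $\phi_0^{\epsilon,\lambda}(v) := (\lambda/\epsilon)^3\phi_0((\lambda/\epsilon) v)$, so that by convexity of $x\log x$,
\[
\int f_0 \log f_0\,dv \le (1-\eta)\!\int\! \mu_\lambda \log \mu_\lambda\,dv + \eta\!\int\! \phi_0^{\epsilon,\lambda}\log \phi_0^{\epsilon,\lambda}\,dv.
\]
Explicit computation yields $\int\mu_\lambda\log\mu_\lambda\,dv = \int\mu\log\mu\,dv + 3\log\lambda$ and $\int\phi_0^{\epsilon,\lambda}\log\phi_0^{\epsilon,\lambda}\,dv = \int\phi_0\log\phi_0\,dw + 3\log(\lambda/\epsilon)$. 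Since $\log\lambda = O(\eta)$, one obtains $H(0) \le C\,\eta\,(1 + |\log\epsilon|) \le C\,\epsilon^{11/9}|\log\epsilon|$.

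Finally, since $11/9 - 46/45 = 1/5 > 0$, one has $\epsilon^{11/9}|\log\epsilon| = o(\epsilon^{46/45})$ as $\epsilon \to 0$, and hence $\mathcal{M}(0) \le 0$ for $\epsilon$ small enough. The decisive step is the entropy bound: a priori the pointwise spike of $\phi_0^{\epsilon,\lambda}$ (of size $\epsilon^{-3}$) could produce a polynomial-in-$\epsilon^{-1}$ contribution to $H(0)$, but the convexity estimate confines the cost to logarithmic growth $\eta|\log\epsilon|$, which is precisely what allows $H(0)$ to remain small while $\|h_0\|_{\dot H^1}$ becomes arbitrarily large.
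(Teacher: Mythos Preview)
Your proof is correct and reaches the same conclusions as the paper, but the route you take for the relative-entropy bound is genuinely different and arguably cleaner. The paper writes $H(0)=\int(\frac{f_0}{\mu}\log\frac{f_0}{\mu}-\frac{f_0}{\mu}+1)\mu\,dv$, applies the mean-value form $\int_0^1|\log(f_\theta/\mu)|\,|f_0-\mu|\,d\theta\,dv$ with $f_\theta=(1-\theta)f_0+\theta\mu$, and then bounds $|\log(f_\theta/\mu)|$ pointwise by $C(|v|^2+\log(\eta\epsilon^{-3}))$ on $\{f_0\ge\mu\}$ and by $C(|v|^2+1)$ on $\{f_0\le\mu\}$, arriving at $H(0)\le C(1+\log(\eta\epsilon^{-3}))\|f_0-\mu\|_{L^1_2}\le C\eta(1+|\log\epsilon|)$. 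Your convexity argument---writing $f_0=(1-\eta)\mu_\lambda+\eta\phi_0^{\epsilon,\lambda}$ and invoking $x\log x$ convexity together with the exact scaling law $\int g_\sigma\log g_\sigma=\int g\log g+3\log\sigma$---bypasses the pointwise case analysis entirely and makes transparent why the spike costs only $\eta|\log\epsilon|$ in entropy. The paper's method has the mild advantage of also yielding the intermediate $L^1_2$ control of $h_0$, but yours is more self-contained. For the $\dot H^s$ norms, the paper simply asserts $\|h_0\|_{\dot H^s}\sim\eta\epsilon^{-s-3/2}$ after noting $\eta\epsilon^{-3/2}\ge 1$; your Fourier-side splitting into bulk $O(\eta)$ plus oscillating $\sim\eta\epsilon^{-s-3/2}$ terms gives the same scaling with an explicit justification.
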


\begin{proof}
 We check that $f_0$ satisfies the third condition of normalization~\eqref{f0} 
since the other two are easier to check. Thanks to a change of variables,
\[
\begin{split}
&(1-\eta+\eta\epsilon^{2})^{\f32}\bigg[(1-\eta)\int \mu\big((1-\eta+\eta\epsilon^{2})^{\f12}v\big)|v|^2dv+\eta\epsilon^{-3}\int \phi_0\big(\epsilon^{-1}(1-\eta+\eta\epsilon^{2})^{\f12}v\big)|v|^2dv
\bigg]\\
&=\frac{1-\eta}{1-\eta+\eta\epsilon^{2}}\int\mu(v)|v|^2dv + 
\frac{\eta\epsilon^2}{1-\eta+\eta\epsilon^{2}} \int \phi_0(v)|v|^2dv=3.
\end{split}
\] 

Next let us estimate $\mathcal{M}(0)$. We first observe that $\eta\epsilon^{-\f32}\ge1$. Then for   any $s>0$, $ \|h_0\|_{\dot{H}^{s}}\sim \eta\epsilon^{-s-\f32}$.
The relative entropy $H(0)$ is bounded from above by 
\beno  H(0)&=&\int_{\R^3} \bigg(\f{f_0}{\mu}\log(\f{f_0}{\mu})-\f{f_0}{\mu}+1\bigg)\,\mu \,dv\\
&\le& \int_{\R^3}\int_0^1 \bigg|\log(\f{f_\theta}{\mu}) \bigg| \, \bigg|\f{f_0}{\mu}-1 \bigg|\,\mu\, d\theta dv, \eeno
 for some $\theta \in [0,1]$, with the notation $f_\theta=(1-\theta) f_0+\theta \mu$. We from now on denote by $C$ any strictly positive constant.
\par
 At points where $f_0\ge \mu$, we see that
 \beno |\log(\f{f_\theta}{\mu})|= \log(\f{f_\theta}{\mu})\le C(|v|^2+\log(\eta\epsilon^{-3})),\eeno 
while at points where $f_0\le \mu$, 
 \beno |\log(\f{f_\theta}{\mu})|=  \log(\f{\mu}{f_\theta})\le  \log(\f{\mu}{f_0}) \le C(|v|^2+1). \eeno 
From these estimates, we deduce that
\beno H(0)\le C(1+\log(\eta\epsilon^{-3}))\|f_0-\mu\|_{L^1_2}\le C\, \eta\, (1+\log(\eta\epsilon^{-3})). \eeno
Thus,
\beno \mathcal{M}(0)\le C\eta (1+\log(\eta\epsilon^{-3}))-C\eta^{-\f45}\epsilon^{2}.\eeno
Remembering that $\eta\sim \epsilon^{11/9}$, and $\epsilon\ll1$, we see that
\beno \mathcal{M}(0)\le C\epsilon^{11/9}\log(\epsilon^{-1}) -C \epsilon^{ 46/45}\le 0. \eeno
Finally,  $\|h_0\|_{\dot{H}^{\f12}}\sim \epsilon^{-\f{7}{9}}$,
so that $h_0$ is a large initial  datum for Landau equation in $\dot{H}^{\f12}$ (the critical space for incompressible Navier-Stokes equations).
\end{proof}

\section{Acknowledgement}
The research of L.-B. He was supported by NSF of China under Grant No.11771236.
The research of J.-C. Jiang was supported in part by National Sci-Tech Grant MOST 107-2115-M-007-002-MY2
and MOST 109-2115-M-007-002-MY3.

\end{document}